\documentclass[final]{siamltex}

\usepackage{graphicx}
\usepackage{amsmath}
\usepackage{algorithm}
\usepackage{algorithmicx}
\usepackage[noend]{algpseudocode}
\usepackage{color}
\usepackage{bm}
\usepackage{amsfonts}
\usepackage{float}
\usepackage{subfig}
\usepackage{epstopdf}
\usepackage{multirow}
\usepackage{tablefootnote}
\usepackage{threeparttable}
\usepackage{makecell}

\newcommand{\R}{\mathbb{R}}
\newcommand{\tr}{^{\sf T}}
\newcommand{\m}[1]{{\bf{#1}}}

\newtheorem{remark}{Remark}[section]
\newtheorem{assum}{Assumption}
\newtheorem{defin}{Definition}

\title{An Adaptive Decentralized Quasi-Newton Method with Stepsizes Independent of the Local-Update Budget
\thanks{This work was supported by the National Natural Science Foundation of China (Grant Number 11971231).
}}
\author{
	Hao Wu and
	Liping Wang\thanks{School of Mathematics, Nanjing University of Aeronautics and Astronautics, Nanjing, China. Email: {\tt wuhoo104@nuaa.edu.cn; wlpmath@nuaa.edu.cn}}
}

\begin{document}
\maketitle
\begin{abstract}
This paper proposes a novel Adaptive Decentralized Quasi-Newton (AdaDQN) method for solving smooth nonconvex optimization problems over undirected networks. While standard decentralized algorithms with multiple fixed local updates typically admit a convergence stepsize inversely proportional to the number of local updates, we show that this scaling is worst-case tight for the typical unscaled fixed-local-update scheme. We revisit a class of gradient-tracking methods with fixed local updates from a surrogate-based perspective and establish a Robust Inexact Algorithm (RIA) framework. Inspired by this framework, AdaDQN integrates a safeguarded consensus-aware termination criterion, a standard event-triggered communication protocol, and a scalable memoryless BFGS update. We establish an $\mathcal{O}(1/T)$ best-iterate rate for first-order stationarity. For a squared stationarity tolerance $\delta$, the guaranteed gradient complexity of the fixed scheme is $\mathcal{O}(nK_g/\delta)$, whereas AdaDQN attains $\mathcal{O}(n/\delta+n\alpha\tilde\varepsilon^{-2})$, independent of the maximum local-update budget. Numerical experiments demonstrate that AdaDQN achieves a superior computation-communication tradeoff, outperforming state-of-the-art decentralized methods across various performance metrics.

\end{abstract}
\begin{keywords}
	Decentralized optimization, nonconvex optimization, computation-communication tradeoff, local updates, event triggering
\end{keywords}

\begin{AMS}
90C26, 90C53, 65K05, 68W15, 90C30
\end{AMS}

\pagestyle{myheadings}
\thispagestyle{plain}
\section{Introduction}
Decentralized optimization involves a network of interconnected nodes aiming to collaboratively solve a global optimization problem. The network is modeled as a graph $\mathcal{G}=\left(\mathcal{V},\mathcal{E}\right)$, where $\mathcal{V} = \{1,\ldots,n\}$ is the set of nodes, and $\mathcal{E} \subseteq  \mathcal{V} \times \mathcal{V}$ is the set of communication links between nodes. Each node $i \in \mathcal{V}$ possesses its own local cost function $f_i$ and the collective goal is to find a consensual solution that tackles the consensus optimization problem:
\begin{equation}\label{obj_fun1}
	\mathop {\min } \limits_{\m{z} \in {\R^p}} \; F(\m{z}):= \frac{1}{n}\sum\limits_{i = 1}^n {{f_i}(\m{z})},
\end{equation}	
where $f_i:\R^{p} \rightarrow \R$ is a continuously differentiable and possibly nonconvex objective function of the $i$-th node. 
This problem formulation motivates a decentralized and parallelizable approach to finding consensus solutions as each node only has access to its own local function and coordination is achieved solely through communication with its immediate neighbors in the network. Using a fully decentralized system breaks through the centralized system's bottleneck due to single point of failure, bandwidth limit, latency requirement and large communication overhead. Thus, decentralized optimization has wide applications, including but not limited to:  decentralized resources control \cite{fusco2021decentralized}, wireless networks \cite{jeong2022asynchronous}, decentralized machine learning \cite{zhang2022distributed}, power systems \cite{chen2020fully}, federated learning \cite{pillutla2022robust}.

An efficient decentralized method is highly
needed such that the practical tasks can be performed efficiently by using computational nodes across the network.
Well-known methods include Decentralized Gradient Descent (DGD)  \cite{nedic2009distributed,yuan2016convergence,zeng2018nonconvex}, Gradient Tracking (GT) \cite{xu2015augmented,qu2017harnessing,nedic2017achieving,nedic2017geometrically,xin2019distributed,gao2022achieving,zhang2020distributed,song2024optimal}, Exact Diffusion (ED) $\setminus$ EXTRA \cite{shi2015extra,yuan2018exact,alghunaim2024local,wu2025music}, and primal-dual methods \cite{shi2014linear,jakovetic2020primal,mancino2023decentralized}.

Classic methods like DGD, GT, and ED typically decompose iterations into a local update step and a communication step. However, the cost of these two steps varies significantly across applications and hardware environments. Let us examine the following scenarios \cite{berahas2018balancing}.\\
\noindent \textbf{Scenario 1 (Communication-Limited):} Control a swarm of battery-powered robots equipped with low-energy computation modules, interconnected via an energy-intensive communication protocol. Given the robots' constrained energy capacity, communication processes can be particularly costly, whereas extended task completion durations may not pose significant issues. \\
\noindent \textbf{Scenario 2 (Computation-Limited):} In a large-scale machine learning problem utilizing a computer cluster with either physical interconnections or shared memory architecture, communication overhead becomes negligible, whereas computational expense may become substantial, primarily determined by the scale of the dataset on each individual machine.

Therefore, for adapting to different application environments, customized algorithms are proposed and endowed with flexibility in terms of communication, computation, or a hybrid of both. 

In communication-limited scenarios, to mitigate the high communication burden, a direct and efficient idea is to reduce the number of communications. Most commonly used approaches include {\it local update} and {\it event triggering}. The former means that multiple local updates are allowed between two consecutive communication rounds. Such communication-centric methods include GT-based methods  \cite{ge2023gradient,nguyen2023performance,liu2024decentralized} and ED-based methods \cite{alghunaim2024local,wu2025music,liu2025guaranteeing}. 
{\it Event triggering}, another above-mentioned approach to reduce the number of communications, is to exchange information only when certain conditions related to the algorithm iterates are met during algorithm execution. The triggering event can be devised so that the information is exchanged only when necessary. This can potentially reduce the communication cost without degrading the algorithm performance much.  
The event-triggered strategy has been extensively adopted to enhance communication efficiency in decentralized optimization, including its application to decentralized stochastic gradient descent \cite{george2020distributed}, gradient tracking methods under both convex \cite{gao2021event} and nonconvex \cite{liu2023event,gao2024distributed} settings, decentralized Adam \cite{okamoto2023distributed}, the continuous-time decentralized primal-dual method \cite{xu2023distributed}, as well as decentralized ADMM variants \cite{liu2019communication,li2019communication,liu2021dqc,zhang2023decentralized}. 
On the other hand, computation-centric methods \cite{berahas2018balancing,mansoori2019general} perform multiple communication steps between two consecutive computation steps (e.g. gradient evaluation). In a primal-dual framework \cite{mansoori2019general}, each node performs a predetermined number of primal updates where this node repeatedly communicates with its neighbors without recomputing its gradient.

More practical consideration lies in balancing communication and computation within an algorithmic framework. Related works include \cite{berahas2021convergence,liu2022decentralized,huang2023computation,berahas2024balancing,huang2025pareto}, where multiple communication and local update steps are performed at each iteration. Specifically, such strategy was incorporated in DGD for strongly convex setting \cite{berahas2021convergence} and stochastic nonconvex setting \cite{liu2022decentralized} while applied to GT for strongly convex setting \cite{huang2023computation,berahas2024balancing} and stochastic nonconvex setting \cite{huang2025pareto}. We summarize and compare the most relevant algorithms in Table \ref{Comparisons}. 
The communication and computation patterns of decentralized algorithms in the aforementioned works \cite{berahas2021convergence,liu2022decentralized,huang2023computation,berahas2024balancing,huang2025pareto}
follow some predefined rules. Specifically, these algorithms require the predetermined number of communication and local update steps per iteration, uniformly applied across all nodes. We argue that this approach imposes significant limitations. \\
\noindent \textbf{Theoretical limitations:} The worst-case analysis of these local update methods typically forces the convergence stepsize to be inversely proportional to the number of local steps, denoted as $K_g$. This theoretical restriction implies that any computational gain from increasing $K_g$ is canceled out by a shrinking stepsize. However, practical empirical studies contradict this limitation, showing that multiplying $K_g$ by a certain factor does not actually require a corresponding proportional reduction in the stepsize to ensure convergence. This gap indicates that fixing the local update steps is overly conservative.\\
\noindent \textbf{Practical limitations:} Predetermined computation and communication schedules may be inefficient because the local optimization progress and the variation of communicated variables evolve across nodes and iterations. This motivates an adaptive framework that determines local update and broadcast using locally available algorithmic information. 

	\begin{table}
	\centering
	\caption{Comparisons with existing algorithms}\label{Comparisons}
	\begin{threeparttable}
		\begin{tabular}{cccccc}
			\hline
			\textbf{Alg.}&\textbf{Objective} &\textbf{Comm.}&\textbf{Comp.} &\textbf{Order} & \textbf{Oracle}\\
			\hline
			GT\cite{lu2019gnsd} &NC \tnote{1} &Single \tnote{2}&Single &First &SG \tnote{3}\\
			LU-GT \cite{nguyen2023performance} &NC &Single&Multi. \tnote{2} &First &FG \tnote{3}\\
			LSGT\cite{ge2023gradient} &NC &Single&Multi. &First &SG\\
			$\mbox{DGD}^{t}$ \cite{berahas2018balancing} &SC \tnote{1} &Multi.&Single &First &FG\\
			DETSGRAD \cite{george2020distributed}&NC &Adapt. \tnote{2}&Single &First & SG\\
			ET-DOGT \cite{liu2023event}&NC &Adapt.&Single &First & FG\\
			DETNO \cite{gao2024distributed}&NC+PL\tnote{1} &Adapt.&Single &First & FG\\
			NEAR-DGD \cite{berahas2021convergence}&SC &Multi.&Multi. &First & FG\\
			DFL \cite{liu2022decentralized}&NC &Multi.&Multi. &First & SG\\
			GTA \cite{berahas2024balancing}&SC &Multi.&Multi. &First & FG\\
			FlexGT \cite{huang2025pareto}&NC &Multi.&Multi. &First & SG\\
			\hline
			\multirow{2}{*}{AdaDQN (Ours)} &\multirow{2}{*}{NC} &\multirow{2}{*}{\shortstack{Multi.\\+Adapt.}}&\multirow{2}{*}{\shortstack{Multi.\\+Adapt. }}&\multirow{2}{*}{\shortstack{Quasi \\ Second \tnote{4}}} & \multirow{2}{*}{FG}\\
			& & & & &\\
			\hline 
		\end{tabular}
		\begin{tablenotes}
			\item [1] ``NC'',``SC'', and ``PL'' respectively mean ``Nonconvex'', ``Strongly convex'', and ``Polyak-{\L}ojasiewicz''.
			\item [2] ``Single'', ``Multi.'', and ``Adapt.'' have different meanings across columns. In the Comm. column, they indicate a single communication round, multiple communication rounds, and adaptive broadcasts, respectively; in the Comp. column, they indicate a single, multiple, and adaptive number of local computation steps, respectively.
			\item[3] ``SG'' and ``FG'' mean that the algorithm has access to stochastic gradients or full gradients.
			\item[4] ``Quasi second" means the second-order information is captured by Hessian approximations using gradient information. 
		\end{tablenotes}
	\end{threeparttable}
\end{table}

Our main contributions can be summarized as follows. 
\begin{itemize}
	\item [1.] We revisit the fixed-local-update gradient-tracking schemes examined in this paper through locally constructed surrogate models and establish a Robust Inexact Algorithm (RIA) framework, under which their local update recursions satisfy a common surrogate-based representation. Our analysis demystifies why these schemes work by demonstrating that their convergence is driven fundamentally by two regularity conditions, specifically sufficient descent and bounded deviation. It also shows that, for the unscaled fixed-local-update gradient-tracking scheme, the $\mathcal{O}(1/K_g)$ sufficient stepsize scaling is worst-case tight up to constant factors.
	\item [2.] We propose an Adaptive Decentralized Quasi-Newton (AdaDQN) method, which achieves double-level adaptivity through adaptive local updates and adaptive broadcasts based on locally available algorithmic states. Specifically, we introduce a safeguarded consensus-aware termination criterion for the local update loop, which applies the bounded deviation condition as a safeguard and automatically terminates the local updates when the consensus error dominates the local optimization gain. We further employ a standard event-triggered communication protocol that broadcasts variables only when the local state evolves significantly.
	\item[3.]  We establish that the first-order stationarity residual of AdaDQN converges to zero with an $\mathcal{O}(1/T)$ best-iterate rate under appropriate stepsize choices. More importantly, we prove that the convergence stepsize is decoupled from the reciprocal of the maximum number of local updates. For a squared stationarity tolerance $\delta$, the fixed-local-update scheme requires $\mathcal{O}(nK_g/\delta)$ gradient evaluations, while AdaDQN admits a $K_g$-independent bound $\mathcal{O}(n/\delta+n\alpha\tilde\varepsilon^{-2})$. Numerical results on various datasets confirm the efficacy of AdaDQN in saving both communication and computation, and demonstrate that our method achieves superior performance in terms of iteration count, communication cost, and gradient evaluations compared to state-of-the-art relative decentralized algorithms.
\end{itemize}
The rest of this paper is organized as follows. Section 2 formally introduces the problem setting, outlining the necessary assumptions on the objective functions and the network topology. Section 3 presents a surrogate-based perspective for gradient-tracking methods with multiple local updates, establishes the RIA framework, and provides its convergence analysis. In Section 4, we detail the development of the proposed AdaDQN method and establish its convergence guarantees. Numerical experiments of comparing our AdaDQN method with other well-established methods for solving decentralized nonconvex optimization are presented in Section 5.
We finally draw some conclusions in Section 6.

\subsection{Notation}
We use uppercase boldface letters, e.g.~$\m{W}$, for matrices and lowercase boldface letters,  e.g.~$\m{w}$, for vectors.
For any vectors $\m{v}_i \in \R^p$, $i=1,\ldots,n$, we define  $\bar{\m{v}}=\frac{1}{n} \sum_{i=1}^n \m{v}_i$ and $\m{v} = [\m{v}_1; \m{v}_2; \ldots, \m{v}_n] \in \R^{np}$.
Given an undirected network  $\mathcal{G}=\left(\mathcal{V},\mathcal{E}\right)$,
let $\m{x}_i$ denote the local copy of the global variable $\m{z}$ at node $i$ and $\mathcal{N}_i$ denote the set consisting of the neighbors of node $i$ 
(for convenience, we treat node $i$ itself as one of its neighbors). 
We define $f(\m{x}) = \sum_{i = 1}^n {{f_i}({\m{x}_i})}$ and use $\m{g}^t$, $\m{g}_i^t$ to stand for $\nabla f(\m{x}^t)$, $\nabla f_i(\m{x}^t_{i})$ respectively,
where, for clarification, the gradient of $f(\m{x})$ is defined as
$ \nabla f(\m{x}) =\left[ \nabla f_1(\m{x}_{1}); \nabla f_2(\m{x}_{2}); \ldots, \nabla f_n(\m{x}_{n}) \right] \in \R^{np}$. 
In addition,  we define $\overline{\nabla} f(\m{x}^t)=\frac{1}{n} \sum_{i=1}^n \nabla f_i(\m{x}_i^t) 
\in \R^p$.
We say that $\m{x}$ is consensual or gets consensus if ${\m{x}_1}={\m{x}_2}=\ldots={\m{x}_n}$. 
$\m{I}_p$ denotes the $p \times p$ identity matrix and $\m{I}$ denote $\m{I}_{np}$ for simplicity. Kronecker Product is denoted as $\otimes$.
Given a symmetric matrix $\m{N}$, $\m{N}\tr$  denotes transpose of $\m{N}$;
${\lambda _{\min }(\m{N})}$,  ${\lambda _{\max }(\m{N})}$, and $\rho(\m{N})$ denote smallest eigenvalue, largest eigenvalue, and  the spectral radius of $\m{N}$,  respectively.
For matrices $\m{N}_1$ and $\m{N}_2$ with same dimension, $\m{N}_1 \succeq \m{N}_2$ means $\m{N}_1 - \m{N}_2$ is positive semidefinite, while $\m{N}_1 \ge \m{N}_2$ means $\m{N}_1 - \m{N}_2$ is component-wise nonnegative.
We define $\m{M}=\frac{1}{n} \m{1}_n\m{1}_n\tr\otimes \m{I}_p$ where $\m{1}_n \in \R^n$ denotes the
vector with all components ones. 

\section{Problem setting}
The following are several necessary assumptions for the objective function.

\begin{assum}\label{as0}
	Every local objective function $f_i$ is proper (i.e., not everywhere infinite) and the global objective is bounded below (i.e.,	$F(\m{z})> \underline{F} :=\inf_{\m{z}}F(\m{z})>-\infty$ for any $\m{z} \in \mathbb{R}^p$).
\end{assum}

\begin{assum}\label{as1}
	Each local gradient $\nabla f_i$ is Lipschitz continuous with constant $L>0$, i.e.,
	\begin{equation}\label{3.1}
		\left\Vert {\nabla {f_i}(\m{z}) - \nabla {f_i}(\tilde{\m{z}})} \right\Vert \le L \left\Vert {\m{z} - \tilde{\m{z}}} \right\Vert,
	\end{equation}
	$\forall ~\m{z},\tilde{\m{z}} \in {\mathbb{R}^p},i \in \mathcal{V}$.
\end{assum}

\begin{assum}\label{as_network}
	The communication graph $\mathcal{G}=(\mathcal{V},\mathcal{E})$ is fixed, connected, and undirected.
\end{assum}

Under Assumption~\ref{as_network}, communication over $\mathcal{G}$ is parameterized by an admissible mixing matrix $\tilde{\m{W}}=[\tilde{W}_{ij}]\in\R^{n\times n}$ defined as follows.
\begin{defin}\label{mix}
	(Admissible mixing matrix $\tilde{\m{W}}$ for a given network $\mathcal{G}=\left(\mathcal{V},\mathcal{E}\right)$)
	\begin{itemize}
		\item [1.] $\tilde{\m{W}}$ is nonnegative and respects the graph topology, i.e., $\tilde{W}_{ij}>0$ if $j\in\mathcal{N}_i$ and $\tilde{W}_{ij}=0$ otherwise. In particular, $\tilde{W}_{ii}>0$ for every $i\in\mathcal{V}$.
		\item [2.] $\tilde{\m{W}}$ is symmetric and doubly stochastic, i.e., $\tilde{\m{W}}=\tilde{\m{W}}\tr$ and $\tilde{\m{W}}\m{1}_n=\m{1}_n$.\\
	\end{itemize}
\end{defin}
There are a few common choices for the mixing matrix $\tilde{\m{W}}$, such as the Laplacian-based constant edge weight matrix \cite{sayed2014diffusion} and
the Metropolis constant edge weight matrix \cite{xiao2007distributed}.
Let $\lambda_{i}(\tilde{\m{W}})$ denote the $i$-th largest eigenvalue of $\tilde{\m{W}}$ and $\sigma$ be the second largest magnitude eigenvalue of $\tilde{\m{W}}$. 
Then, the following properties hold.
\begin{lemma}\label{property W}
	For $\tilde{\m{W}}$ defined in Definition~\ref{mix} and  $\m{W} :=\tilde{\m{W}}\otimes\m{I}_p$, we have
	\begin{itemize}
		\item [1.] $1=\lambda_{1}(\tilde{\m{W}})>\lambda_{2}(\tilde{\m{W}})\geq\ldots\geq\lambda_{n}(\tilde{\m{W}})>-1$;
		\item [2.] $	0\leq\rho(\m{W}-\m{M})=\sigma=\max \left\{|\lambda_{2}(\tilde{\m{W}})|, |\lambda_{n}(\tilde{\m{W}})|\right\}<1$;
		\item [3.] $\m{M}=\m{M}\m{W}=\m{W}\m{M}$;
		\item [4.]  
		$\Vert\m{W}^{k}\m{x}-\m{M}\m{x}\Vert=\Vert(\m{W}^{k}-\m{M})(\m{x}-\m{M}\m{x})\Vert \leq \sigma^{k} \Vert\m{x}-\m{M}\m{x}\Vert$
		for any $\m{x} \in \mathbb{R}^{np}$ and $k \geq 1$. 
	\end{itemize}
\end{lemma}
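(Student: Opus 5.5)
The proof rests on standard spectral facts for symmetric, nonnegative, doubly stochastic matrices, lifted through the Kronecker product. The spectrum of $\m{W}=\tilde{\m{W}}\otimes\m{I}_p$ is that of $\tilde{\m{W}}$, each eigenvalue repeated $p$ times, and the eigenvectors are $\m{u}\otimes\m{e}_k$. So everything can be argued for $\tilde{\m{W}}$ and $\frac{1}{n}\m{1}_n\m{1}_n\tr$ and then transferred.

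\textbf{Item 1.} Since $\tilde{\m{W}}$ is symmetric, its eigenvalues are real. Stochasticity gives $\tilde{\m{W}}\m{1}_n=\m{1}_n$, and the row-sum (Gershgorin) bound gives $\rho(\tilde{\m{W}})\le\|\tilde{\m{W}}\|_\infty=1$, so $\lambda_1=1$ and all $|\lambda_i|\le 1$.

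To show $\lambda_2<1$, I would use the quadratic form identity
\[
\m{v}\tr(\m{I}_n-\tilde{\m{W}})\m{v}=\tfrac12\sum_{i,j}\tilde W_{ij}(v_i-v_j)^2 .
\]
This vanishes only if $v_i=v_j$ for every edge, and connectivity then forces $\m{v}\in\mathrm{span}(\m{1}_n)$. Hence the eigenvalue $1$ is simple.

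To show $\lambda_n>-1$, I would use the analogous identity
\[
\m{v}\tr(\m{I}_n+\tilde{\m{W}})\m{v}=\tfrac12\sum_{i\neq j}\tilde W_{ij}(v_i+v_j)^2+2\sum_i\tilde W_{ii}v_i^2 .
\]
Since $\tilde W_{ii}>0$, this is strictly positive for $\m{v}\neq 0$. This step, which uses the positive self-weights, is the only genuinely nontrivial point of the lemma.

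\textbf{Item 3.} Double stochasticity together with symmetry gives $\m{1}_n\tr\tilde{\m{W}}=\m{1}_n\tr$ and $\tilde{\m{W}}\m{1}_n=\m{1}_n$. Therefore $\frac1n\m{1}_n\m{1}_n\tr\tilde{\m{W}}=\tilde{\m{W}}\frac1n\m{1}_n\m{1}_n\tr=\frac1n\m{1}_n\m{1}_n\tr$, and tensoring with $\m{I}_p$ yields $\m{M}=\m{M}\m{W}=\m{W}\m{M}$.

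\textbf{Item 2.} Take an orthonormal eigenbasis of $\tilde{\m{W}}$ with first vector $\m{1}_n/\sqrt n$. In this basis $\tilde{\m{W}}-\frac1n\m{1}_n\m{1}_n\tr$ has eigenvalues $0,\lambda_2,\dots,\lambda_n$. Its spectral radius is therefore $\max\{|\lambda_2|,|\lambda_n|\}=\sigma$, and $\sigma<1$ by item 1. The Kronecker product with $\m{I}_p$ preserves this spectral radius.

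\textbf{Item 4.} First, $\m{M}$ is idempotent and commutes with $\m{W}$ by item 3, so $(\m{W}-\m{M})^k=\m{W}^k-\m{M}$ by induction; the cross terms collapse to $\m{M}$. Next, $(\m{W}^k-\m{M})\m{M}=\m{M}-\m{M}=0$, which gives
\[
\m{W}^k\m{x}-\m{M}\m{x}=(\m{W}^k-\m{M})(\m{x}-\m{M}\m{x}).
\]
Finally, $\m{W}-\m{M}$ is symmetric, so its operator norm equals its spectral radius $\sigma$. Hence $\|\m{W}^k-\m{M}\|=\|(\m{W}-\m{M})^k\|\le\sigma^k$, which gives the claimed bound.
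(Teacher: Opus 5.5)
Your proof is correct and complete. The paper gives no argument of its own; it only says these are standard consequences of the symmetry, double stochasticity, and positive diagonal of $\tilde{\m{W}}$, with a pointer to the literature. Your quadratic-form identities for $\m{I}_n\mp\tilde{\m{W}}$, the orthonormal-eigenbasis computation of $\sigma$, and the identity $(\m{W}-\m{M})^k=\m{W}^k-\m{M}$ are exactly the standard derivation behind that citation.

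You also make explicit something the paper's one-line justification leaves out. The bound $\lambda_2(\tilde{\m{W}})<1$ needs connectivity of $\mathcal{G}$ (Assumption~\ref{as_network}, which enters through the support condition in Definition~\ref{mix}). The positive self-weights are what give $\lambda_n(\tilde{\m{W}})>-1$.
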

These standard properties follow from the symmetry, double stochasticity, and positive diagonal of $\tilde{\m{W}}$; see, e.g., \cite{xin2019distributed}.

We say that a point $\m{x}^* \in \mathbb{R}^{np}$ is a first-order stationary point of \eqref{obj_fun1} if it satisfies 
\begin{equation}\label{stationarity}
	\Vert\overline{\nabla} f(\m{x}^*)\Vert^2+\Vert\m{x}^*-\m{M}\m{x}^*\Vert^2=0.
\end{equation}

\section{A new perspective for gradient-tracking methods with multiple local updates}
\subsection{Surrogate-based interpretation}
A class of gradient-tracking methods with a single local update can be written in the following form:
\begin{align}
	&\m{x}_i^{t+1}=\sum_{j \in \mathcal{N}_i} \tilde{W}_{ij}\left(\m{x}_j^t-\alpha \m{H}_j^t\m{v}_j^t\right),\label{update1}\\
	&\m{v}_i^{t+1}=\sum_{j \in \mathcal{N}_i} \tilde{W}_{ij}\left(\m{v}_j^t+\m{g}_j^{t+1}-\m{g}_j^t\right),\label{update2}
\end{align}
initialized with $\m{v}_i^0=\m{g}_i^0$ for $i=1,\ldots,n$. $\m{v}_i^t$ tracks the global gradient $\nabla F(\bar{\m{x}}^t)$ via dynamic average consensus \cite{zhu2010discrete}. $\m{H}_i^t$ is a suitable symmetric positive definite matrix. By properly choosing $\m{H}_i^t$, we retrieve standard methods, in particular:
\begin{itemize}
	\item if $\m{H}_i^t=\m{I}_p$, we obtain the adapt-then-combine gradient-tracking methods \cite{xu2015augmented,nedic2017geometrically}.
	\item if $\m{H}_i^t$ is a positive definite matrix obtained with suitable update rules, we obtain the quasi-Newton methods \cite{wu2026unified}.
\end{itemize}

We provide a surrogate-based interpretation of the above updates \eqref{update1} and \eqref{update2}. This perspective constructs locally computable models whose gradients incorporate the tracked global-gradient information. The core challenge in decentralized nonconvex optimization lies in the nonconvexity of local objectives $f_i$ and the unavailability of other nodes' functions $f_j$ ($j \neq i$), preventing node $i$ from directly  solving the problem \eqref{obj_fun1}.
To address this, we construct a locally computable surrogate for the {global} objective $F(\m{z})=f_i(\m{z})+F(\m{z})-f_i(\m{z})$. At iteration $t$, each node $i$ first constructs a local quadratic model $\tilde{f}_i(\cdot;\m{x}_i^t)$ for $f_i$ at $\m{x}_i^t$:
\begin{align*}
	\tilde{f}_i(\m{z};\m{x}_i^t) = f_i(\m{x}_i^t) + \langle\nabla f_i(\m{x}_i^t),\m{z}-\m{x}_i^t\rangle + \frac{1}{2\alpha}(\m{z}-\m{x}_i^t)\tr (\m{H}_i^t)^{-1}(\m{z}-\m{x}_i^t),
\end{align*}
and then combines it with a linear approximation of the rest of the network's objective using the tracking variable $\m{v}_i^t$:
\begin{align*}
	\tilde{F}(\m{z},\m{x}_i^t): = \tilde{f}_i(\m{z};\m{x}_i^t) + \langle\m{v}_i^t - \nabla f_i(\m{x}_i^t),\m{z}\rangle.
\end{align*}
The update step $\m{x}_i^{t+1/2} = \m{x}_i^t - \alpha \m{H}_i^t\m{v}_i^t$ corresponds to minimizing this surrogate $\tilde{F}(\m{z};\m{x}_i^t)$. The local surrogate update is then followed by the consensus step 
$$
\m{x}_i^{t+1} = \sum_{j \in \mathcal{N}_i} \tilde{W}_{ij} \m{x}_j^{t+1/2}
$$ 
and the gradient tracking step \eqref{update2}.


Now we extend the updates \eqref{update1} and \eqref{update2} to incorporate multiple local updates ($K_g \geq 1$) and communication rounds ($K_c \geq 1$) per iteration, and retain the surrogate-based interpretation. 
Specifically, the mixing matrix $\m{W}$ in \eqref{update1} and \eqref{update2} is replaced by $\m{W}^{K_c}$, representing $K_c$ consecutive communication steps. Furthermore, we allow nodes to perform multiple local updates before communicating. The detailed procedure is presented in Algorithm \ref{alg:Framwork1}.
\begin{algorithm}[htb]
	\caption{Gradient Tracking with Multiple Computations and Communications}
	\label{alg:Framwork1}
	\begin{algorithmic}[1]
		\Require Initial point $\m{x}^0$, Maximum iteration $T$, Stepsize $\alpha>0$, Mixing matrix $\m{W}$, Maximum computations $K_{g} \geq 1$, Maximum communications $K_c \geq 1$.
		\State Initialize $t=0$, $\m{v}^{0}=\m{g}^{0}$, $\m{H}^0=\m{I}$.
		\While{$t < T$}
		\State $\m{x}^{t,0}=\m{x}^t$, $\m{v}^{t,0}=\m{v}^t$.
		\For{$k = 0$ to $K_g-2$}
		\State $\m{x}^{t,k+1}= \m{x}^{t,k}-\alpha \m{H}^{t,k} \m{v}^{t,k}$.
		\State $\m{v}^{t,k+1}=\m{v}^{t,k}+\m{g}^{t,k+1}-\m{g}^{t,k}$.
		\State Update $\m{H}^{t,k+1}$.
		\EndFor
		\State $\m{x}^{t,K_g} = \m{x}^{t,K_g-1} - \alpha \m{H}^{t,K_g-1} \m{v}^{t,K_g-1}$.
		\State $\m{x}^{t+1} = \m{W}^{K_c} \m{x}^{t,K_g}$.
		\State $\m{v}^{t,K_g} = \m{v}^{t,K_g-1} + \m{g}^{t+1} - \m{g}^{t,K_g-1}$.
		\State $\m{v}^{t+1} = \m{W}^{K_c} \m{v}^{t,K_g}$.
		\State Update $\m{H}^{t+1}$.
		\State $t \leftarrow t+1$.
		\EndWhile
		\Ensure $\m{x}^T$.
	\end{algorithmic}
\end{algorithm}

\begin{remark}\label{remark_2.2}
	Algorithm \ref{alg:Framwork1} specializes to the following schemes considered in this paper: setting $K_g=1, K_c=1$ recovers the DNQN method, fixing $\m{H}^t=\m{I}$ recovers the GTA method \cite{berahas2024balancing}, and setting $K_c=1$ and $\m{H}^t=\m{I}$ yields the local update recursion used by LU-GT \cite{nguyen2023performance}, LSGT \cite{ge2023gradient}, and FlexGT \cite{huang2023computation}. Notably, LSGT and FlexGT update $\m{v}^{t,K_g} = \m{v}^{t,K_g-1} + \m{g}^{t,K_g} - \m{g}^{t,K_g-1}$ and then conduct the communication step $\m{x}^{t+1} = \m{W}^{K_c} \m{x}^{t,K_g}$, which is slightly different from Steps 9 and 10 in Algorithm \ref{alg:Framwork1}.
\end{remark}

In fact, Algorithm \ref{alg:Framwork1} admits a direct surrogate-based interpretation. At iteration $t$, rather than constructing a strongly convex majorizing surrogate, each node $i$ directly considers the original local objective corrected by the global gradient estimate:
\begin{align}\label{eq:mm_target}
	\min_{\m{z}} \phi_i^t(\m{z}) := f_i(\m{z}) + \langle\m{v}_i^t - \nabla f_i(\m{x}_i^t), \m{z}\rangle.
\end{align}
Observe that $\nabla \phi_i^t(\m{x}_i^t) = \m{v}_i^t$, ensuring the descent direction aligns with the global gradient estimate. Rather than solving \eqref{eq:mm_target} exactly, which may be intractable for nonconvex $f_i$, Algorithm \ref{alg:Framwork1} performs $K_g$ inner 
iteration steps. This constitutes an inexact solution procedure for the local surrogate, corresponding to Steps 4-8 of Algorithm \ref{alg:Framwork1}. 
The surrogate-based interpretation proceeds as follows. At outer iteration $t$, each node $i$:
\begin{enumerate}
	\item Performs inexact minimization via $K_g$ iteration steps:
	\begin{align*}
		\m{x}_i^{t,k+1} = \m{x}_i^{t,k} - \alpha \m{H}_i^{t,k}\nabla\phi_i^t(\m{x}_i^{t,k}), \quad k=0,\dots,K_g-1;
	\end{align*}
	\item Achieves consensus through $K_c$ communication rounds of form with all nodes stacked:
	\begin{align*}
		\m{x}^{t+1} &= \m{W}^{K_c}\m{x}^{t,K_g}, \\
		\m{v}^{t+1} &= \m{W}^{K_c}\left(\m{v}^{t}+\m{g}^{t+1}-\m{g}^t\right);
	\end{align*}
	\item Updates $\m{H}_i^{t+1}$ for the next cycle.
\end{enumerate}
It is worth noting that the local update recursions of the schemes \cite{berahas2024balancing,nguyen2023performance,ge2023gradient,huang2023computation} listed in Remark \ref{remark_2.2} admit the above surrogate-based representation. The RIA framework below is therefore used to analyze this specific class of updates. A key question is whether the local surrogate problem
\eqref{eq:mm_target} should be solved exactly. Under the general
assumptions considered here, exact minimization is neither required
nor necessarily well-defined.

\begin{remark}[On Exact Local Minimization]\label{Remark 2.3}
	The surrogate $\phi_i^t$ may be nonconvex and noncoercive, and hence
	an exact minimizer need not exist. Even when it exists, exact local
	minimization may produce large and heterogeneous displacements across
	the nodes, thereby increasing the consensus and gradient-tracking
	errors. We therefore employ a limited or adaptively selected number
	of local steps and require only the sufficient-descent and
	bounded-deviation conditions introduced below. Exact minimization may
	still be applicable under additional structural assumptions.
\end{remark}

\subsection{Robust inexact algorithm framework}\label{RIAsection}
Motivated by the limitations of fixed local updates and the above considerations, we propose the Robust Inexact Algorithm (RIA) framework. This framework formalizes the local update as an adaptive procedure that terminates based on solution quality rather than a fixed iteration count $K_g$.
The RIA framework abstracts the local update as an adaptive inexact procedure and takes the following form:
\begin{align}
	\m{x}_i^{t+1/2} &\leftarrow \text{InexactSolver}\left(\min_{\m{z}} \phi_i^t(\m{z})\right) \label{ia1}, \\
	\m{x}^{t+1} &= \m{W}^{K_c}\m{x}^{t+1/2} , \label{ia2} \\
	\m{v}^{t+1} &= \m{W}^{K_c}\left(\m{v}^{t} + \m{g}^{t+1} - \m{g}^t\right). \label{ia3}
\end{align}
Critically, $\m{x}_i^{t+1/2}$ is not required to be a stationary point. Instead, to guarantee convergence, the inexact solution should satisfy two specific regularity conditions stated in the following assumption.
\begin{assum}\label{IC}
	The approximate solution $\m{x}_i^{t+1/2}$ should satisfy:
	\begin{itemize}
		\item [1.] \textbf{Sufficient Descent: } 
	\begin{align}\label{as3.1}
		\phi_i^t(\m{x}_i^{t+1/2}) \leq \phi_i^t(\m{x}_i^{t})-\gamma_1 \Vert\m{v}_i^t\Vert^2
	\end{align}
	for some $\gamma_1>0$. 
		\item [2.] \textbf{Bounded Deviation:}
		 \begin{align}\label{as3.2}
			\Vert\m{x}_i^{t+1/2}-\m{x}_i^{t}\Vert^2 \leq \gamma_2\Vert\m{v}_i^t\Vert^2
		\end{align}
		for some $\gamma_2>0$.
	\end{itemize}
\end{assum}
The sufficient descent condition ensures that each inexact minimization makes sufficient progress in reducing the local surrogate function. On the other hand, the bounded deviation condition limits movement from the current iterate, preventing excessive deviations that could disrupt consensus.

\subsection{Convergence analysis for RIA}
We now establish the convergence guarantees for the proposed RIA applied to problem \eqref{obj_fun1}. We assume each local objective $f_i$ ($i=1,\ldots,n$) is differentiable with Lipschitz continuous gradients, but possibly nonconvex. Throughout this subsection, we consider $0<\sigma<1$; the exact-averaging case $\sigma=0$ will only be used in Proposition \ref{prop:tightness}, where its dynamics are analyzed directly. From the updating formula \eqref{update2}, the following invariant property holds by induction \cite{qu2017harnessing}.
\begin{equation}\label{vequalg}
	\m{M}{\m{v}}^t=\m{M}{\m{g}}^t \quad \Longleftrightarrow \quad \bar{\m{v}}^t=\bar{\m{g}}^t.
\end{equation}
This relation \eqref{vequalg} ensures that the average of the tracking variables precisely captures the average of the true local gradients $\bar{\m{g}}^t =\overline{\nabla} f(\m{x}^t)= \frac{1}{n} \sum_{i=1}^n \nabla f_i(\m{x}_i^t)$.
Given the invariant $\bar{\m{v}}^t = \bar{\m{g}}^t$ and the definition of stationary point \eqref{stationarity}, approaching stationarity requires simultaneously driving $\Vert\m{x}^t-\m{M}\m{x}^t\Vert$, $\Vert\m{v}^t-\m{M}\m{v}^t\Vert$, and $\Vert\m{v}^t\Vert$ to zero, and thus we define $\m{x}^t$ as an $\epsilon$-stationary solution if it satisfies
\begin{equation}\label{eps-stationary}
	\Vert\m{v}^t\Vert^2+\Vert\m{v}^t-\m{M}\m{v}^t\Vert^2+\Vert\m{x}^t-\m{M}\m{x}^t\Vert^2\leq \epsilon.
\end{equation}
To demonstrate that RIA generates a sequence satisfying \eqref{eps-stationary}, we construct the following potential function:
\begin{equation}\label{potential}
	P(\m{x}^{t},\m{v}^{t})=F(\bar{\m{x}}^{t})+\Vert\m{x}^{t}-\m{M}\m{x}^{t}\Vert^2+\frac{1-(1+\tau){\sigma}^{2K_c}}{16(1+1/\eta)L^2{\sigma}^{2K_c}}\Vert\m{v}^{t}-\m{M}\m{v}^{t}\Vert^2,
\end{equation}
where $\tau$ and $\eta$ are some positive constants introduced by Young's inequality. $F(\bar{\m{x}}^t)$ behaves as the global objective value at the averaged iterate $\bar{\m{x}}^t$, $\Vert\m{x}^{t}-\m{M}\m{x}^{t}\Vert^2$ represents the consensus error at $\m{x}^{t}$, and $\Vert\m{v}^{t}-\m{M}\m{v}^{t}\Vert^2$ behaves as the gradient tracking error and quantifies the disagreement between the average gradient approximation $\m{v}^t$ and the average gradient $\m{M}{\m{v}}^t=\m{M}{\m{g}}^t$.
Since the objective function is bounded below, the potential function $P(\m{x}^t, \m{v}^t)$ is naturally bounded below. Our analysis proceeds by proving the sufficient descent property of $P(\m{x}^t, \m{v}^t)$.

We first establish a descent bound at $\bar{\m{x}}^t$ and shared by RIA and AdaDQN. The bound only requires the two inexact-solution conditions and is therefore unaffected by the particular communication implementation.
\begin{lemma}\label{lem:descent_lemma}
	Suppose that Assumptions \ref{as0}, \ref{as1}, and \ref{IC} hold. Let the communication step satisfy $\m{M}\m{v}^t=\m{M}\m{g}^t$ and $\bar{\m{x}}^{t+1}=\bar{\m{x}}^{t+1/2}$. Then
	\begin{align}\label{term1}
		F(\bar{\m{x}}^{t+1})\leq& F(\bar{\m{x}}^{t})-\frac{2\gamma_1-L\gamma_2(2+d+b) }{2n}\Vert\m{v}^t\Vert^2\notag\\
		&+\frac{L}{2bn}\Vert\m{M}\m{x}^t-\m{x}^t \Vert^2+\frac{1}{2Ldn}\Vert{\m{v}}^t-\m{M}{\m{v}}^t\Vert^2.
	\end{align}
	where $b,d>0$ are arbitrary constants and $\gamma_1,\gamma_2$ are given by Assumption \ref{IC}.
\end{lemma}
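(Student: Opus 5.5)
The plan is to combine the $L$-smoothness of $F$ at the averaged iterate with a per-node lower bound from the sufficient-descent condition, and then to absorb the two error terms (consensus and tracking) via Young's inequality with the free parameters $b$ and $d$.

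\textbf{Descent inequality for $F$.} Set $\m{d}_i:=\m{x}_i^{t+1/2}-\m{x}_i^t$. Since $\bar{\m{x}}^{t+1}=\bar{\m{x}}^{t+1/2}$, we have $\bar{\m{x}}^{t+1}-\bar{\m{x}}^t=\bar{\m{d}}=\frac1n\sum_i\m{d}_i$. Because $F$ is an average of $L$-smooth functions (Assumption~\ref{as1}), $F$ is $L$-smooth, and the descent lemma gives
\begin{equation*}
F(\bar{\m{x}}^{t+1})\le F(\bar{\m{x}}^t)+\frac1n\sum_{i=1}^n\langle\nabla F(\bar{\m{x}}^t),\m{d}_i\rangle+\frac L2\Vert\bar{\m{d}}\Vert^2 .
\end{equation*}
By Jensen's inequality and \eqref{as3.2}, the last term is at most $\frac{L}{2n}\sum_i\Vert\m{d}_i\Vert^2\le\frac{L\gamma_2}{2n}\Vert\m{v}^t\Vert^2$.

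\textbf{Splitting the inner product.} Using the invariant $\bar{\m{v}}^t=\bar{\m{g}}^t$, I decompose
\begin{equation*}
\nabla F(\bar{\m{x}}^t)=\m{v}_i^t+\m{e}^t+(\bar{\m{v}}^t-\m{v}_i^t),\qquad \m{e}^t:=\nabla F(\bar{\m{x}}^t)-\bar{\m{g}}^t .
\end{equation*}
This produces three groups of terms, handled as follows.

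First, the terms $\langle\m{v}_i^t,\m{d}_i\rangle$. Write $\phi_i^t(\m{x}_i^{t+1/2})-\phi_i^t(\m{x}_i^t)=f_i(\m{x}_i^t+\m{d}_i)-f_i(\m{x}_i^t)+\langle\m{v}_i^t-\m{g}_i^t,\m{d}_i\rangle$. Lower-bound $f_i(\m{x}_i^t+\m{d}_i)\ge f_i(\m{x}_i^t)+\langle\m{g}_i^t,\m{d}_i\rangle-\frac L2\Vert\m{d}_i\Vert^2$ and combine with \eqref{as3.1}. This yields $\langle\m{v}_i^t,\m{d}_i\rangle\le-\gamma_1\Vert\m{v}_i^t\Vert^2+\frac L2\Vert\m{d}_i\Vert^2$. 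Summing over $i$ and using \eqref{as3.2},
\begin{equation*}
\frac1n\sum_i\langle\m{v}_i^t,\m{d}_i\rangle\le\frac{-\gamma_1+L\gamma_2/2}{n}\Vert\m{v}^t\Vert^2 .
\end{equation*}

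Second, the terms $\langle\m{e}^t,\m{d}_i\rangle$. Since $\m{e}^t=\frac1n\sum_i(\nabla f_i(\bar{\m{x}}^t)-\nabla f_i(\m{x}_i^t))$, Lipschitz continuity and Cauchy–Schwarz give $\Vert\m{e}^t\Vert^2\le\frac{L^2}{n}\Vert\m{x}^t-\m{M}\m{x}^t\Vert^2$. Young's inequality $\langle\m{e}^t,\m{d}_i\rangle\le\frac{1}{2bL}\Vert\m{e}^t\Vert^2+\frac{bL}{2}\Vert\m{d}_i\Vert^2$, averaged over $i$, then gives the bound $\frac{L}{2bn}\Vert\m{x}^t-\m{M}\m{x}^t\Vert^2+\frac{bL\gamma_2}{2n}\Vert\m{v}^t\Vert^2$.

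Third, the terms $\langle\bar{\m{v}}^t-\m{v}_i^t,\m{d}_i\rangle$. Young's inequality with weight $\frac{1}{2Ld}$ on the first factor and $\frac{dL}{2}$ on the second, summed over $i$, gives $\frac{1}{2Ldn}\Vert\m{v}^t-\m{M}\m{v}^t\Vert^2+\frac{dL\gamma_2}{2n}\Vert\m{v}^t\Vert^2$.

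\textbf{Collecting coefficients.} The coefficient of $\frac1n\Vert\m{v}^t\Vert^2$ is
\begin{equation*}
-\gamma_1+\frac{L\gamma_2}{2}(1+b+d+1)=-\frac{2\gamma_1-L\gamma_2(2+b+d)}{2},
\end{equation*}
which is exactly the coefficient in \eqref{term1}.

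The main point needing care is the first group: the sufficient-descent condition is stated for the nonconvex $\phi_i^t$, and converting it into a bound on $\langle\m{v}_i^t,\m{d}_i\rangle$ requires the \emph{lower} quadratic bound on $f_i$. This step costs the extra $\frac{L}{2}\Vert\m{d}_i\Vert^2$, which is why the coefficient contains $2+b+d$ rather than $1+b+d$. The remaining steps are routine Young's-inequality bookkeeping.
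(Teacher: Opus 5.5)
Your proposal is correct and follows essentially the same route as the paper: the same three-way split of $\nabla F(\bar{\m{x}}^t)$ (via the invariant $\bar{\m{v}}^t=\bar{\m{g}}^t$), the same Young's weights $1/(bL)$ and $1/(dL)$, the lower quadratic bound on $f_i$ to turn \eqref{as3.1} into a bound on $\langle\m{v}_i^t,\m{d}_i\rangle$, and Jensen plus \eqref{as3.2} for the quadratic term. Your bookkeeping matches the coefficient $2+b+d$ exactly.
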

\begin{proof}
	By the $L$-Lipschitz continuity of $\nabla F$, we have
	\begin{align}\label{Fbarx11}
		F(\bar{\m{x}}^{t+1})\leq F(\bar{\m{x}}^{t})+\Big\langle\nabla F(\bar{\m{x}}^{t}),\bar{\m{x}}^{t+1}-\bar{\m{x}}^{t}\Big\rangle+\frac{L}{2}\Vert\bar{\m{x}}^{t+1}-\bar{\m{x}}^{t}\Vert^{2}.
	\end{align}
	Substituting the relation $\bar{\m{x}}^{t+1}=\bar{\m{x}}^{t+1/2}$ into the inner product term in \eqref{Fbarx11} yields
	\begin{align*}
		&\Big\langle\nabla F(\bar{\m{x}}^{t}),\bar{\m{x}}^{t+1}-\bar{\m{x}}^{t}\Big\rangle=\frac{1}{n}\sum_{i=1}^n \Big  \langle\nabla F(\bar{\m{x}}^{t}),{\m{x}}_i^{t+1/2}-{\m{x}}_i^{t}\Big\rangle\\
		=&\underbrace{\frac{1}{n}\sum_{i=1}^n \Big  \langle \nabla F(\bar{\m{x}}^{t}) -\overline{\nabla} f({\m{x}}^t),{\m{x}}_i^{t+1/2}-{\m{x}}_i^{t}\Big\rangle}_{\text{term~I}}\\
		&+\underbrace{\frac{1}{n}\sum_{i=1}^n \Big  \langle \overline{\nabla} f({\m{x}}^t)- \m{v}_i^t,{\m{x}}_i^{t+1/2}-{\m{x}}_i^{t}\Big\rangle}_{\text{term~II}}+\underbrace{\frac{1}{n}\sum_{i=1}^n \Big  \langle  \m{v}_i^t,{\m{x}}_i^{t+1/2}-{\m{x}}_i^{t}\Big\rangle}_{\text{term~III}}.
	\end{align*}
	We bound each term separately. For term I, using Young's inequality with parameter $1/(bL)>0$. we have
	\begin{eqnarray}\label{termB}
		&&	\frac{1}{n} \sum_{i=1}^n \Big\langle\nabla F(\bar{\m{x}}^t)-\overline{\nabla} f({\m{x}}^t),{\m{x}}_i^{t+1/2}-{\m{x}}_i^{t}\Big\rangle \notag\\
		&\leq& \frac{1}{2bL}\Vert\nabla F(\bar{\m{x}}^t)-\overline{\nabla} f({\m{x}}^t)\Vert^2+\frac{Lb}{2n}\Vert{\m{x}}^{t+1/2}-{\m{x}}^{t}\Vert^2 \notag\\
		&\leq & \frac{L}{2bn}\Vert\m{M}\m{x}^t-\m{x}^t \Vert^2+\frac{\gamma_2Lb}{2n}\Vert{\m{v}}^{t}\Vert^2,
	\end{eqnarray}
	where the last inequality uses the $L$-Lipschitz continuity of each $\nabla f_i$ and \eqref{as3.2}.
	For term II, using Young's inequality with parameter $1/(dL)>0$, we have
	\begin{align}\label{termC}
		\frac{1}{n} \sum_{i=1}^n \Big\langle\overline{\nabla} f({\m{x}}^t)-{\m{v}}_i^t,{\m{x}}_i^{t+1/2}-{\m{x}}_i^{t}\Big\rangle \leq &\frac{1}{2Ldn}\Vert\m{1}\otimes\overline{\nabla} f({\m{x}}^t)-{\m{v}}^t\Vert^2+\frac{Ld}{2n}\Vert{\m{x}}^{t+1/2}-{\m{x}}^{t}\Vert^2 \notag \\
		= & \frac{1}{2Ldn}\Vert{\m{v}}^t-\m{M}{\m{v}}^t\Vert^2+\frac{\gamma_2Ld}{2n}\Vert{\m{v}}^{t}\Vert^2,
	\end{align}
	where the equality applies \eqref{vequalg} and \eqref{as3.2}.	For the term III, first using \eqref{as3.1} and rearranging, we obtain
	\begin{align*}
		f_i(\m{x}_i^{t+1/2}) -f_i(\m{x}_i^{t})+ \langle\m{v}_i^t - \nabla f_i(\m{x}_i^t), \m{x}_i^{t+1/2}-\m{x}_i^{t}\rangle
		\leq -\gamma_1 \Vert\m{v}_i^t\Vert^2.
	\end{align*}
	Then, employing the $L$-Lipschitz continuity of $\nabla f_i$ with \eqref{as3.2} yields
	\begin{align}\label{termA}
		\frac{1}{n}\sum_{i=1}^n \langle\m{v}_i^t, \m{x}_i^{t+1/2}-\m{x}_i^{t}\rangle
		\leq -\frac{2\gamma_1-L\gamma_2 }{2n}\Vert\m{v}^{t}\Vert^2.
	\end{align}
	Applying $\bar{\m{x}}^{t+1}=\bar{\m{x}}^{t+1/2}$ and \eqref{as3.2} to the quadratic term in \eqref{Fbarx11} gives
	\begin{align}\label{reminder}
		\frac{L}{2}\Vert\bar{\m{x}}^{t+1}-\bar{\m{x}}^{t}\Vert^{2}\leq \frac{L \gamma_2}{2n} \Vert\m{v}^t\Vert^2
	\end{align}
	Substituting \eqref{termC}, \eqref{termA}, \eqref{termB}, and \eqref{reminder} back into \eqref{Fbarx11} completes the proof.
\end{proof}

We next analyze the recursive upper bound of the gradient tracking error $\Vert\m{v}^t - \m{M}\m{v}^t\Vert^2$. The following lemma shows that this bound contracts over iterations, subject to perturbations from the consensus error $\Vert\m{x}^t-\m{M}\m{x}^t\Vert^2$ and the tracker norm $\Vert \m{v}^t \Vert^2$.
\begin{lemma}
	Suppose that Assumptions \ref{as0}, \ref{as1}, \ref{as_network}, and \ref{IC} hold. Let $\{\m{x}^t\}$ be the sequence generated by the RIA framework. Then, we have the following inequality
	\begin{align}\label{term2}
		&\frac{1-(1+\tau){\sigma}^{2K_c}}{16(1+1/\eta)L^2{\sigma}^{2K_c}}\Vert{\m{v}}^{t+1}-\m{M}{\m{v}}^{t+1}\Vert^2\leq\frac{1-(1+\tau){\sigma}^{2K_c}}{16(1+1/\eta)L^2{\sigma}^{2K_c}}\Vert{\m{v}}^{t}-\m{M}{\m{v}}^{t}\Vert^2\notag\\
		&- \frac{(1-(1+\tau){\sigma}^{2K_c})(1-(1+\eta)\sigma^{2K_c})}{16(1+1/\eta)L^2{\sigma}^{2K_c}}\Vert{\m{v}}^t-\m{M}{\m{v}}^t\Vert^2\notag\\
		&+\frac{1}{2}(1-(1+\tau){\sigma}^{2K_c})\Vert\m{M}\m{x}^t-\m{x}^t\Vert^2+\frac{1}{8}(1-(1+\tau){\sigma}^{2K_c}) \gamma_2\Vert\m{v}^t\Vert^2,
	\end{align}
	where $\eta,\tau >0$ are some constants introduced by Young's inequality.
\end{lemma}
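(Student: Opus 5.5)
The plan is to reduce \eqref{term2} to a one-step contraction of the tracking error, and then bound the increment $\Vert\m{x}^{t+1}-\m{x}^t\Vert^2$ using the bounded-deviation condition \eqref{as3.2}. Write $C:=\frac{1-(1+\tau){\sigma}^{2K_c}}{16(1+1/\eta)L^2{\sigma}^{2K_c}}$. Since $C-C\bigl(1-(1+\eta)\sigma^{2K_c}\bigr)=C(1+\eta)\sigma^{2K_c}$, inequality \eqref{term2} is equivalent to
\[
C\Vert\m{v}^{t+1}-\m{M}\m{v}^{t+1}\Vert^2\le C(1+\eta)\sigma^{2K_c}\Vert\m{v}^t-\m{M}\m{v}^t\Vert^2+\tfrac12\bigl(1-(1+\tau)\sigma^{2K_c}\bigr)\Vert\m{x}^t-\m{M}\m{x}^t\Vert^2+\tfrac18\bigl(1-(1+\tau)\sigma^{2K_c}\bigr)\gamma_2\Vert\m{v}^t\Vert^2 .
\]
This is the target inequality.

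\textbf{Step 1: contraction of the tracking error.} Start from \eqref{ia3}. Since $\m{M}\m{W}^{K_c}=\m{M}$, we have $\m{v}^{t+1}-\m{M}\m{v}^{t+1}=(\m{W}^{K_c}-\m{M})(\m{v}^t+\m{g}^{t+1}-\m{g}^t)$. Item 4 of Lemma~\ref{property W} bounds this by $\sigma^{K_c}\Vert(\m{v}^t-\m{M}\m{v}^t)+(\m{I}-\m{M})(\m{g}^{t+1}-\m{g}^t)\Vert$. Squaring, applying Young's inequality with parameter $\eta$, and using that $\m{I}-\m{M}$ is an orthogonal projection (hence nonexpansive) gives
\[
\Vert\m{v}^{t+1}-\m{M}\m{v}^{t+1}\Vert^2\le(1+\eta)\sigma^{2K_c}\Vert\m{v}^t-\m{M}\m{v}^t\Vert^2+(1+1/\eta)\sigma^{2K_c}L^2\Vert\m{x}^{t+1}-\m{x}^t\Vert^2 .
\]
The last term uses Assumption~\ref{as1} blockwise. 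After multiplying by $C$, the coefficient of $\Vert\m{x}^{t+1}-\m{x}^t\Vert^2$ becomes exactly $\frac{1}{16}\bigl(1-(1+\tau)\sigma^{2K_c}\bigr)$.

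\textbf{Step 2: bounding the increment.} It then suffices to show
\[
\Vert\m{x}^{t+1}-\m{x}^t\Vert^2\le 8\Vert\m{x}^t-\m{M}\m{x}^t\Vert^2+2\gamma_2\Vert\m{v}^t\Vert^2 .
\]
By \eqref{ia2}, and because $(\m{W}^{K_c}-\m{I})\m{M}=0$, we can decompose
\[
\m{x}^{t+1}-\m{x}^t=\m{W}^{K_c}(\m{x}^{t+1/2}-\m{x}^t)+(\m{W}^{K_c}-\m{I})(\m{x}^t-\m{M}\m{x}^t).
\]
Lemma~\ref{property W} gives $\Vert\m{W}^{K_c}\Vert\le1$ and $\Vert\m{W}^{K_c}-\m{I}\Vert\le2$, since the eigenvalues of $\m{W}$ lie in $(-1,1]$. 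Applying $\Vert a+b\Vert^2\le2\Vert a\Vert^2+2\Vert b\Vert^2$ and then \eqref{as3.2} summed over $i$ yields exactly the constants $2\gamma_2$ and $8$. Substituting back and multiplying out gives $\frac18$ and $\frac12$ times $1-(1+\tau)\sigma^{2K_c}$, which matches \eqref{term2}.

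\textbf{Main obstacle.} The only delicate point is Step 1: the perturbation $\m{g}^{t+1}-\m{g}^t$ must be handled through the projection $\m{I}-\m{M}$, so that Lemma~\ref{property W}(4) applies to the whole vector $\m{v}^t+\m{g}^{t+1}-\m{g}^t$. Everything else is constant bookkeeping. Note that $\tau$ plays no role in the argument; it only enters through the common factor in $C$, which is chosen so that the constants align with the potential \eqref{potential}.
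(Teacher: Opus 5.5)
Your proposal is correct and follows essentially the same route as the paper. Both arguments contract $\Vert\m{v}^{t+1}-\m{M}\m{v}^{t+1}\Vert^2$ via Young's inequality with parameter $\eta$ and Lemma~\ref{property W}, then use the decomposition $\m{x}^{t+1}-\m{x}^t=(\m{W}^{K_c}-\m{I})(\m{x}^t-\m{M}\m{x}^t)+\m{W}^{K_c}(\m{x}^{t+1/2}-\m{x}^t)$ to get the constants $8$ and $2\gamma_2$, and finally multiply by $C$; as in the paper, that last step tacitly needs $(1+\tau)\sigma^{2K_c}\le 1$ so that $C\ge 0$, which the choice of $\tau$ in Theorem~\ref{RIA_conv} guarantees.
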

\begin{proof}
	Using Young's inequality with parameter $\eta >0$, Lemma \ref{property W}, and the $L$-Lipschitz continuity of $\nabla f_i$, we establish 
	\begin{align*}
		&\Vert{\m{v}}^{t+1}-\m{M}{\m{v}}^{t+1}\Vert^2
		=\Vert \m{W}^{K_c}{\m{v}}^{t}+\m{W}^{K_c}\m{g}^{t+1}-\m{W}^{K_c}\m{g}^{t} -\m{M}\m{v}^t-\m{M}\m{g}^{t+1}+\m{M}\m{g}^t\Vert^2 \notag\\
		\leq & (1+\eta)\Vert\m{W}^{K_c}{\m{v}}^t-\m{M}{\m{v}}^t\Vert^2+(1+1/\eta)\Vert(\m{W}^{K_c}-\m{M})(\m{g}^{t+1}-\m{g}^t)\Vert^2\notag\\
		\leq & (1+\eta)\sigma^{2K_c}\Vert{\m{v}}^t-\m{M}{\m{v}}^t\Vert^2+(1+1/\eta)\sigma^{2K_c}L^2\Vert\m{x}^{t+1}-\m{x}^t\Vert^2.\notag
	\end{align*}
	Observe that $\m{x}^{t+1}-\m{x}^t=\m{W}^{K_c}\m{x}^t-\m{M}\m{x}^t+\m{M}\m{x}^t-\m{x}^t+\m{W}^{K_c}(\m{x}^{t+1/2}-\m{x}^t)$. Then, invoking Lemmas \ref{property W} and \ref{important} gives
	\begin{align*}
		\Vert\m{x}^{t+1}-\m{x}^t\Vert^2 \leq 8\Vert\m{M}\m{x}^t-\m{x}^t\Vert^2+2\Vert\m{x}^{t+1/2}-\m{x}^t\Vert^2.
	\end{align*}
	Combining the above two inequalities with \eqref{as3.2}, we get
	\begin{align*}
		&\Vert{\m{v}}^{t+1}-\m{M}{\m{v}}^{t+1}\Vert^2
		\leq  (1+\eta)\sigma^{2K_c}\Vert{\m{v}}^t-\m{M}{\m{v}}^t\Vert^2\\
		&+8(1+1/\eta)\sigma^{2K_c}L^2\Vert\m{M}\m{x}^t-\m{x}^t\Vert^2+2(1+1/\eta)\sigma^{2K_c}L^2 \gamma_2\Vert\m{v}^t\Vert^2,\notag
	\end{align*}
	Multiplying both sides of the above inequality by $\frac{1-(1+\tau){\sigma}^{2K_c}}{16(1+1/\eta)L^2{\sigma}^{2K_c}}$ and rearranging terms yields the desired inequality.
\end{proof}

The following Lemma derives a recursion relationship of the consensus error $\Vert\m{x}^t - \m{M}\m{x}^t\Vert^2$.
\begin{lemma}\label{lem:consensus_error}
	Suppose that Assumptions \ref{as0}, \ref{as1}, \ref{as_network}, and \ref{IC} hold. Let $\{\m{x}^t\}$ be the sequence generated by the RIA framework. Then, we have the following inequality
	\begin{align}\label{term3}
		\Vert\m{x}^{t+1}-\m{M}\m{x}^{t+1}\Vert^2
		\leq(1+\tau){\sigma}^{2K_c} \Vert\m{x}^{t}-\m{M}\m{x}^{t}\Vert^2+(1+1/\tau){\sigma}^{2K_c}\gamma_2\Vert{\m{v}}^t\Vert^2, 
	\end{align}	
	where $\tau >0$ is some constant introduced by Young's inequality
\end{lemma}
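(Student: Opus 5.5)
We write the consensus step \eqref{ia2} as $\m{x}^{t+1}=\m{W}^{K_c}\m{x}^{t+1/2}$ and use $\m{M}\m{W}^{K_c}=\m{M}$ from Lemma~\ref{property W} (item 3). This gives
\begin{align*}
\m{x}^{t+1}-\m{M}\m{x}^{t+1}=(\m{W}^{K_c}-\m{M})\m{x}^{t+1/2}=(\m{W}^{K_c}-\m{M})\m{x}^{t}+(\m{W}^{K_c}-\m{M})(\m{x}^{t+1/2}-\m{x}^t).
\end{align*}
The idea is to separate the part of the disagreement inherited from the previous iterate from the part created by the local inexact solves. Each part is then contracted by $K_c$ mixing rounds.

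Next we apply Young's inequality $\Vert a+b\Vert^2\le(1+\tau)\Vert a\Vert^2+(1+1/\tau)\Vert b\Vert^2$ with the free parameter $\tau>0$. This yields
\begin{align*}
\Vert\m{x}^{t+1}-\m{M}\m{x}^{t+1}\Vert^2\le(1+\tau)\Vert(\m{W}^{K_c}-\m{M})\m{x}^t\Vert^2+(1+1/\tau)\Vert(\m{W}^{K_c}-\m{M})(\m{x}^{t+1/2}-\m{x}^t)\Vert^2 .
\end{align*}
\begin{itemize}
\item For the first term, item 4 of Lemma~\ref{property W} gives $\Vert(\m{W}^{K_c}-\m{M})\m{x}^t\Vert\le\sigma^{K_c}\Vert\m{x}^t-\m{M}\m{x}^t\Vert$.
\item For the second term, we need $\Vert(\m{W}^{K_c}-\m{M})\m{y}\Vert\le\sigma^{K_c}\Vert\m{y}\Vert$ for an arbitrary $\m{y}$. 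This holds because $(\m{W}^{K_c}-\m{M})=(\m{W}^{K_c}-\m{M})(\m{I}-\m{M})$ and $\Vert(\m{I}-\m{M})\m{y}\Vert\le\Vert\m{y}\Vert$. Equivalently, $\m{W}-\m{M}$ is symmetric with spectral radius $\sigma$, and $(\m{W}-\m{M})^{K_c}=\m{W}^{K_c}-\m{M}$.
\end{itemize}

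Finally, summing the bounded deviation condition \eqref{as3.2} over nodes gives $\Vert\m{x}^{t+1/2}-\m{x}^t\Vert^2\le\gamma_2\Vert\m{v}^t\Vert^2$. Substituting this into the second term produces \eqref{term3}.

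The only step requiring care is justifying the operator-norm bound $\sigma^{K_c}$ for a non-consensus-projected vector. That bound rests on the identity $\m{W}^{K_c}-\m{M}=(\m{W}-\m{M})^{K_c}$, which follows from $\m{W}\m{M}=\m{M}\m{W}=\m{M}$ and $\m{M}^2=\m{M}$. Everything else is routine.
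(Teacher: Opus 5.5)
Your proposal is correct and follows the paper's proof. You use the same splitting of $\m{x}^{t+1}-\m{M}\m{x}^{t+1}$ into an inherited part and a local-deviation part (the paper writes the first as $(\m{W}^{K_c}-\m{M})(\m{x}^t-\m{M}\m{x}^t)$, which equals yours since $(\m{W}^{K_c}-\m{M})\m{M}=0$), then Young's inequality with parameter $\tau$, the $\sigma^{K_c}$ contraction from Lemma~\ref{property W}, and the summed bounded-deviation condition. Your explicit justification that $\Vert(\m{W}^{K_c}-\m{M})\m{y}\Vert\le\sigma^{K_c}\Vert\m{y}\Vert$ for arbitrary $\m{y}$ fills in a detail the paper leaves implicit.
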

\begin{proof}
	Recall the relation that $\m{x}^{t+1}-\m{M}\m{x}^{t+1}=(\m{W}^{K_c}-\m{M})(\m{x}^{t}-\m{M}\m{x}^t)+(\m{W}^{K_c}-\m{M})(\m{x}^{t+1/2}-\m{x}^t)$. Applying Young's inequality with  parameter $\tau >0$, Lemma \ref{property W}, and \eqref{as3.2} directly leads to the stated result.
\end{proof}

With the recursion relationships of the objective function, gradient tracking error, and consensus error respectively established in Lemmas \ref{lem:descent_lemma}-\ref{lem:consensus_error}, we are now ready to state the convergence theorem.
\begin{theorem}\label{RIA_conv}
	Suppose that Assumptions \ref{as0}, \ref{as1}, \ref{as_network}, and \ref{IC} hold. Let $\{\m{x}^t\}$ be the sequence generated by the RIA framework and fix the positive parameters introduced by Young's inequality as $\eta = \tau = \frac{1-\sigma^{2K_c}}{2\sigma^{2K_c}}$, $b = \frac{4L}{n(1-\sigma^{2K_c})}$, and $d = \frac{64(1+\sigma^{2K_c})\sigma^{2K_c} L}{n (1-\sigma^{2K_c})^3}$. If the parameters $\gamma_1$ and $\gamma_2$ in Assumption \ref{IC} are selected to ensure 
	\begin{equation}\label{convergence-condition}
			 \gamma_1 - \gamma_2 \bigg( \frac{L}{2} + \frac{2L^2}{n(1-\sigma^{2K_c})} 
		 +\frac{64\sigma^{2K_c} L^2}{n(1-\sigma^{2K_c})^3} + \frac{3n(1-\sigma^{2K_c})}{16} \bigg) >0,
	\end{equation}
	then the RIA framework achieves the convergence rate:
	\begin{equation}\label{convergence-rate}
		\min_{0 \le t \le T} \left\{ \Vert\m{v}^t\Vert^2 + \Vert\m{v}^t - \m{M}\m{v}^t\Vert^2 + \Vert\m{x}^t - \m{M}\m{x}^t\Vert^2 \right\} \leq \frac{P(\m{x}^{0},\m{v}^{0})-\underline{F}}{c T},  
	\end{equation}
	where $P$ is the potential function explicitly defined as
	\begin{equation}\label{potential_explicit}
		P(\m{x}^{t},\m{v}^{t}) = F(\bar{\m{x}}^{t}) + \Vert\m{x}^{t}-\m{M}\m{x}^{t}\Vert^2 + \frac{(1-\sigma^{2K_c})^2}{16(1+\sigma^{2K_c})L^2}\Vert\m{v}^{t}-\m{M}\m{v}^{t}\Vert^2,
	\end{equation}
	and the constant $c > 0$ is defined as
	\begin{align}\label{c_new}
		c = \min \left\{ c_f, \frac{1-\sigma^{2K_c}}{8}, \frac{(1-\sigma^{2K_c})^3}{128(1+\sigma^{2K_c})L^2\sigma^{2K_c}} \right\},
	\end{align}
	with the coefficient $c_f$ given by
	\begin{align}\label{gamma_condition_new}
	c_f = &\frac{1}{n} \bigg[ \gamma_1 - \gamma_2 \bigg( \frac{L}{2} + \frac{2L^2}{n(1-\sigma^{2K_c})} \notag\\
& +\frac{32(1+\sigma^{2K_c})\sigma^{2K_c} L^2}{n(1-\sigma^{2K_c})^3} + \frac{n(1-\sigma^{2K_c})(1+2\sigma^{2K_c})}{16} \bigg) \bigg].
	\end{align}
\end{theorem}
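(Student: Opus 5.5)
The plan is the standard potential-function argument: sum the three one-step recursions, show that $P$ decreases by at least $c$ times the stationarity measure, then telescope.

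\textbf{Step 1: combine the recursions.} Add Lemma \ref{lem:descent_lemma} (inequality \eqref{term1}), the tracking-error recursion \eqref{term2}, and Lemma \ref{lem:consensus_error} (inequality \eqref{term3}). This bounds $P(\m{x}^{t+1},\m{v}^{t+1})$ by $P(\m{x}^{t},\m{v}^{t})$ plus coefficients on $\Vert\m{v}^t\Vert^2$, $\Vert\m{x}^t-\m{M}\m{x}^t\Vert^2$ and $\Vert\m{v}^t-\m{M}\m{v}^t\Vert^2$. With $\eta=\tau=\frac{1-\sigma^{2K_c}}{2\sigma^{2K_c}}$ we have
\[
(1+\tau)\sigma^{2K_c}=(1+\eta)\sigma^{2K_c}=\tfrac{1+\sigma^{2K_c}}{2},\qquad 1-(1+\tau)\sigma^{2K_c}=\tfrac{1-\sigma^{2K_c}}{2},\qquad 1+1/\eta=\tfrac{1+\sigma^{2K_c}}{1-\sigma^{2K_c}}.
\]
Hence the weight in \eqref{potential} reduces to $\frac{(1-\sigma^{2K_c})^2}{16(1+\sigma^{2K_c})L^2}$, which is \eqref{potential_explicit}.

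\textbf{Step 2: consensus error.} The total coefficient on $\Vert\m{x}^t-\m{M}\m{x}^t\Vert^2$ is
\[
-\bigl(1-(1+\tau)\sigma^{2K_c}\bigr)+\tfrac12\bigl(1-(1+\tau)\sigma^{2K_c}\bigr)+\tfrac{L}{2bn}=-\tfrac{1-\sigma^{2K_c}}{4}+\tfrac{L}{2bn}.
\]
With $b=\frac{4L}{n(1-\sigma^{2K_c})}$ this equals $-\frac{1-\sigma^{2K_c}}{8}$.

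\textbf{Step 3: tracking error.} The total coefficient on $\Vert\m{v}^t-\m{M}\m{v}^t\Vert^2$ is
\[
-\frac{(1-\sigma^{2K_c})^3}{64(1+\sigma^{2K_c})L^2\sigma^{2K_c}}+\frac{1}{2Ldn}.
\]
The stated $d$ makes $\frac{1}{2Ldn}$ exactly half of the negative term, so the net coefficient is $-\frac{(1-\sigma^{2K_c})^3}{128(1+\sigma^{2K_c})L^2\sigma^{2K_c}}$.

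\textbf{Step 4: tracker norm.} The total coefficient on $\Vert\m{v}^t\Vert^2$ collects three pieces:
\begin{itemize}
\item $-\frac{2\gamma_1-L\gamma_2(2+d+b)}{2n}$ from Lemma \ref{lem:descent_lemma};
\item $\frac{1}{8}\cdot\frac{1-\sigma^{2K_c}}{2}\gamma_2$ from \eqref{term2};
\item $(1+1/\tau)\sigma^{2K_c}\gamma_2=\frac{(1+\sigma^{2K_c})\sigma^{2K_c}}{1-\sigma^{2K_c}}\cdot 2\gamma_2$ from \eqref{term3}.
\end{itemize}
Substituting $b$ and $d$ should reproduce exactly $-c_f$ in \eqref{gamma_condition_new}. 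This step is the main obstacle. The bookkeeping is delicate, because the factor $n$ inside $b$ and $d$ interacts with the $1/n$ prefactor. Moreover, the third contribution is not small as $\sigma\to0$ relative to $n$-scaled terms, so the grouping into $\frac{n(1-\sigma^{2K_c})(1+2\sigma^{2K_c})}{16}$ must be verified carefully.

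I would then check positivity. Condition \eqref{convergence-condition} is a sufficient version of $c_f>0$, using $(1+\sigma^{2K_c})\le 2$ and $1+2\sigma^{2K_c}\le 3$ to bound the bracketed factors. So \eqref{convergence-condition} implies $c_f>0$, and hence $c>0$.

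\textbf{Step 5: telescope.} With Steps 2--4 we obtain
\[
P(\m{x}^{t+1},\m{v}^{t+1})\le P(\m{x}^{t},\m{v}^{t})-c\bigl(\Vert\m{v}^t\Vert^2+\Vert\m{v}^t-\m{M}\m{v}^t\Vert^2+\Vert\m{x}^t-\m{M}\m{x}^t\Vert^2\bigr),
\]
with $c$ the minimum of the three negative coefficients, as in \eqref{c_new}. Sum this over $t=0,\dots,T-1$. Since the two error terms in $P$ are nonnegative, $P\ge F(\bar{\m{x}}^T)\ge\underline{F}$ by Assumption \ref{as0}. Bounding the minimum of the summand by its average over $T$ terms then gives \eqref{convergence-rate}.
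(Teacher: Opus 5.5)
Your scheme (add \eqref{term1}, \eqref{term2}, \eqref{term3}, fix $\eta,\tau,b,d$, telescope) is the paper's, and Steps 2, 3 and 5 are correct, but Steps 1 and 4 fail. Write $s:=\sigma^{2K_c}$. In Step 1, substituting $\eta=\tau=\frac{1-s}{2s}$ into the weight of \eqref{potential} gives $\frac{(1-s)/2}{16\cdot\frac{1+s}{1-s}L^2s}=\frac{(1-s)^2}{32(1+s)L^2s}$, not $\frac{(1-s)^2}{16(1+s)L^2}$. The two agree only when $s=1/2$. Your Steps 2 and 3 in fact use the former weight; that is where $\frac12(1-(1+\tau)s)$ and $\frac{(1-s)^3}{64(1+s)L^2s}$ come from. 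So the function you telescope is not the $P$ of \eqref{potential_explicit}. You also cannot simply switch to the explicit weight. With it, the net coefficient of $\Vert\m{x}^t-\m{M}\m{x}^t\Vert^2$ becomes $-(1-s)(3-4s)/8$, which is nonnegative for $s\geq 3/4$.

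Step 4 is a genuine gap, and it cannot be closed with these constants. Your three pieces are the right ones, although $(1+1/\tau)s\gamma_2=\frac{(1+s)s}{1-s}\gamma_2$ without your factor $2$. Note also that $\frac{L\gamma_2(2+d+b)}{2n}$ puts $L$, not $L/2$, into the bracket. The three pieces therefore sum to $-c_f'\Vert\m{v}^t\Vert^2$ with
\[
c_f'=\frac{1}{n}\Big[\gamma_1-\gamma_2\Big(L+\frac{2L^2}{n(1-s)}+\frac{32(1+s)sL^2}{n(1-s)^3}+\frac{n(1-s)}{16}+\frac{ns(1+s)}{1-s}\Big)\Big].
\]
This bracket exceeds the one in \eqref{gamma_condition_new} by $\frac L2+ns\big(\frac{1+s}{1-s}-\frac{1-s}{8}\big)>0$, so $c_f'<c_f$. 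Moreover, \eqref{convergence-condition} does not imply $c_f'>0$: for example, with $s=1/2$ and $n\geq 8L$ the bracket of $c_f'$ exceeds that of \eqref{convergence-condition}.

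The paper's proof reaches the stated $c_f$ only through three bookkeeping slips. Its list of $\Vert\m{v}^t\Vert^2$ contributions counts the tracker term of \eqref{term2} twice: once as $\frac18(1-(1+\tau)s)\gamma_2$, and once as $\frac{(1-s)^2}{16(1+s)L^2}\cdot 2(1+1/\eta)sL^2\gamma_2=\frac{s(1-s)}{8}\gamma_2$, which is where the factor $1+2s$ comes from. It omits the term $(1+1/\tau)s\gamma_2$ of \eqref{term3}. It writes $\frac{L\gamma_2}{2n}$ where $\frac{L\gamma_2}{n}$ is needed. In addition, its $c_v$ uses the weight of \eqref{potential}, not that of \eqref{potential_explicit}. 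So the obstacle you flagged is real in the paper's argument as well.

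What the three lemmas do deliver is \eqref{convergence-rate} with the following changes:
\begin{itemize}
\item $P$ carries the weight $\frac{(1-s)^2}{32(1+s)L^2s}$ on $\Vert\m{v}^t-\m{M}\m{v}^t\Vert^2$;
\item $c=\min\{c_f',\frac{1-s}{8},\frac{(1-s)^3}{128(1+s)L^2s}\}$;
\item the hypothesis is $c_f'>0$ in place of \eqref{convergence-condition}.
\end{itemize}
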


\begin{proof}
	By summing the recursive upper bounds established in Lemma \ref{lem:descent_lemma}, Lemma 3.2, and Lemma 3.3, and applying the explicit formulation of the potential function $P(\m{x}^t, \m{v}^t)$ defined in \eqref{potential_explicit}, we obtain the following descent relation:
	\begin{align}\label{P-rec_new}
		P(\m{x}^{t+1}, \m{v}^{t+1}) \leq P(\m{x}^t, \m{v}^t) - c_f \Vert\m{v}^t\Vert^2 - c_x \Vert\m{x}^t - \m{M}\m{x}^t\Vert^2 - c_v \Vert\m{v}^t - \m{M}\m{v}^t\Vert^2,
	\end{align}
	where the structural tracking coefficients $c_f$, $c_x$, and $c_v$ are defined as
	\begin{align*}
		c_f &= \frac{2\gamma_1 - L\gamma_2(2+d+b)}{2n} - \frac{1}{8}(1-(1+\tau)\sigma^{2K_c})\gamma_2 \\
		& \quad - \left(\frac{(1-\sigma^{2K_c})^2}{16(1+\sigma^{2K_c})L^2}\right) \cdot 2(1+1/\eta)\sigma^{2K_c}L^2\gamma_2, \\
		c_x &= \frac{1}{2}(1-(1+\tau)\sigma^{2K_c}) - \frac{L}{2bn}, \\
		c_v &= \frac{(1-(1+\tau)\sigma^{2K_c})(1-(1+\eta)\sigma^{2K_c})}{16(1+1/\eta)L^2\sigma^{2K_c}} - \frac{1}{2Ldn}.
	\end{align*}
	 Recalling our parameter setting $\eta = \tau = \frac{1-\sigma^{2K_c}}{2\sigma^{2K_c}}$, we have $1-(1+\tau)\sigma^{2K_c} = \frac{1-\sigma^{2K_c}}{2}$, $1-(1+\eta)\sigma^{2K_c} = \frac{1-\sigma^{2K_c}}{2}$, and $1+1/\eta = \frac{1+\sigma^{2K_c}}{1-\sigma^{2K_c}}$. Substituting the choices of $b = \frac{4L}{n(1-\sigma^{2K_c})}$ and $d = \frac{64(1+\sigma^{2K_c})\sigma^{2K_c} L}{n (1-\sigma^{2K_c})^3}$ into $c_x$ and $c_v$, we obtain
	\begin{align*}
		c_x &= \frac{1-\sigma^{2K_c}}{4} - \frac{1-\sigma^{2K_c}}{8} = \frac{1-\sigma^{2K_c}}{8}, \\
		c_v &= \frac{(1-\sigma^{2K_c})^3}{64(1+\sigma^{2K_c})L^2\sigma^{2K_c}} - \frac{1}{2L\left(\frac{64(1+\sigma^{2K_c})\sigma^{2K_c} L}{n(1-\sigma^{2K_c})^3}\right)n} \\
		&= \frac{(1-\sigma^{2K_c})^3}{64(1+\sigma^{2K_c})L^2\sigma^{2K_c}} - \frac{(1-\sigma^{2K_c})^3}{128(1+\sigma^{2K_c})L^2\sigma^{2K_c}} 
		= \frac{(1-\sigma^{2K_c})^3}{128(1+\sigma^{2K_c})L^2\sigma^{2K_c}}.
	\end{align*}
	
	Now we simplify the coefficient $c_f$ and rewrite $c_f$ as
	\begin{align*}
		c_f &= \frac{\gamma_1}{n} - \frac{L\gamma_2}{2n} - \frac{L\gamma_2 b}{2n} - \frac{L\gamma_2 d}{2n} - \frac{1-\sigma^{2K_c}}{16}\gamma_2 - \frac{(1-\sigma^{2K_c})^2}{8(1+\sigma^{2K_c})} \cdot \left(\frac{1+\sigma^{2K_c}}{1-\sigma^{2K_c}}\right)\sigma^{2K_c}\gamma_2 \\
		&= \frac{\gamma_1}{n} - \frac{L\gamma_2}{2n} - \frac{L\gamma_2 b}{2n} - \frac{L\gamma_2 d}{2n} - \frac{1-\sigma^{2K_c}}{16}\gamma_2 - \frac{\sigma^{2K_c}(1-\sigma^{2K_c})}{8}\gamma_2 \\
		&= \frac{1}{n} \left[ \gamma_1 - \gamma_2 \left( \frac{L}{2} + \frac{L b}{2} + \frac{L d}{2} + \frac{n(1-\sigma^{2K_c})(1+2\sigma^{2K_c})}{16} \right) \right].
	\end{align*}
	Substituting the choices of $b = \frac{4L}{n(1-\sigma^{2K_c})}$ and $d = \frac{64(1+\sigma^{2K_c})\sigma^{2K_c} L}{n (1-\sigma^{2K_c})^3}$ into the above relation gives
	\begin{align*}
		c_f = &\frac{1}{n} \bigg[ \gamma_1 - \gamma_2 \bigg( \frac{L}{2} + \frac{2L^2}{n(1-\sigma^{2K_c})} \\
		& +\frac{32(1+\sigma^{2K_c})\sigma^{2K_c} L^2}{n(1-\sigma^{2K_c})^3} + \frac{n(1-\sigma^{2K_c})(1+2\sigma^{2K_c})}{16} \bigg) \bigg].
	\end{align*}
	Finally, an accumulation of \eqref{P-rec_new} from $t = 0$ to $T-1$ completes the proof.
\end{proof}

Now we verify the two RIA conditions for the fixed-local-update gradient-tracking schemes \cite{berahas2024balancing,nguyen2023performance,ge2023gradient,huang2023computation} listed in Remark \ref{remark_2.2}, whose local update recursion takes the form
	\begin{align*}
	\m{x}_i^{t,k+1} = \m{x}_i^{t,k} - \alpha \m{H}_i^{t,k}\nabla\phi_i^t(\m{x}_i^{t,k}), \quad k=0,\dots,K_g-1.
\end{align*}
Assume the matrices $\{\m{H}_i^{t,k}\}$ have uniform eigenvalue bounds, i.e., $\psi \m{I} \preceq \m{H}_i^{t,k} \preceq \Psi \m{I}$, where $0<\psi \leq \Psi$.

We first verify the bounded deviation condition that $\Vert\mathbf{x}_i^{t,K_g} - \mathbf{x}_i^{t}\Vert^2 \leq \gamma_2 \Vert\mathbf{v}_i^t\Vert^2$ for some $\gamma_2 > 0$. Recall the definition of the local surrogate function in the surrogate-based framework:
\begin{equation*}
	\phi_i^t(\mathbf{z}) = f_i(\mathbf{z}) + \langle \mathbf{v}_i^t - \nabla f_i(\mathbf{x}_i^t), \mathbf{z} \rangle.
\end{equation*}
Note that the gradient of this surrogate is $\nabla \phi_i^t(\mathbf{z}) = \nabla f_i(\mathbf{z}) + \mathbf{v}_i^t - \nabla f_i(\mathbf{x}_i^t)$. Crucially, the gradient tracking update $\mathbf{v}_i^{t,k} = \mathbf{v}_i^{t,k-1} + \nabla f_i(\mathbf{x}_i^{t,k}) - \nabla f_i(\mathbf{x}_i^{t,k-1})$ with $\mathbf{v}_i^{t,0} = \mathbf{v}_i^t$ ensures that
\begin{equation*}
	\mathbf{v}_i^{t,k} = \nabla \phi_i^t(\mathbf{x}_i^{t,k}).
\end{equation*}
By recursion, the updating formula for $\m{v}_i^{t,k}$ becomes
\begin{align}\label{vtk}
	\m{v}_i^{t,k}=\m{v}_i^{t}+\nabla f_i(\m{x}_i^{t,k})-\nabla f_i(\m{x}_i^{t}).
\end{align}
Using the above relation and the $L$-Lipschitz continuity of $\nabla f_i$, we have for $k \ge 1$ that
\begin{align*}
	&\quad \Vert\m{x}_i^{t,k}-\m{x}_i^t\Vert = \left\Vert \alpha\sum_{j=0}^{k-1} \m{H}_i^{t,j}\m{v}_i^{t,j} \right\Vert \leq \alpha \Psi \sum_{j=0}^{k-1}\Vert\m{v}_i^{t,j}\Vert\\
	&\leq \alpha \Psi \sum_{j=0}^{k-1} \left( \Vert\m{v}_i^t\Vert + \Vert\nabla f_i(\m{x}_i^{t,j})-\nabla f_i(\m{x}_i^{t})\Vert \right) 
	\leq k \alpha \Psi \Vert\m{v}_i^t\Vert + \alpha L \Psi\sum_{j=1}^{k-1} \Vert\m{x}_i^{t,j}-\m{x}_i^t\Vert.
\end{align*}
We proceed by induction. For $k=1$, $\Vert\m{x}_i^{t,1}-\m{x}_i^t\Vert = \alpha \Vert\m{H}_i^{t,0}\m{v}_i^t\Vert \leq \alpha\Psi\Vert\m{v}_i^t\Vert$, which satisfies the bound. Assume the hypothesis holds for all $j < k$. Then,
\begin{align*}
	&\quad \Vert\m{x}_i^{t,k}-\m{x}_i^t\Vert \leq k \alpha \Psi \Vert\m{v}_i^t\Vert + \alpha L \Psi\sum_{j=1}^{k-1} (2\alpha \Psi j \Vert\m{v}_i^t\Vert) \\
	&= \alpha \Psi k\Vert\m{v}_i^t\Vert + 2\alpha^2 L \Psi^2 \frac{k(k-1)}{2} \Vert\m{v}_i^t\Vert 
	= \alpha \Psi k \left[1 + \alpha L \Psi (k-1)\right] \Vert\m{v}_i^t\Vert.
\end{align*}
Provided that $\alpha \leq \frac{1}{L\Psi K_g}$, we have $1 + \alpha L \Psi (k-1) \leq 2$. Thus, 
\begin{equation}\label{eq 3.25}
	\Vert\mathbf{x}_i^{t,k} - \mathbf{x}_i^t\Vert \leq 2\alpha \Psi k \Vert\mathbf{v}_i^t\Vert.
\end{equation}
Taking $k=K_g$ and squaring both sides of \eqref{eq 3.25}, we obtain
\begin{align*}
	\Vert\mathbf{x}_i^{t,K_g} - \mathbf{x}_i^t\Vert^2 \leq 4\alpha^2 \Psi^2 K_g^2 \Vert\mathbf{v}_i^t\Vert^2.
\end{align*}
Thus, the bounded deviation condition holds with
\begin{equation}\label{val_gamma2}
	\gamma_2 = 4\alpha^2 \Psi^2 K_g^2.
\end{equation}

Then, we verify the sufficient descent condition that $\phi_i^t(\mathbf{x}_i^{t,K_g}) \leq \phi_i^t(\mathbf{x}_i^{t}) - \gamma_1 \Vert\mathbf{v}_i^t\Vert^2$ for some constant $\gamma_1 > 0$.  Since $\nabla f_i$ is $L$-Lipschitz continuous, the surrogate function $\phi_i^t$ is $L$-smooth. Applying the descent lemma for each inner step $k=0,\dots,K_g-1$ with the update rule $\mathbf{x}_i^{t,k+1} = \mathbf{x}_i^{t,k} - \alpha \mathbf{H}_i^{t,k}\mathbf{v}_i^{t,k}$ yields
\begin{align*}
	\phi_i^t(\mathbf{x}_i^{t,k+1}) &\leq \phi_i^t(\mathbf{x}_i^{t,k}) - \alpha \langle \mathbf{v}_i^{t,k}, \mathbf{H}_i^{t,k}\mathbf{v}_i^{t,k} \rangle + \frac{L}{2}\alpha^2 \Vert\mathbf{H}_i^{t,k}\mathbf{v}_i^{t,k}\Vert^2 \\
	&\leq \phi_i^t(\mathbf{x}_i^{t,k}) - \left( \alpha \psi - \frac{L}{2}\alpha^2 \Psi^2 \right) \Vert\mathbf{v}_i^{t,k}\Vert^2,
\end{align*}
where we utilize the uniform eigenvalue bounds $\psi \mathbf{I} \preceq \mathbf{H}_i^{t,k} \preceq \Psi \mathbf{I}$. Summing the above inequality over $k=0$ to $K_g-1$, we obtain
\begin{equation}\label{eq:total_descent}
	\phi_i^t(\mathbf{x}_i^{t,K_g}) \leq \phi_i^t(\mathbf{x}_i^{t}) - c_\alpha \sum_{k=0}^{K_g-1} \Vert\mathbf{v}_i^{t,k}\Vert^2,
\end{equation}
where $c_\alpha = \alpha \psi - \frac{L}{2}\alpha^2 \Psi^2 > 0$ for $\alpha < \frac{2\psi}{L\Psi^2}$. 
Using \eqref{vtk}, Lemma \ref{important}, and the $L$-Lipschitz continuity of each $\nabla f_i$, we have
\begin{equation*}
	\Vert\mathbf{v}_i^{t,k}\Vert^2 \geq \frac{1}{2}\Vert\mathbf{v}_i^t\Vert^2 - L^2 \Vert\mathbf{x}_i^{t,k} - \mathbf{x}_i^t\Vert^2.
\end{equation*}
Summing over $k=0$ to $K_g-1$ and substituting \eqref{eq 3.25}, we obtain
\begin{align*}
	\sum_{k=0}^{K_g-1} \Vert\mathbf{v}_i^{t,k}\Vert^2 \geq \frac{K_g}{2}\Vert\mathbf{v}_i^t\Vert^2 - L^2 \sum_{k=0}^{K_g-1} 4\alpha^2\Psi^2 k^2 \Vert\mathbf{v}_i^t\Vert^2 
	\geq \left( \frac{K_g}{2} - \frac{4}{3}\alpha^2 L^2 \Psi^2 K_g^3 \right) \Vert\mathbf{v}_i^t\Vert^2,
\end{align*}
where the last inequality uses the bound $\sum_{k=0}^{K_g-1} k^2 \leq K_g^3/3$. By restricting the stepsize such that $\frac{4}{3}\alpha^2 L^2 \Psi^2 K_g^3 \leq \frac{K_g}{4}$, which mandates $\alpha \leq \frac{\sqrt{3}}{4 L \Psi K_g}$, then we obtain $\sum_{k=0}^{K_g-1} \Vert\mathbf{v}_i^{t,k}\Vert^2 \geq \frac{K_g}{4}\Vert\mathbf{v}_i^t\Vert^2$. Substituting this back into \eqref{eq:total_descent} establishes the sufficient descent condition with
\begin{equation}\label{val_gamma1}
	\gamma_1 = \frac{K_g}{4} \left( \alpha \psi - \frac{L}{2}\alpha^2 \Psi^2 \right).
\end{equation}

As presented in Theorem \ref{RIA_conv}, the RIA framework should satisfy the condition \eqref{convergence-condition} for convergence. Substituting \eqref{val_gamma1} and \eqref{val_gamma2} back yields
\begin{align}
	&\frac{K_g}{4}\left(\alpha \psi - \frac{L}{2}\alpha^2 \Psi^2\right) - 4\alpha^2 \Psi^2 K_g^2 \bigg( \frac{L}{2} + \frac{2L^2}{n(1-\sigma^{2K_c})} \notag\\
	&+\frac{64\sigma^{2K_c} L^2}{n(1-\sigma^{2K_c})^3} + \frac{3n(1-\sigma^{2K_c})}{16} \bigg) >0,\label{rem3.3 eq1}
\end{align}
which implies that the convergence stepsize bound evaluates to
\begin{equation}\label{rem3.3_final_stepsize}
	\alpha < \frac{\psi}{\frac{L\Psi^2}{2} + 16\Psi^2 K_g \big( \frac{L}{2} + \frac{2L^2}{n(1-\sigma^{2K_c})} +\frac{64\sigma^{2K_c} L^2}{n(1-\sigma^{2K_c})^3} + \frac{3n(1-\sigma^{2K_c})}{16} \big)} = \mathcal{O}\left(\frac{1}{K_g}\right).
\end{equation}
This derivation verifies the two RIA conditions for the fixed-local-update gradient-tracking schemes considered above. On the other hand, it yields a sufficient stepsize bound scaling as $\mathcal{O}(1/K_g)$ to counteract the node deviation accumulated by fixed multiple local updates. 
This phenomenon is closely related to the limitations observed in federated learning under heterogeneous (non-IID) settings. Specifically, as demonstrated in the discussion of the SCAFFOLD algorithm \cite{karimireddy2020scaffold}, traditional local update methods (e.g., FedAvg \cite{Li2020On}) may require much smaller stepsizes to counteract the client drift caused by data heterogeneity. Inspired by their counterexample, we present the following proposition to establish the worst-case tightness of the $\mathcal{O}(1/K_g)$ stepsize scaling for the fixed-local-update scheme considered above.

\begin{proposition}[Worst-Case Tightness of the $\mathcal{O}(1/K_g)$ Stepsize Scaling]\label{prop:tightness}
	Consider the fixed-local-update gradient-tracking scheme in Algorithm \ref{alg:Framwork1} with $\m{H}_i^{t,k}=\m{I}$, $K_c=1$, and exact averaging over a two-node network. For every integer $K_g\geq4$ and every stepsize satisfying
	\begin{equation}\label{eq:necessary_stepsize}
		\alpha>\frac{8}{LK_g},
	\end{equation}
	there exists a convex quadratic decentralized optimization problem with $L$-Lipschitz continuous local gradients such that the generated iterates diverge geometrically from a suitable initialization. Consequently, any stepsize that guarantees convergence uniformly over this problem class must satisfy $\alpha=\mathcal{O}(1/(LK_g))$. Together with the sufficient bound derived above, this shows that the dependence on $K_g$ is tight up to constant factors for this fixed-local-update scheme.
\end{proposition}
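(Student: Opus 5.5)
The plan is to exhibit, for each $K_g\ge4$ and each $\alpha>8/(LK_g)$, an explicit scalar ($p=1$) two-node convex quadratic instance on which the iteration map is linear with a multiplier of modulus larger than $1$. With exact averaging ($\tilde{\m{W}}=\frac12\m{1}\m{1}\tr$, so $\m{W}=\m{M}$ and $\sigma=0$), the dynamics collapse to a scalar recursion that can be computed in closed form. I would split into two regimes according to whether $\alpha L\le 2$ or $\alpha L>2$. Since $K_g\ge4$ gives $8/(LK_g)\le 2/L$, both regimes occur inside the range \eqref{eq:necessary_stepsize}.

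\emph{Regime $\alpha L>2$ (homogeneous instance).} Take $f_1=f_2=\frac L2 z^2$ and a consensual start $x_1^0=x_2^0=x^0\neq0$ with $v_i^0=g_i^0$. By \eqref{vtk}, each inner step is plain gradient descent on $\frac L2 z^2$, so $x_i^{t,K_g}=q^{K_g}x^t$ with $q=1-\alpha L$. Averaging keeps the nodes identical, and the invariant \eqref{vequalg} gives $v_i^{t+1}=\bar g^{t+1}$. Hence $x^{t+1}=q^{K_g}x^t$. Since $|q|=\alpha L-1>1$, the iterates diverge geometrically.

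\emph{Regime $\alpha L\le2$ (heterogeneous instance, in the spirit of the SCAFFOLD drift counterexample).} Take $f_1(z)=\frac L2 z^2$ and $f_2\equiv0$. Both are convex with $L$-Lipschitz gradients, and $F=\frac L4 z^2$. The steps are as follows.
\begin{enumerate}
\item Start consensual with $v_i^0=g_i^0$. I first check that exact averaging together with \eqref{vequalg} forces, for all $t$, that $x_1^t=x_2^t=:x^t$ and $v_1^t=v_2^t=\bar g^t=\frac L2 x^t$.
\item Node $2$ has surrogate gradient $\nabla\phi_2^t\equiv v^t$. Hence $x_2^{t,K_g}=x^t-\alpha K_g v^t$.
\item For node $1$, set $e_k=x_1^{t,k}-x^t$. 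Then $e_{k+1}=q e_k-\alpha v^t$ with $e_0=0$, which gives $e_{K_g}=-\frac{v^t}{L}\left(1-q^{K_g}\right)$.
\item Averaging the two outputs yields $x^{t+1}=m\,x^t$ with
\begin{equation*}
m=1-\tfrac14\left(1-q^{K_g}\right)-\tfrac14\alpha L K_g .
\end{equation*}
\end{enumerate}
Since $|q|\le1$ in this regime, we have $1-q^{K_g}\ge0$. Therefore $m\le 1-\frac14\alpha LK_g<1-2=-1$ whenever $\alpha LK_g>8$. So $|m|>1$ and $|x^t|=|m|^t|x^0|$ diverges geometrically; the trackers $v^t=\frac L2x^t$ diverge as well. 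This is precisely where the constant $8$ in \eqref{eq:necessary_stepsize} comes from.

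The "consequently" part then follows directly. Any $\alpha$ guaranteeing convergence on all such instances must satisfy $\alpha\le 8/(LK_g)$, and comparing with \eqref{rem3.3_final_stepsize} gives tightness up to constants.

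The main obstacle I expect is bookkeeping rather than analysis. First, the last inner step of Algorithm \ref{alg:Framwork1} uses $\m{g}^{t+1}$ (the gradient after communication) in Step 11. I need to confirm that, with exact averaging, this still yields $v_i^{t+1}=\bar g^{t+1}$; it does, since $\m{M}(\m{v}^t+\m{g}^{t+1}-\m{g}^t)=\m{M}\m{g}^{t+1}$. Second, the case split is needed because for $\alpha L>2$ and even $K_g$ the term $q^{K_g}$ is large and positive, which spoils the sign argument on the heterogeneous instance. That is why the homogeneous instance handles that regime separately.
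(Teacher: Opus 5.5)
Your construction is the paper's own. You use the same homogeneous instance for $\alpha L>2$ and the same heterogeneous instance $f_1=\frac L2(z-c)^2$, $f_2\equiv0$ (with $c=0$) for $\alpha L\le2$. You also obtain the same multiplier $m=\frac14[3+(1-a)^{K_g}-K_ga]$.

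There is one step that is wrong as stated. In the heterogeneous case, your Step 1 claims $v_1^t=v_2^t=\frac L2x^t$ for \emph{all} $t$. At $t=0$, however, Algorithm \ref{alg:Framwork1} forces $\m{v}^0=\m{g}^0=(Lx^0,0)$, which is not consensual. The consensus of the trackers only holds from $t\geq1$, which is why the paper states it for $t\ge1$. In the first outer iteration node $1$ runs plain gradient descent on $\frac L2z^2$ and node $2$ does not move. Hence $x^1=\frac{1+q^{K_g}}{2}x^0$ and thereafter $x^t=m^{t-1}x^1$, not $m^tx^0$.

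For $\alpha L<2$, or $\alpha L=2$ with $K_g$ even, this only changes a constant. The problem is the boundary point $\alpha L=2$ with $K_g$ odd. It lies inside the range \eqref{eq:necessary_stepsize} for every odd $K_g\geq5$, and your homogeneous case does not cover it because $|q|=1$ there. At this point $q^{K_g}=-1$, so $x^1=0$, and all subsequent iterates and trackers are identically zero. Your consensual initialization therefore produces no divergence there.

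The repair is one line: choose the initialization so that the first averaged iterate is nonzero, as the paper does. For instance, $x_1^0=0$ and $x_2^0\neq0$ give $\m{g}^0=\m{v}^0=\m{0}$. Then neither node moves in the first outer iteration, $x^1=x_2^0/2\neq0$, and $|x^t|=|m|^{t-1}|x^1|$ diverges geometrically for all $0<\alpha L\le2$ with $\alpha LK_g>8$. Your treatment of Step 11, the closed form for $m$, and the homogeneous case are otherwise correct.
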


\begin{proof}
	Let $a=\alpha L$ and consider a fully connected two-node network with the mixing matrix $\tilde{\m{W}}=\frac{1}{2}\m{1}_2\m{1}_2\tr$. We distinguish two cases.
	
	\textbf{Case 1: $0<a\leq2$.}
	Consider the one-dimensional local objective functions
	\begin{equation}\label{eq:lower_instance_heterogeneous}
		f_1(x)=\frac{L}{2}(x-c)^2,\qquad f_2(x)=0,
	\end{equation}
	where $c\in\mathbb{R}$ is a constant. The corresponding global objective is $F(x)=\frac{L}{4}(x-c)^2$, which is strongly convex and bounded below. Because the mixing matrix performs exact averaging and the gradient tracker is initialized by $\m{v}^0=\m{g}^0$, after the first outer iteration it holds that
	\begin{equation}\label{eq:lower_invariant}
		x_1^t=x_2^t=x^t,\qquad v_1^t=v_2^t=\nabla F(x^t)=\frac{L}{2}(x^t-c),\qquad t\geq1.
	\end{equation}
	This relation is preserved by all subsequent outer iterations. Let $e^t=x^t-c$. During the local-update phase, the surrogate gradient at node one is
	\begin{equation*}
		\nabla\phi_1^t(z)=L(z-c)-\frac{L}{2}e^t.
	\end{equation*}
	Therefore, its local recursion satisfies
	\begin{equation*}
		x_1^{t,k+1}-c-\frac{e^t}{2}=(1-a)\left(x_1^{t,k}-c-\frac{e^t}{2}\right),
	\end{equation*}
	which yields
	\begin{equation}\label{eq:lower_node1}
		x_1^{t,K_g}=c+\frac{e^t}{2}+(1-a)^{K_g}\frac{e^t}{2}.
	\end{equation}
	Furthermore, the surrogate gradient at node two is constant, i.e., $\nabla\phi_2^t(z)=\frac{L}{2}e^t$. Thus,
	\begin{equation}\label{eq:lower_node2}
		x_2^{t,K_g}=c+e^t-\frac{K_ga}{2}e^t.
	\end{equation}
	Averaging \eqref{eq:lower_node1} and \eqref{eq:lower_node2} gives the error dynamics
	\begin{equation}\label{eq:lower_error_dynamics}
		e^{t+1}=\underbrace{\frac{1}{4}\left[3+(1-a)^{K_g}-K_ga\right]}_{\mathcal{M}(a,K_g)}e^t.
	\end{equation}
	Since $0<a\leq2$, we have $(1-a)^{K_g}\leq1$. Under \eqref{eq:necessary_stepsize}, $K_ga>8$, and hence
	\begin{equation*}
		\mathcal{M}(a,K_g)<\frac{1}{4}(3+1-8)=-1.
	\end{equation*}
	Choosing an initialization such that the error after the first averaging step is nonzero, the error sequence generated by \eqref{eq:lower_error_dynamics} diverges geometrically.
	
	\textbf{Case 2: $a>2$.}
	We consider instead the homogeneous quadratic objectives
	\begin{equation}\label{eq:lower_instance_homogeneous}
		f_1(x)=f_2(x)=\frac{L}{2}x^2.
	\end{equation}
	Starting from a consensual initialization $x_1^0=x_2^0=x^0\neq0$, the two nodes follow the same local trajectory. After $K_g$ local gradient steps, we have
	\begin{equation*}
		x_i^{t,K_g}=(1-a)^{K_g}x^t,\qquad i=1,2.
	\end{equation*}
	The averaging step does not alter the common iterate, so
	\begin{equation}\label{eq:lower_homogeneous_dynamics}
		x^{t+1}=(1-a)^{K_g}x^t.
	\end{equation}
	Since $a>2$, we have $|1-a|>1$, and the iterates in \eqref{eq:lower_homogeneous_dynamics} diverge geometrically.
	
	Combining the two cases, for every $K_g\geq4$ and every $\alpha>8/(LK_g)$, there exists an $L$-smooth convex quadratic instance for which the fixed-local-update scheme diverges. Therefore, a stepsize that guarantees convergence uniformly over all such instances must satisfy $\alpha=\mathcal{O}(1/(LK_g))$, which completes the proof.
\end{proof}

\begin{corollary}[Gradient complexity of fixed local updates]\label{cor:fixed_oracle}
Let
\[
	\mathcal{R}^t:=\|\m{v}^t\|^2+\|\m{v}^t-\m{M}\m{v}^t\|^2+\|\m{x}^t-\m{M}\m{x}^t\|^2.
\]
Fix the network and $K_c$, and choose $\alpha=\bar{\alpha}/K_g$, where $\bar{\alpha}>0$ is sufficiently small and independent of $K_g$. Then there exists $c_0>0$, independent of $K_g$, such that the fixed-local-update scheme satisfies
\[
	\min_{0\leq t<T}\mathcal{R}^t
	\leq\frac{P(\m{x}^0,\m{v}^0)-\underline F}{c_0T}.
\]
Consequently, to obtain $\mathcal{R}^t\leq\delta$, the total number of local gradient evaluations and communication rounds satisfy
\begin{equation}\label{eq:fixed_oracle}
	\mathcal{G}_{\rm fix}(\delta)=\mathcal{O}\left(\frac{nK_g}{\delta}\right),
	\qquad
	\mathcal{C}_{\rm fix}(\delta)=\mathcal{O}\left(\frac{K_c}{\delta}\right).
\end{equation}
\end{corollary}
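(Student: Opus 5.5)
The plan is to instantiate Theorem~\ref{RIA_conv} with the constants $\gamma_1,\gamma_2$ already verified for the fixed-local-update scheme in \eqref{val_gamma1} and \eqref{val_gamma2}, and then track how each quantity in the rate scales with $K_g$ once $\alpha=\bar\alpha/K_g$.

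\textbf{Step 1: the RIA conditions hold.} With $\alpha=\bar\alpha/K_g$ we have $\alpha K_g=\bar\alpha$. The stepsize restrictions used in the verification are $\alpha\le 1/(L\Psi K_g)$, $\alpha\le\sqrt3/(4L\Psi K_g)$ and $\alpha<2\psi/(L\Psi^2)$. Because $K_g\ge1$, all three hold as soon as
$$\bar\alpha\le\min\{\sqrt3/(4L\Psi),\,\psi/(L\Psi^2)\}.$$
This bound does not depend on $K_g$. Substituting into the verified constants gives
$$\gamma_2=4\bar\alpha^2\Psi^2,\qquad \gamma_1=\tfrac{\bar\alpha}{4}\big(\psi-\tfrac{L}{2}\bar\alpha\Psi^2/K_g\big)\ge\tfrac{\bar\alpha}{4}\big(\psi-\tfrac{L}{2}\bar\alpha\Psi^2\big).$$
So $\gamma_1$ is bounded below by a $K_g$-free positive quantity of order $\bar\alpha$, while $\gamma_2$ is of order $\bar\alpha^2$ with no $K_g$ dependence.

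\textbf{Step 2: a $K_g$-free rate constant.} Denote by $B$ the bracket in \eqref{convergence-condition}; it depends only on $L$, $n$, $\sigma$ and $K_c$. Condition \eqref{convergence-condition} then reads $\gamma_1-\gamma_2B>0$, which holds whenever
$$\tfrac{\bar\alpha}{4}\big(\psi-\tfrac{L}{2}\bar\alpha\Psi^2\big)>4\bar\alpha^2\Psi^2B.$$
This is true for every sufficiently small $\bar\alpha$, for instance
$$\bar\alpha\le\psi/\big(\Psi^2(L+32B)\big).$$
The bracket inside $c_f$ in \eqref{gamma_condition_new} is bounded by a similar $K_g$-free constant $B'$. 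Taking $\bar\alpha$ small enough that $\gamma_2B'\le\gamma_1/2$ gives $c_f\ge\gamma_1/(2n)$, which is a $K_g$-independent lower bound. The other two entries of the minimum in \eqref{c_new} involve only $\sigma$, $K_c$ and $L$. Hence
$$c\ge c_0:=\min\Big\{\tfrac{\bar\alpha}{8n}\big(\psi-\tfrac{L}{2}\bar\alpha\Psi^2\big),\ \tfrac{1-\sigma^{2K_c}}{8},\ \tfrac{(1-\sigma^{2K_c})^3}{128(1+\sigma^{2K_c})L^2\sigma^{2K_c}}\Big\}>0,$$
and \eqref{convergence-rate} yields the claimed bound. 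The potential $P(\m{x}^0,\m{v}^0)$ in \eqref{potential_explicit} also carries no $K_g$ dependence.

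\textbf{Step 3: complexity count.} Choosing $T=\lceil (P(\m{x}^0,\m{v}^0)-\underline F)/(c_0\delta)\rceil=\mathcal{O}(1/\delta)$ guarantees $\min_t\mathcal{R}^t\le\delta$. Each outer iteration costs each of the $n$ nodes $K_g$ gradient evaluations, namely $\m{g}^{t,1},\dots,\m{g}^{t,K_g-1}$ and $\m{g}^{t+1}$. This gives $nK_gT=\mathcal{O}(nK_g/\delta)$ gradient evaluations and $K_cT=\mathcal{O}(K_c/\delta)$ communication rounds in total.

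The main obstacle is the bookkeeping in Step 2. One has to check that no hidden factor of $K_g$ survives in $c_f$, which comes down to the cancellation of $K_g$ in $\gamma_1=\Theta(\alpha K_g)$ and $\gamma_2=\Theta(\alpha^2K_g^2)$ under $\alpha=\bar\alpha/K_g$. The same check must confirm that the smallness requirement on $\bar\alpha$ is uniform in $K_g$; the only place $K_g$ appears in these bounds is through the harmless factor $1/K_g\le1$.
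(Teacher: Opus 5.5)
Your proposal is correct and follows the same route as the paper's proof. You substitute $\alpha=\bar\alpha/K_g$ into \eqref{val_gamma1}--\eqref{val_gamma2} to obtain $\gamma_1=\bar\alpha\psi/4-L\bar\alpha^2\Psi^2/(8K_g)$ and $\gamma_2=4\bar\alpha^2\Psi^2$, conclude that the constant $c$ of Theorem~\ref{RIA_conv} is bounded below independently of $K_g$, and count $K_g$ gradient evaluations per node and $K_c$ communication rounds per outer iteration. Your version additionally makes explicit two things the paper leaves implicit: the stepsize restrictions used to verify the RIA conditions reduce to $K_g$-free thresholds on $\bar\alpha$, and the lower bound $c_0$ is given explicitly.
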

\begin{proof}
Substituting $\alpha=\bar{\alpha}/K_g$ into \eqref{val_gamma1}--\eqref{val_gamma2} gives
\[
	\gamma_1=\frac{\bar{\alpha}\psi}{4}
	-\frac{L\bar{\alpha}^2\Psi^2}{8K_g},
	\qquad
	\gamma_2=4\bar{\alpha}^2\Psi^2.
\]
Thus, for sufficiently small $\bar{\alpha}$, the coefficient $c$ in Theorem \ref{RIA_conv} is bounded below independently of $K_g$. Each outer iteration evaluates $K_g$ local gradients at every node and performs $K_c$ communication rounds, which gives \eqref{eq:fixed_oracle}. 
\end{proof}


\section{A new method with adaptive local updates and broadcasts}
The RIA framework in Section \ref{RIAsection} provides a sufficient convergence condition for the fixed-local-update gradient-tracking schemes considered above, leading to the worst-case $\mathcal{O}(1/K_g)$ stepsize scaling. In practice, multiplying $K_g$ by a certain factor does not actually require a corresponding proportional reduction in the stepsize to ensure convergence.
Moreover, employing fixed numbers of local updates and communication steps may be conservative because the appropriate computation-communication balance can vary with the optimization and consensus states. This motivates an adaptive method that adjusts local updates and broadcasts without requiring a prescribed system-cost model.

\subsection{Algorithm development}
Based on a quasi-Newton technique and double adaptive mechanism, a new Adaptive Decentralized Quasi-Newton (AdaDQN) method is proposed and its detailed procedure is given in Algorithm \ref{alg: Adpt. DNQN}.
The proposed algorithm adjusts the number of local updates and the broadcasts using locally available algorithmic states, aiming to avoid unnecessary computation and communication in practice while preserving the convergence guarantees in the presence of the resulting communication errors.

\begin{algorithm}[htbp]
	\caption{Adaptive Decentralized Quasi-Newton---AdaDQN}
	\label{alg: Adpt. DNQN}
	\begin{algorithmic}[1]
		\Require Initial point $\m{x}_i^0$, $i \in \mathcal{V}=\{1,\ldots,n\}$, Maximum iteration $T$, Stepsize $\alpha>0$, Mixing matrix $\m{W}$, Maximum computations $K_{g} \geq 1$, Maximum communications $K_c \geq 1$, Safeguard parameters $0<l\leq u$ and $\varrho>0$, Threshold sequences $\{\eta^t\}, \{\tau^s\}$, Parameters $\varepsilon \gg \tilde{\varepsilon} >0$, $\gamma>0$. 
		\State Initialize $t=0$, $\m{v}_i^{0}=\m{g}_i^{0}=\nabla f_i(\m{x}_i^0)$, $\m{H}_i^0=\m{I}$ for $i\in \mathcal{V}$.
		\While{$t < T$}
		\For{each node $i \in \mathcal{V}$ in parallel}
		\State $\m{x}_i^{t,0}=\m{x}_i^t$, $\m{v}_i^{t,0}=\m{v}_i^t$, $\m{H}_i^{t,0}=\m{H}_i^t$. 
		\For{$k = 0$ to $K_g-2$}
		\State $\Delta_1=\Vert\m{x}^t_i-\sum_{j \in \mathcal{N}_i}\tilde{W}_{ij}\m{x}^t_j\Vert$, $\Delta_2=\Vert \m{x}_i^{t,k} - \m{x}_i^t \Vert$.
		\If{$\alpha \Vert \m{H}_i^{t,k} \m{v}_i^{t,k} \Vert < \max\left\{\varepsilon \Delta_1, \tilde{\varepsilon}\right\}$  or $\Delta_2 > \gamma \alpha \Vert \m{v}_i^t \Vert$}
		\State $\m{x}_i^{t,k+1}= \m{x}_i^{t,k}$, $\m{v}_i^{t,k+1}=\m{v}_i^{t,k}$, $\m{H}_i^{t,k+1}=\m{H}_i^{t,k}$. 
		\Else
		\State $\m{x}_i^{t,k+1}= \m{x}_i^{t,k}-\alpha \m{H}_i^{t,k} \m{v}_i^{t,k}$. 
		\State $\m{v}_i^{t,k+1}=\m{v}_i^{t,k}+\m{g}_i^{t,k+1}-\m{g}_i^{t,k}$.
		\State $\Delta_x^{t,k}=\m{x}_i^{t,k+1}-\m{x}_i^{t,k}$, $\Delta_g^{t,k}=\m{g}_i^{t,k+1}-\m{g}_i^{t,k}$.
		\State Call Algorithm \ref{alg: MBFGS}: $\m{H}_i^{t,k+1} \leftarrow \text{MBFGS}(\Delta_x^{t,k}, \Delta_g^{t,k}, \Delta_g^{t,k}, l, u, \varrho)$.
		\EndIf
		\EndFor
		\State  $\m{x}_i^{t,K_g}=\m{x}_i^{t,K_g-1}-\alpha \m{H}_i^{t,K_g-1} \m{v}_i^{t,K_g-1}$. 
		\State Call Algorithm \ref{alg: sub_comm}: $\m{x}^{t+1}_i \leftarrow \text{EventTriggerComm}(\m{x}_i^{t,K_g}, K_c, \eta^t, \{\tau^s\}, \tilde{\m{W}})$.
		\State $\m{v}_i^{t,K_g}=\m{v}_i^{t,K_g-1}+\m{g}_i^{t+1}-\m{g}_i^{t,K_g-1}$. 
		\State Call Algorithm \ref{alg: sub_comm}: $\m{v}^{t+1}_i \leftarrow \text{EventTriggerComm}(\m{v}_i^{t,K_g}, K_c, \eta^t, \{\tau^s\}, \tilde{\m{W}})$.
		\State $\Delta_x^t=\m{x}_i^{t+1}-\m{x}_i^{t}$, $\Delta_v^t=\m{v}_i^{t+1}-\m{v}_i^{t}$, $\Delta_g^t=\m{g}_i^{t+1}-\m{g}_i^{t}$.
		\State Call Algorithm \ref{alg: MBFGS}: $\m{H}_i^{t+1} \leftarrow \text{MBFGS}(\Delta_x^t, \Delta_v^t, \Delta_g^t, l, u, \varrho)$.
		\EndFor
		\State $t \leftarrow t+1$.
		\EndWhile
		\Ensure $\{\m{x}_i^T\}_{i=1}^n$.
	\end{algorithmic}
\end{algorithm}

\begin{algorithm}[htbp]
	\caption{Memoryless BFGS Update---$\text{MBFGS}(\m{s}, \check{\m{y}}, \Delta\m{g}, l, u, \varrho)$}
	\label{alg: MBFGS}
	\begin{algorithmic}[1]
		\If{$\|\m{s}\|=0$}
		\State Set $\m{H}^+=\m{I}_p$ and terminate the update.
		\EndIf
		\If{$\m{s}\tr\check{\m{y}} > 0$}
		\State Calculate the extremal eigenvalues of the rank-two BFGS matrix generated by $\m{s}$ and $\check{\m{y}}$.
		\EndIf
		\If{$\m{s}\tr\check{\m{y}} > 0$ \textbf{and} $[\lambda_{\min}, \lambda_{\max}] \subset [l, u]$}
		\State $\m{y} = \check{\m{y}}$.
		\Else
		\State $h = \varrho + \max\left\{-\frac{\m{s}\tr\Delta\m{g}}{\|\m{s}\|^2}, 0\right\}$.
		\State $\m{y} = \Delta\m{g} + h\m{s}$.
		\EndIf
		\State Compute $\m{H}^+ = \frac{\m{s}\tr \m{y}}{\|\m{y}\|^2} \m{I}_p - \frac{\m{s}\m{y}\tr+\m{y}\m{s}\tr}{\|\m{y}\|^2} + 2 \frac{\m{s}\m{s}\tr}{\m{s}\tr\m{y}}$.
		\Ensure $\m{H}^+$.
	\end{algorithmic}
\end{algorithm}

\begin{algorithm}[htbp]
	\caption{Event-Triggered Broadcast---\text{EventTriggerComm}($\m{u}_{in}$, $K_c$, $\eta^t$, $\{\tau^s\}$, $\tilde{\m{W}}$)}
	\label{alg: sub_comm}
	\begin{algorithmic}[1]
		\State Initialize $\m{u}_i^{0} = \m{u}_{in}$, $\hat{\m{u}}_i^{0} = \m{u}_i^{0}$.
		\For{$s=0$ to $K_c-1$}
		\State $\m{u}_i^{s+1}=\m{u}_i^{s}- (\hat{\m{u}}_i^{s}-\sum_{j \in \mathcal{N}_i}\tilde{W}_{ij} \hat{\m{u}}_j^{s})$.
		\State $\hat{\m{u}}_i^{s+1}=\left\{\begin{array}{ll}
			\m{u}_i^{s+1}, & \text {if}~\Vert\m{u}_i^{s+1}-\hat{\m{u}}_i^{s}\Vert \geq \eta^t \tau^{s+1}; \\
			\hat{\m{u}}_i^{s}, & \text {otherwise}.
		\end{array}\right.$
		\EndFor
		\Ensure $\m{u}_i^{K_c}$.
	\end{algorithmic}
\end{algorithm}
\textbf{Memoryless BFGS update.}
Storing a dense inverse Hessian approximation requires $\mathcal{O}(p^2)$ memory. We therefore adopt the safeguarded memoryless BFGS update developed in \cite{wu2026unified} and summarize it in Algorithm \ref{alg: MBFGS}. The update uses only the latest curvature pair and has the following uniform bounds.

\begin{lemma}[Uniform bounds for MBFGS]\label{lem:mbfgs_bounds}
Suppose that Assumption \ref{as1} holds and $0<l\leq u$, $\varrho>0$. The matrix generated by Algorithm \ref{alg: MBFGS} satisfies
\begin{equation}\label{boundH}
	\psi\m{I}_p\preceq \m{H}^+\preceq\Psi\m{I}_p,\qquad
	\psi:=\min\left\{1,l,\frac{\varrho}{2(2L+\varrho)^2}\right\},\quad
	\Psi:=\max\left\{1,u,\frac{2}{\varrho}\right\}.
\end{equation}
\end{lemma}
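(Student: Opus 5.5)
The argument splits according to the branch of Algorithm \ref{alg: MBFGS} that is taken. In the trivial branch $\m{s}=0$ we have $\m{H}^+=\m{I}_p$, and this lies between $\psi\m{I}_p$ and $\Psi\m{I}_p$ because $\psi\le 1\le\Psi$. In the other two branches $\m{H}^+$ is the memoryless BFGS matrix built from the pair $(\m{s},\m{y})$ with $\m{s}\tr\m{y}>0$. So the main task is to bound its spectrum in terms of $\m{s}$ and $\m{y}$.

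First I would record the structure of $\m{H}^+$. It acts as $\frac{\m{s}\tr\m{y}}{\|\m{y}\|^2}$ times the identity on the orthogonal complement of $\mathrm{span}\{\m{s},\m{y}\}$. On that two-dimensional span, its determinant is $\left(\frac{\m{s}\tr\m{y}}{\|\m{y}\|^2}\right)^2$ (the standard BFGS determinant formula $\det\m{H}^+=\det\m{H}\cdot\frac{\m{s}\tr\m{y}}{\m{y}\tr\m{H}\m{y}}$ with $\m{H}=\frac{\m{s}\tr\m{y}}{\|\m{y}\|^2}\m{I}_p$). Its trace is $2\frac{\m{s}\tr\m{y}}{\|\m{y}\|^2}-2\frac{\m{s}\tr\m{y}}{\|\m{y}\|^2}+2\frac{\|\m{s}\|^2}{\m{s}\tr\m{y}}$, so the two eigenvalues on the span sum to $2\|\m{s}\|^2/\m{s}\tr\m{y}$. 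Hence $\lambda_{\max}\le 2\|\m{s}\|^2/\m{s}\tr\m{y}$. For the lower bound, $\lambda_{\min}$ is at least the product divided by the sum, which gives $\lambda_{\min}\ge\frac{(\m{s}\tr\m{y})^3}{2\|\m{y}\|^4\|\m{s}\|^2}$. The eigenvalue on the complement, $\m{s}\tr\m{y}/\|\m{y}\|^2$, lies between $\|\m{s}\|^{-2}\cdot$ these quantities by Cauchy--Schwarz, so it is dominated by the same two bounds.

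In the accepted branch, $\m{y}=\check{\m{y}}$, and the test in the algorithm guarantees that the extremal eigenvalues of the computed matrix already lie in $[l,u]$. This gives $\min\{1,l\}\m{I}_p\preceq\m{H}^+\preceq\max\{1,u\}\m{I}_p$ directly (the $1$ only matters when $p$ is large and the complement eigenvalue is interpreted as part of the spectrum checked). In the safeguarded branch, $\m{y}=\Delta\m{g}+h\m{s}$ with $h=\varrho+\max\{-\m{s}\tr\Delta\m{g}/\|\m{s}\|^2,0\}$. Here I would use $\|\Delta\m{g}\|\le L\|\m{s}\|$. This holds because $\Delta\m{g}$ is a local gradient difference at the same node, so Assumption \ref{as1} applies; I need to check that this is the case in both calls of MBFGS in Algorithm \ref{alg: Adpt. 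DNQN}. From it we get $\m{s}\tr\m{y}\ge\varrho\|\m{s}\|^2$, $h\le\varrho+L$, and $\|\m{y}\|\le(2L+\varrho)\|\m{s}\|$. Plugging these in yields $\lambda_{\max}\le 2/\varrho$ and $\lambda_{\min}\ge\frac{\varrho^3\|\m{s}\|^6}{2(2L+\varrho)^4\|\m{s}\|^6}$.

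The main obstacle is matching the exact constant $\frac{\varrho}{2(2L+\varrho)^2}$. My crude product/sum bound gives $\varrho^3/(2(2L+\varrho)^4)$, which is weaker. I would first try a sharper two-by-two computation. Writing $\m{H}^+\m{y}=\m{s}$, the Rayleigh quotient in the direction of $\m{y}$ gives $\m{y}\tr\m{H}^+\m{y}=\m{s}\tr\m{y}$. A more careful estimate of the span block, using the inverse BFGS form $\m{B}^+=(\m{H}^+)^{-1}$ and $\lambda_{\max}(\m{B}^+)\le\|\m{y}\|^2/\m{s}\tr\m{y}+\ldots$, should give $\lambda_{\min}\ge\m{s}\tr\m{y}/(2\|\m{y}\|^2)\ge\varrho/(2(2L+\varrho)^2)$. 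Combining the three branches then yields \eqref{boundH}.
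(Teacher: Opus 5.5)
Your case split and the safeguarded-branch estimates $\m{s}\tr\m{y}\ge\varrho\|\m{s}\|^2$ and $\|\m{y}\|\le(2L+\varrho)\|\m{s}\|$ match the paper. The paper reduces to exactly these two inequalities and then invokes the standard extremal-eigenvalue bounds for memoryless BFGS. The estimate $\|\Delta\m{g}\|\le L\|\m{s}\|$ does hold in both MBFGS calls, since each passes a difference of $\nabla f_i$ at the two endpoints of $\m{s}$. Your trace computation and the upper bound $\lambda_{\max}\le 2\|\m{s}\|^2/\m{s}\tr\m{y}\le 2/\varrho$ are correct.

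\textbf{The gap is in the lower bound: your determinant is wrong.} The identity $\det\m{H}^+=\det\m{H}\cdot\m{s}\tr\m{y}/(\m{y}\tr\m{H}\m{y})$ is the determinant formula for the DFP update of $\m{H}$. For the inverse BFGS update the correct one is $\det\m{H}^+=\det\m{H}\cdot\m{s}\tr\m{H}^{-1}\m{s}/(\m{s}\tr\m{y})$. With $\m{H}=\theta\m{I}_p$ and $\theta=\m{s}\tr\m{y}/\|\m{y}\|^2$, the product of the two eigenvalues on $\mathrm{span}\{\m{s},\m{y}\}$ is therefore $\|\m{s}\|^2/\|\m{y}\|^2$, not $\theta^2$. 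As a check, take $\|\m{s}\|=1$ and $\m{y}=a\m{s}+b\m{w}$ with $\m{w}\perp\m{s}$ a unit vector and $a>0$. The $2\times2$ block then has determinant $1/(a^2+b^2)$, while $\theta^2=a^2/(a^2+b^2)^2$. Consequently your argument as written only proves $\lambda_{\min}\ge\varrho^3/(2(2L+\varrho)^4)$. That is strictly weaker than $\psi$, and the step to the stated constant is only asserted (``should give'').

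\textbf{How to close it.} With the correct product, your own product/sum argument finishes immediately:
$\lambda_{\min}\ge\frac{\|\m{s}\|^2/\|\m{y}\|^2}{2\|\m{s}\|^2/\m{s}\tr\m{y}}=\frac{\m{s}\tr\m{y}}{2\|\m{y}\|^2}\ge\frac{\varrho}{2(2L+\varrho)^2}$.
Your $\m{B}^+$ route also works in one line, since $(\m{H}^+)^{-1}=\frac{\|\m{y}\|^2}{\m{s}\tr\m{y}}\bigl(\m{I}_p-\frac{\m{s}\m{s}\tr}{\|\m{s}\|^2}\bigr)+\frac{\m{y}\m{y}\tr}{\m{s}\tr\m{y}}\preceq\frac{2\|\m{y}\|^2}{\m{s}\tr\m{y}}\m{I}_p$.

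The correct product also gives the clean Cauchy--Schwarz statement you were reaching for. With $\lambda_1\le\lambda_2$ the two span eigenvalues, $(\theta-\lambda_1)(\theta-\lambda_2)=\bigl((\m{s}\tr\m{y})^2-\|\m{s}\|^2\|\m{y}\|^2\bigr)/\|\m{y}\|^4\le0$. So the complement eigenvalue $\theta$ lies between them and needs no separate treatment. This also covers the accepted branch even if the test inspects only the rank-two block. Finally, the constant $1$ in $\psi$ and $\Psi$ is needed only for the $\m{s}=\m{0}$ branch, not for the reason given in your parenthetical.
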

\begin{proof}
If $\|\m{s}\|=0$, the returned identity matrix satisfies \eqref{boundH}. If the candidate pair passes the safeguard, the result follows directly from the eigenvalue test. Otherwise, the corrected vector satisfies
$\m{s}\tr\m{y}\geq\varrho\|\m{s}\|^2$ and
$\|\m{y}\|\leq(2L+\varrho)\|\m{s}\|$. The standard extremal-eigenvalue bounds for the rank-two memoryless BFGS update then give \eqref{boundH}; see \cite{wu2026unified} for the complete derivation.
\end{proof}

The rank-two representation in Algorithm \ref{alg: MBFGS} need not be formed explicitly. Its product with a vector uses only vector inner products and scaled vector additions, and hence requires $\mathcal{O}(p)$ memory and arithmetic.

\textbf{Safeguarded consensus-aware adaptive computation.}
To avoid the over-fitting (i.e., client drift) caused by fixed multiple local updates and to dynamically balance local gains against the consensus error, we introduce a hybrid dynamic termination mechanism for the local computation loop (Steps 5-12 in Algorithm \ref{alg: Adpt. DNQN}). 

At the $k$-th inner iteration of the outer iteration $t$, node $i$ evaluates a dual-criterion condition. It will terminate the inner loop early if either of the following criteria is met.
\begin{align}
	\alpha \Vert \m{H}_i^{t,k} \m{v}_i^{t,k} \Vert &< \max\left\{\varepsilon \left\Vert\m{x}^t_i - \sum\nolimits_{j \in \mathcal{N}_i}\tilde{W}_{ij}\m{x}^t_j\right\Vert, \tilde{\varepsilon}\right\}, \label{eq:comp_condition_1} \\
	\Vert \m{x}_i^{t,k} - \m{x}_i^t \Vert &\ge \gamma \alpha \Vert \m{v}_i^t \Vert. \label{eq:comp_condition_2}
\end{align}
These two criteria serve complementary purposes in practice and theory. On the one hand, the condition \eqref{eq:comp_condition_1} monitors the magnitude of the single quasi-Newton step relative to the distance between the node with its neighborhood average. If met, it indicates that the local stationary gap has become negligible compared to the consensus error. Continuing local update would only exacerbate the consensus error without yielding meaningful objective descent. Thus, terminating the loop effectively saves computational resources.
On the other hand, while \eqref{eq:comp_condition_1} restricts the step-wise increment, \eqref{eq:comp_condition_2} places a restriction on the cumulative drift $\Vert \m{x}_i^{t,k} - \m{x}_i^t \Vert$. It guarantees that the total deviation of the local iterates from the anchor point $\m{x}_i^t$ never exceeds a margin proportional to the initial tracker's norm $\Vert \m{v}_i^t \Vert$. As we will demonstrate in the subsequent analysis, this bound on cumulative drift $\Vert \m{x}_i^{t,k} - \m{x}_i^t \Vert$ is the linchpin that decouples the convergence stepsize $\alpha$ from the reciprocal of the maximum local steps $1/K_g$.


\textbf{Adaptive broadcasts.}
We employ the standard event-triggered mechanism presented in Algorithm \ref{alg: sub_comm}. Instead of transmitting variables at every inner communication step $s$, node $i$ broadcasts its state only when it has significantly evolved.
Let $\hat{\m{z}}_i^{t,s}$ denote the latest variable transmitted by node $i$ to its neighbors up to step $s$. The update rule for the consensus variable $\m{z}$ becomes
\begin{equation}\label{update-z}
	\m{z}_i^{t,s+1} = \m{z}_i^{t,s} - \sum_{j \in \mathcal{N}_i}\tilde{W}_{ij} \left(\hat{\m{z}}_i^{t,s} - \hat{\m{z}}_j^{t,s}\right).
\end{equation}
Node $i$ broadcasts its new state $\m{z}_i^{t,s+1}$ (updating $\hat{\m{z}}_i^{t,s+1} = \m{z}_i^{t,s+1}$) only if the innovation exceeds a time-varying threshold:
\begin{equation}
	\Vert\m{z}_i^{t,s+1}-\hat{\m{z}}_i^{t,s}\Vert \geq \eta^t\cdot \tau^{s+1},
\end{equation}
where $\{\eta^t\}$ and $\{\tau^s\}$ are non-increasing sequences governing the decay of the threshold. Otherwise, node $i$ remains silent (maintaining $\hat{\m{z}}_i^{t,s+1} = \hat{\m{z}}_i^{t,s}$). Thus, a broadcast can be avoided when the local state variation remains below the prescribed threshold. The same logic is applied to the gradient tracking variable $\m{q}$.
\begin{remark}[Relationship between existing event-triggered mechanisms] It is important to emphasize that we do not propose a novel event-triggered mechanism in this paper; instead, we adopt a well-established approach. Similar event-triggered mechanisms have been utilized in \cite{liu2019communication,li2019communication,george2020distributed,liu2021dqc,zhang2023decentralized}. Differently, \cite{liu2023event} employs a threshold based on the difference between two consecutive updates, whereas \cite{gao2021event} utilizes a threshold related to the discrepancy on iterates between a node and its neighbors.
\end{remark}

\subsection{Convergence analysis for AdaDQN}

Now turn our attention to the convergence properties of the AdaDQN algorithm. Throughout this subsection, we consider $0<\sigma<1$. 
To demonstrate that AdaDQN generates a sequence satisfying \eqref{eps-stationary}, we construct the following potential function:
\begin{equation}\label{potential+}
	P(\m{x}^{t},\m{v}^{t})=F(\bar{\m{x}}^{t})+\Vert\m{x}^{t}-\m{M}\m{x}^{t}\Vert^2+\frac{1-(1+\eta){\sigma}^{2K_c}}{144(1+1/\tilde{\eta})L^2}\Vert\m{v}^{t}-\m{M}\m{v}^{t}\Vert^2,
\end{equation}
where $\tilde{\eta}$ and $\eta$ are some positive constants introduced by Young's inequality. The following  lemma provides several key inequalities.

\begin{lemma}\label{lem:adaptive_bounds}
	Suppose that Assumptions \ref{as0}, \ref{as1}, and \ref{as_network} hold. Let $\{\m{x}^t\}$ be the sequence generated by the AdaDQN algorithm. If the stepsize satisfies $\alpha \leq \frac{1}{\Gamma L}$, then for all $t \geq 0$ and $0 \le k \le K_g$, the following inequalities hold.
	\begin{equation}\label{lem3.2 eq2}
		\Vert\m{x}_i^{t,k}-\m{x}_i^t\Vert \leq  \Gamma \alpha \Vert\m{v}_i^t\Vert,
	\end{equation}
	\begin{align}\label{lem3.2 eq2.5}
		\Vert\m{x}^{t,k}-\m{M}\m{x}^{t,k}\Vert^2\leq  2 \Gamma^2 \alpha^2 \Vert\m{v}^t\Vert^2+2\Vert\m{x}^{t}-\m{M}\m{x}^t\Vert^2,
	\end{align}
	where $\Gamma = \gamma + 4\Psi$ is a constant.
\end{lemma}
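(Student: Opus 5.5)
The plan is a direct induction over the inner index $k$ for each fixed node $i$ and outer iteration $t$. It relies on three facts.
First, by the inner loop of Algorithm \ref{alg: Adpt. DNQN}, the identity \eqref{vtk}, i.e. $\m{v}_i^{t,k}=\m{v}_i^t+\nabla f_i(\m{x}_i^{t,k})-\nabla f_i(\m{x}_i^t)$, persists for every $k\le K_g-1$. In an executed step this follows from the tracking update. In a skipped step $\m{x}$ and $\m{v}$ are copied together, so the relation is preserved.
Second, every $\m{H}_i^{t,k}$ is either $\m{I}$ or an output of Algorithm \ref{alg: MBFGS}. Hence Lemma \ref{lem:mbfgs_bounds} gives $\Vert \m{H}_i^{t,k}\Vert\le\Psi$.
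Third, combining the identity with Assumption \ref{as1} yields $\Vert\m{v}_i^{t,k}\Vert\le\Vert\m{v}_i^t\Vert+L\Vert\m{x}_i^{t,k}-\m{x}_i^t\Vert$.

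\textbf{Intermediate invariant.} I will prove that $\Vert\m{x}_i^{t,k}-\m{x}_i^t\Vert\le(\gamma+2\Psi)\alpha\Vert\m{v}_i^t\Vert$ for $0\le k\le K_g-1$. The case $k=0$ is trivial. For the step from $k$ to $k+1$ there are two cases.

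If the step is skipped, then $\m{x}_i^{t,k+1}=\m{x}_i^{t,k}$ and the bound carries over.

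If the step is executed, the safeguard \eqref{eq:comp_condition_2} was not triggered, so $\Delta_2=\Vert\m{x}_i^{t,k}-\m{x}_i^t\Vert\le\gamma\alpha\Vert\m{v}_i^t\Vert$. Then
\begin{equation*}
\Vert\m{x}_i^{t,k+1}-\m{x}_i^t\Vert\le\gamma\alpha\Vert\m{v}_i^t\Vert+\alpha\Psi\Vert\m{v}_i^{t,k}\Vert\le\gamma\alpha\Vert\m{v}_i^t\Vert+\alpha\Psi(1+L\gamma\alpha)\Vert\m{v}_i^t\Vert .
\end{equation*}
Since $\alpha\le 1/(\Gamma L)$ and $\gamma\le\Gamma$, we have $L\gamma\alpha\le1$, which gives the claimed factor $\gamma+2\Psi$.

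The essential point is that the safeguard bounds the pre-step drift by $\gamma\alpha\Vert\m{v}_i^t\Vert$ independently of how many steps have been taken. This is what removes any $K_g$ dependence, in contrast to the $2\alpha\Psi k$ bound in \eqref{eq 3.25}.

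\textbf{Final unconditional step.} Step 16 is executed without any test, and this is the step I expect to need the most care. From the invariant at $k=K_g-1$,
\begin{equation*}
\Vert\m{x}_i^{t,K_g}-\m{x}_i^t\Vert\le(\gamma+2\Psi)\alpha\Vert\m{v}_i^t\Vert+\alpha\Psi\bigl(1+L(\gamma+2\Psi)\alpha\bigr)\Vert\m{v}_i^t\Vert .
\end{equation*}
Because $(\gamma+2\Psi)\le\Gamma$ and $\alpha\le 1/(\Gamma L)$, the bracket is at most $2$. The total is therefore at most $(\gamma+4\Psi)\alpha\Vert\m{v}_i^t\Vert=\Gamma\alpha\Vert\m{v}_i^t\Vert$. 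Together with the invariant, whose constant is smaller than $\Gamma$, this yields \eqref{lem3.2 eq2} for all $0\le k\le K_g$. The case $K_g=1$ is covered directly by this last estimate.

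\textbf{Consensus-error bound.} For \eqref{lem3.2 eq2.5}, I decompose
\begin{equation*}
\m{x}^{t,k}-\m{M}\m{x}^{t,k}=(\m{I}-\m{M})(\m{x}^{t,k}-\m{x}^t)+(\m{x}^t-\m{M}\m{x}^t).
\end{equation*}
I then apply $\Vert a+b\Vert^2\le2\Vert a\Vert^2+2\Vert b\Vert^2$ and use that $\m{I}-\m{M}$ is an orthogonal projection, so its norm is at most $1$. Squaring \eqref{lem3.2 eq2} and summing over $i$ gives $\Vert\m{x}^{t,k}-\m{x}^t\Vert^2\le\Gamma^2\alpha^2\Vert\m{v}^t\Vert^2$, which completes the bound.
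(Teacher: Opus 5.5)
Your proposal is correct and follows essentially the same argument as the paper. The safeguard caps the pre-step drift at $\gamma\alpha\Vert\m{v}_i^t\Vert$, and the tracker identity gives $\Vert\m{v}_i^{t,k}\Vert\le 2\Vert\m{v}_i^t\Vert$. Hence every iterate before the unconditional final step stays within $(\gamma+2\Psi)\alpha\Vert\m{v}_i^t\Vert$, the final step adds at most $2\Psi\alpha\Vert\m{v}_i^t\Vert$, and the consensus bound follows by the identical projection argument. The only difference is bookkeeping: you induct step by step over skipped and executed steps, whereas the paper takes the first trigger index $r_i^t$ and uses that the later iterates are frozen.
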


\begin{proof}
	\textbf{Proof of \eqref{lem3.2 eq2}:} We first identify the iterate from which the last mandatory quasi-Newton step in Algorithm \ref{alg: Adpt. DNQN} is taken. If node $i$ triggers either termination condition for the first time at an index $r_i^t\in\{0,\ldots,K_g-2\}$, then the iterates are frozen until the last mandatory update, namely,
	\begin{align*}
		\m{x}_i^{t,k}=\m{x}_i^{t,r_i^t},\quad
		\m{v}_i^{t,k}=\m{v}_i^{t,r_i^t},\quad
		\m{H}_i^{t,k}=\m{H}_i^{t,r_i^t},
		\qquad r_i^t\leq k\leq K_g-1.
	\end{align*}
	If neither termination condition is triggered, we set $r_i^t=K_g-1$. In both cases, the last mandatory update takes the unified form
	\begin{align}\label{eq:last_mandatory_step}
		\m{x}_i^{t,K_g}=\m{x}_i^{t,r_i^t}-\alpha\m{H}_i^{t,r_i^t}\m{v}_i^{t,r_i^t}.
	\end{align}
	For any accepted inner update before the index $r_i^t$, the preceding iterate did not trigger the safeguard condition \eqref{eq:comp_condition_2}, which guarantees
	\begin{align}\label{safeguard_bound}
		\Vert \m{x}_i^{t,k-1} - \m{x}_i^t \Vert < \gamma \alpha \Vert \m{v}_i^t \Vert.
	\end{align}
	By the recursion $\m{v}_i^{t,k-1} = \m{v}_i^t + \m{g}_i^{t,k-1} - \m{g}_i^t$ and the $L$-Lipschitz continuity of $\nabla f_i$, the tracking variable is bounded by
	\begin{align}\label{bound_v_tk}
		\Vert \m{v}_i^{t,k-1} \Vert \leq \Vert \m{v}_i^t \Vert + L \Vert \m{x}_i^{t,k-1} - \m{x}_i^t \Vert 
		< (1 + \Gamma L \alpha) \Vert \m{v}_i^t \Vert \leq 2 \Vert \m{v}_i^t \Vert,
	\end{align}
	where the last inequality applies the stepsize condition $\alpha \leq \frac{1}{\Gamma L}$. 
	According to the update rule $\m{x}_i^{t,k} = \m{x}_i^{t,k-1} - \alpha \m{H}_i^{t,k-1} \m{v}_i^{t,k-1}$ and the bounded approximation $\m{H}_i^{t,k-1} \preceq \Psi \m{I}$, it holds at step $k$ that
	\begin{align}\label{increment_bound}
		\Vert \m{x}_i^{t,k} - \m{x}_i^{t,k-1} \Vert \leq \alpha \Psi \Vert \m{v}_i^{t,k-1} \Vert < 2 \alpha \Psi \Vert \m{v}_i^t \Vert.
	\end{align}
	Applying the triangle inequality combining \eqref{safeguard_bound} and \eqref{increment_bound} shows that every iterate before the last mandatory update satisfies
	\begin{align*}
		\Vert \m{x}_i^{t,k} - \m{x}_i^t \Vert < (\gamma + 2\Psi) \alpha \Vert \m{v}_i^t \Vert.
	\end{align*}
	In particular, this bound holds for $\m{x}_i^{t,r_i^t}$. Using again the tracker recursion and $\alpha\leq 1/(\Gamma L)$ gives
	\begin{align*}
		\Vert\m{v}_i^{t,r_i^t}\Vert
		\leq \left(1+L(\gamma+2\Psi)\alpha\right)\Vert\m{v}_i^t\Vert
		\leq 2\Vert\m{v}_i^t\Vert.
	\end{align*}
	Combining this inequality with \eqref{eq:last_mandatory_step} yields
	\begin{align*}
		\Vert\m{x}_i^{t,K_g}-\m{x}_i^t\Vert
		\leq \Vert\m{x}_i^{t,r_i^t}-\m{x}_i^t\Vert
		+\alpha\Psi\Vert\m{v}_i^{t,r_i^t}\Vert
		<(\gamma+4\Psi)\alpha\Vert\m{v}_i^t\Vert
		=\Gamma\alpha\Vert\m{v}_i^t\Vert.
	\end{align*}
	The frozen iterates satisfy the same bound. Therefore, \eqref{lem3.2 eq2} holds for all $0\leq k\leq K_g$ regardless of whether a termination condition is triggered.
	
	\textbf{Proof of \eqref{lem3.2 eq2.5}:} Using the property $\Vert\m{I}-\m{M}\Vert \le 1$ and \eqref{lem3.2 eq2}, we derive
	\begin{align*}
		&\Vert\m{x}^{t,k}-\m{M}\m{x}^{t,k}\Vert^2 = \Vert(\m{I}-\m{M})(\m{x}^{t,k}-\m{x}^t) + (\m{x}^t-\m{M}\m{x}^t)\Vert^2\\
		\leq & 2\Vert\m{x}^{t,k}-\m{x}^t\Vert^2 + 2\Vert\m{x}^t-\m{M}\m{x}^t\Vert^2 
		\leq 2 \Gamma^2 \alpha^2 \Vert\m{v}^t\Vert^2 + 2\Vert\m{x}^t-\m{M}\m{x}^t\Vert^2,
	\end{align*}
which completes the proof.
\end{proof}

The following lemma quantifies both the descent and the amount of local computation generated by the adaptive loop. Let $q_i^t\in\{0,\ldots,K_g-1\}$ denote the number of nonmandatory local updates performed by node $i$ at iteration $t$, and let $\ell_i^t:=q_i^t+1$ include the last mandatory update. Define
\begin{equation}\label{eq:local_threshold}
	\chi_i^t:=\max\left\{\varepsilon\left\|\m{x}_i^t-\sum_{j\in\mathcal N_i}\tilde W_{ij}\m{x}_j^t\right\|,\tilde\varepsilon\right\},
	\qquad c_\alpha:=\alpha\psi-\frac{L}{2}\alpha^2\Psi^2.
\end{equation}

\begin{lemma}[Adaptive local-computation bound]\label{lem:adaptive_computation}
Suppose that the conditions of Lemma \ref{lem:adaptive_bounds} hold and
\begin{equation}\label{eq:descent_stepsize}
	\alpha\leq\min\left\{\frac{1}{2\Gamma L},\frac{\psi}{L\Psi^2}\right\}.
\end{equation}
Then
\begin{align}
	\phi_i^t(\m{x}_i^{t,K_g})
	&\leq \phi_i^t(\m{x}_i^t)
	-\frac{c_\alpha}{4}\ell_i^t\|\m{v}_i^t\|^2,\label{eq:adaptive_surrogate_descent}\\
	q_i^t
	&\leq \min\left\{K_g-1,\,
	C_{\rm loc}\frac{\alpha^2\|\m{v}_i^t\|^2}{(\chi_i^t)^2}\right\},\label{eq:adaptive_step_bound}
\end{align}
where
\begin{equation}\label{eq:C_loc}
	C_{\rm loc}:=
	\frac{\Psi^2\left(\Gamma+\frac{L}{2}\Gamma^2\alpha\right)}
	{\psi-\frac{L}{2}\alpha\Psi^2}.
\end{equation}
\end{lemma}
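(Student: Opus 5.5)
We prove the two claims separately. Both rest on the per-step descent inequality for the $L$-smooth surrogate $\phi_i^t$, together with the drift control of Lemma \ref{lem:adaptive_bounds}.

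\textbf{Descent bound \eqref{eq:adaptive_surrogate_descent}.} Exactly $\ell_i^t=q_i^t+1$ genuine steps are taken at node $i$: the $q_i^t$ accepted nonmandatory updates and the last mandatory update \eqref{eq:last_mandatory_step}. Frozen indices leave $\phi_i^t$ unchanged. Each genuine step has the form $\m{x}^+=\m{x}-\alpha\m{H}\nabla\phi_i^t(\m{x})$, using the identity $\m{v}_i^{t,k}=\nabla\phi_i^t(\m{x}_i^{t,k})$ from \eqref{vtk}, and $\m{H}$ satisfies the bounds of Lemma \ref{lem:mbfgs_bounds}. The descent lemma therefore gives a decrease of at least $c_\alpha\|\m{v}_i^{t,k}\|^2$ per step, as in the derivation of \eqref{eq:total_descent}. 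By \eqref{eq:descent_stepsize}, $\alpha\le\psi/(L\Psi^2)$, so $c_\alpha\ge\alpha\psi/2>0$.

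It remains to bound $\|\m{v}_i^{t,k}\|$ from below at every iterate where a step is taken. By \eqref{vtk} and Lemma \ref{lem:adaptive_bounds},
\[
\|\m{v}_i^{t,k}\|\ge\|\m{v}_i^t\|-L\|\m{x}_i^{t,k}-\m{x}_i^t\|\ge(1-\Gamma L\alpha)\|\m{v}_i^t\|\ge\tfrac12\|\m{v}_i^t\|,
\]
where the last inequality uses $\alpha\le1/(2\Gamma L)$. Hence $\|\m{v}_i^{t,k}\|^2\ge\|\m{v}_i^t\|^2/4$. Summing over the $\ell_i^t$ genuine steps yields \eqref{eq:adaptive_surrogate_descent}.

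\textbf{Step-count bound \eqref{eq:adaptive_step_bound}.} The bound $q_i^t\le K_g-1$ is trivial. For the other bound, note that each accepted nonmandatory step did not trigger \eqref{eq:comp_condition_1}. Therefore $\alpha\|\m{H}_i^{t,k}\m{v}_i^{t,k}\|\ge\chi_i^t$; here $\Delta_1$ depends only on $\m{x}^t$, so the threshold $\chi_i^t$ is the same for all $k$. Combined with $\|\m{H}\m{v}\|\le\Psi\|\m{v}\|$, this gives $\|\m{v}_i^{t,k}\|^2\ge(\chi_i^t)^2/(\alpha^2\Psi^2)$. The per-step decrease is then at least
\[
c_\alpha(\chi_i^t)^2/(\alpha^2\Psi^2)=(\psi-\tfrac L2\alpha\Psi^2)(\chi_i^t)^2/(\alpha\Psi^2),
\]
so that
\[
q_i^t\frac{(\psi-\frac L2\alpha\Psi^2)(\chi_i^t)^2}{\alpha\Psi^2}\le\phi_i^t(\m{x}_i^t)-\phi_i^t(\m{x}_i^{t,K_g}).
\]
Here we drop the nonnegative decrease of the mandatory step.

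The key is an upper bound on the total decrease that is independent of $K_g$. We obtain it by applying $L$-smoothness of $\phi_i^t$ around the anchor $\m{x}_i^t$, where $\nabla\phi_i^t(\m{x}_i^t)=\m{v}_i^t$:
\[
\phi_i^t(\m{x}_i^t)-\phi_i^t(\m{x}_i^{t,K_g})\le\|\m{v}_i^t\|\,\|\m{x}_i^{t,K_g}-\m{x}_i^t\|+\tfrac L2\|\m{x}_i^{t,K_g}-\m{x}_i^t\|^2\le\left(\Gamma\alpha+\tfrac L2\Gamma^2\alpha^2\right)\|\m{v}_i^t\|^2,
\]
using \eqref{lem3.2 eq2}. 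Dividing by the per-step decrease gives
\[
q_i^t\le\frac{\Psi^2\alpha^2(\Gamma+\frac L2\Gamma^2\alpha)\|\m{v}_i^t\|^2}{(\psi-\frac L2\alpha\Psi^2)(\chi_i^t)^2}=C_{\rm loc}\,\frac{\alpha^2\|\m{v}_i^t\|^2}{(\chi_i^t)^2},
\]
which matches \eqref{eq:C_loc}.

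\textbf{Main obstacle.} The delicate point is the bookkeeping of frozen versus accepted indices. We must make sure that the termination test \eqref{eq:comp_condition_1} is evaluated at the same iterate that is subsequently moved, and that all moved iterates, including $\m{x}_i^{t,r_i^t}$, lie within the drift radius of Lemma \ref{lem:adaptive_bounds}. Once this is settled, the whole argument reduces to the cumulative-drift bound of Lemma \ref{lem:adaptive_bounds}.
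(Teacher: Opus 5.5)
Your proposal is correct and follows essentially the same route as the paper. Both arguments use per-step descent of the $L$-smooth surrogate together with the lower bound $\|\m{v}_i^{t,k}\|\geq\frac12\|\m{v}_i^t\|$ from Lemma \ref{lem:adaptive_bounds}; a failed test \eqref{eq:comp_condition_1} then forces a per-step decrease of at least $c_\alpha(\chi_i^t)^2/(\alpha^2\Psi^2)$, which is capped by the $K_g$-independent total decrease $(\Gamma\alpha+\frac{L}{2}\Gamma^2\alpha^2)\|\m{v}_i^t\|^2$ from smoothness at the anchor. Your additional bookkeeping makes explicit several points the paper leaves implicit: $c_\alpha\geq\alpha\psi/2>0$, $\chi_i^t$ is independent of $k$, and the mandatory step's nonnegative decrease can be dropped.
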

\begin{proof}
Every executed local step satisfies
\begin{equation}\label{eq:accepted_step_descent}
	\phi_i^t(\m{x}_i^{t,k+1})
	\leq\phi_i^t(\m{x}_i^{t,k})-c_\alpha\|\m{v}_i^{t,k}\|^2.
\end{equation}
Moreover, Lemma \ref{lem:adaptive_bounds} and the tracker recursion give
\[
	\|\m{v}_i^{t,k}\|
	\geq\|\m{v}_i^t\|-L\|\m{x}_i^{t,k}-\m{x}_i^t\|
	\geq\frac{1}{2}\|\m{v}_i^t\|.
\]
Summing \eqref{eq:accepted_step_descent} over the $\ell_i^t$ executed steps proves \eqref{eq:adaptive_surrogate_descent}.

For every executed nonmandatory step, condition \eqref{eq:comp_condition_1} is false. Hence,
\[
	\alpha\|\m{H}_i^{t,k}\m{v}_i^{t,k}\|\geq\chi_i^t,
	\qquad
	\|\m{v}_i^{t,k}\|\geq\frac{\chi_i^t}{\alpha\Psi}.
\]
Summing the resulting decrease over the $q_i^t$ nonmandatory steps yields
\[
	q_i^t\frac{c_\alpha(\chi_i^t)^2}{\alpha^2\Psi^2}
	\leq\phi_i^t(\m{x}_i^t)-\phi_i^t(\m{x}_i^{t,K_g}).
\]
Since $\nabla\phi_i^t(\m{x}_i^t)=\m{v}_i^t$, the lower smoothness inequality and Lemma \ref{lem:adaptive_bounds} imply
\[
	\phi_i^t(\m{x}_i^t)-\phi_i^t(\m{x}_i^{t,K_g})
	\leq\left(\Gamma\alpha+\frac{L}{2}\Gamma^2\alpha^2\right)\|\m{v}_i^t\|^2.
\]
Combining the last two inequalities proves \eqref{eq:adaptive_step_bound}.
\end{proof}

The event-triggered communication step preserves $\m{M}\m{v}^t=\m{M}\m{g}^t$ and $\bar{\m{x}}^{t+1}=\bar{\m{x}}^{t+1/2}$. Therefore, Lemma \ref{lem:descent_lemma} applies with
\[
	\gamma_1=\frac{c_\alpha}{4},\qquad
	\gamma_2=\Gamma^2\alpha^2.
\]
Retaining the additional descent generated by the $q_i^t$ nonmandatory steps gives
\begin{align}\label{eq:lemma1_descent}
	F(\bar{\m{x}}^{t+1})\leq&F(\bar{\m{x}}^t)
	-\left(\frac{\alpha\psi}{4n}-\frac{L\alpha^2\Psi^2}{8n}
	-\frac{L\Gamma^2\alpha^2(2+d+b)}{2n}\right)\|\m{v}^t\|^2\notag\\
	&-\frac{c_\alpha}{4n}\sum_{i=1}^nq_i^t\|\m{v}_i^t\|^2
	+\frac{L}{2bn}\|\m{x}^t-\m{M}\m{x}^t\|^2
	+\frac{1}{2Ldn}\|\m{v}^t-\m{M}\m{v}^t\|^2,
\end{align}
where $b,d>0$ are arbitrary. Thus, the objective-descent component of the RIA analysis is reused directly; the remaining analysis only controls the event-triggered consensus and tracking errors.

We define the event-triggered error at the $s$-th inner communication step as $\m{E}^{t,s}=\hat{\m{z}}^{t,s}-\m{z}^{t,s}$. Based on the event-triggered communication protocol, the recursion of $\m{z}^{t,s}$ can be expressed as
\begin{align*}
	\m{z}^{t,s}=\m{W}\m{z}^{t,s-1}+(\m{W}-\m{I})\m{E}^{t,s-1}=\m{W}^s\m{z}^{t,0}+\sum_{k=0}^{s-1}(\m{W}-\m{I})\m{W}^k \m{E}^{t,s-1-k}.
\end{align*}
Recalling the relationships $\m{x}^{t+1}=\m{z}^{t,K_c}$ and $\m{x}^{t,K_g}=\m{z}^{t,0}$, we obtain the updating formula for the iterate $\m{x}^t$:
\begin{equation}
	\m{x}^{t+1}=\m{W}^{K_c}\m{x}^{t,K_g}+\sum_{k=0}^{K_c-1}(\m{W}-\m{I})\m{W}^k \m{E}^{t,K_c-1-k}.
\end{equation}
By incorporating the accumulation of $K_g$ inner steps for $\m{x}^{t,K_g}$, we have
\begin{align}\label{x-mx}
	\m{x}^{t+1}-\m{M}\m{x}^{t+1}=&(\m{I}-\m{M})\m{W}^{K_c}\left(\m{x}^{t}+\m{x}^{t,K_g}-\m{x}^{t}\right)+\m{Z}^t,
\end{align}
where $\m{Z}^t$ aggregates the event-triggered errors, specifically,
$$
\m{Z}^t=(\m{I}-\m{M})\sum_{k=0}^{K_c-1}\m{W}^{k+1} \m{E}^{t,K_c-1-k}-(\m{I}-\m{M})\sum_{k=0}^{K_c-1}\m{W}^{k} \m{E}^{t,K_c-1-k}.
$$
According to the event-triggered mechanism, if $\Vert\m{z}_i^{t,s}-\hat{\m{z}}_i^{t,s-1}\Vert \geq \eta^t\cdot \tau^{s}$, node $i$ broadcasts its state, resulting in $\m{E}^{t,s}=\m{0}$. Otherwise, the node remains silent, implying $\hat{\m{z}}_i^{t,s}=\hat{\m{z}}_i^{t,s-1}$ and $\m{E}^{t,s}$ is bounded by the threshold. Consequently, in either case, the error satisfies
\begin{equation}\label{ets}
	\Vert\m{E}^{t,s}\Vert \leq \sqrt{n}\eta^t\cdot \tau^{s}.
\end{equation}
Analogously, applying a similar analysis to the gradient tracking variable $\m{v}$ and using \eqref{vtk}, we derive
\begin{align}\label{eq 2.65}
	&\m{v}^{t+1}-\m{M}\m{v}^{t+1}
	=(\m{I}-\m{M})\m{W}^{K_c}(\m{v}^{t,K_g-1}+\m{g}^{t+1}-\m{g}^{t,K_g-1})+\m{Q}^t\notag\\
	=&(\m{I}-\m{M})\m{W}^{K_c}(\m{v}^{t}+\m{g}^{t,K_g-1}-\m{g}^{t})
	+(\m{I}-\m{M})\m{W}^{K_c}(\m{g}^{t+1}-\m{g}^{t,K_g-1})+\m{Q}^t,
\end{align}
where $\m{Q}^t$ represents the accumulated noise for the tracking variable, that is
$$
\m{Q}^t=(\m{I}-\m{M})\sum_{k=0}^{K_c-1}\m{W}^{k+1} \tilde{\m{E}}^{t,K_c-1-k}-(\m{I}-\m{M})\sum_{k=0}^{K_c-1}\m{W}^{k} \tilde{\m{E}}^{t,K_c-1-k},
$$
with $\tilde{\m{E}}^{t,s}=\hat{\m{q}}^{t,s}-\m{q}^{t,s}$. Similar to \eqref{ets}, this error $\tilde{\m{E}}^{t,s}$ is bounded, i.e.,
\begin{equation}\label{hatets}
	\Vert\tilde{\m{E}}^{t,s}\Vert \leq \sqrt{n}\eta^t\cdot \tau^{s}.
\end{equation}

\begin{lemma}\label{lem 2.19}
	Suppose that Assumptions \ref{as0}, \ref{as1}, and \ref{as_network} hold. Let $\{\m{x}^t\}$ be the sequence generated by the AdaDQN algorithm. Then we have for all $t \geq 0$ that
	\begin{align}\label{lem3.4 eq0+}
\Vert\m{x}^{t+1}-\m{M}\m{x}^{t+1}\Vert^2 \leq & (1+\eta)\sigma^{2K_c}\Vert\m{x}^t-\m{M}\m{x}^t\Vert^2 + \theta_1(\eta^t)^2 \notag\\
& + 2(1+1/\eta)\alpha^2\sigma^{2K_c}\Gamma^2 \Vert\m{v}^t\Vert^2.
	\end{align}
	where $\theta_1=8(1+1/\eta){n}\left(\sum_{k=0}^{K_c-1}\sigma^{k} \tau^{ K_c-1-k}\right)^2$, and $\eta>0$ is a constant introduced by Young's inequality.
\end{lemma}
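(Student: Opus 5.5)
Starting from the decomposition \eqref{x-mx}, we write
\[
\m{x}^{t+1}-\m{M}\m{x}^{t+1}=(\m{W}^{K_c}-\m{M})(\m{x}^t-\m{M}\m{x}^t)+(\m{W}^{K_c}-\m{M})(\m{x}^{t,K_g}-\m{x}^t)+\m{Z}^t .
\]
This uses $(\m{I}-\m{M})\m{W}^{K_c}=\m{W}^{K_c}-\m{M}$ and $(\m{W}^{K_c}-\m{M})\m{M}=0$ from Lemma \ref{property W}. We then apply Young's inequality with parameter $\eta$, separating the first term from the sum of the other two:
\[
\|\m{x}^{t+1}-\m{M}\m{x}^{t+1}\|^2\le(1+\eta)\sigma^{2K_c}\|\m{x}^t-\m{M}\m{x}^t\|^2+(1+1/\eta)\big\|(\m{W}^{K_c}-\m{M})(\m{x}^{t,K_g}-\m{x}^t)+\m{Z}^t\big\|^2 .
\]
The last squared norm is split again by $\|a+b\|^2\le2\|a\|^2+2\|b\|^2$.

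For the drift part, Lemma \ref{property W} and \eqref{lem3.2 eq2} with $k=K_g$, summed over nodes, give
\[
2\|(\m{W}^{K_c}-\m{M})(\m{x}^{t,K_g}-\m{x}^t)\|^2\le 2\sigma^{2K_c}\Gamma^2\alpha^2\|\m{v}^t\|^2 .
\]
Multiplied by $(1+1/\eta)$, this is exactly the third term of \eqref{lem3.4 eq0+}.

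For the event-triggered part we must show $2\|\m{Z}^t\|^2\le 8n(\sum_k\sigma^k\tau^{K_c-1-k})^2(\eta^t)^2$. We rewrite
\[
\m{Z}^t=\sum_{k=0}^{K_c-1}(\m{I}-\m{M})(\m{W}-\m{I})\m{W}^k\m{E}^{t,K_c-1-k} .
\]
Since $\m{W}-\m{I}$ annihilates the consensus directions, $(\m{I}-\m{M})(\m{W}-\m{I})\m{W}^k=(\m{W}-\m{I})(\m{W}^k-\m{M})$. Moreover, $\|\m{W}-\m{I}\|\le2$ because the eigenvalues of $\m{W}$ lie in $(-1,1]$. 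Together with Lemma \ref{property W}, this bounds each summand by $2\sigma^k\|\m{E}^{t,K_c-1-k}\|$. The $k=0$ summand is slightly different, since $\m{W}^0=\m{I}$ gives no $\sigma$ factor, but $\sigma^0=1$, so the bound still holds via $\|\m{I}-\m{M}\|\le1$. By the triangle inequality and \eqref{ets},
\[
\|\m{Z}^t\|\le2\sqrt n\,\eta^t\sum_{k=0}^{K_c-1}\sigma^k\tau^{K_c-1-k}.
\]
Squaring and multiplying by $2(1+1/\eta)$ gives $8(1+1/\eta)n(\sum_k\sigma^k\tau^{K_c-1-k})^2(\eta^t)^2=\theta_1(\eta^t)^2$.

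Summing the three bounds yields \eqref{lem3.4 eq0+}. The main point needing care is the operator bound on $(\m{I}-\m{M})(\m{W}-\m{I})\m{W}^k$. The crude alternative, bounding the two sums in $\m{Z}^t$ separately, gives the same factor $2\sigma^k$ only if $\|(\m{I}-\m{M})\m{W}^{k+1}\|\le\sigma^{k+1}\le\sigma^k$ is used. Either route fits the constant $8$. I would also double-check the index shift of $\tau$ between \eqref{ets}, which uses $\tau^s$, and the ordering of $\m{E}^{t,K_c-1-k}$, so that the weights come out as $\tau^{K_c-1-k}$.
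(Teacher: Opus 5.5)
Your proposal is correct and follows essentially the same route as the paper. You apply Young's inequality with parameter $\eta$ to \eqref{x-mx}, split the remainder by $2\|a\|^2+2\|b\|^2$, bound the drift term with \eqref{lem3.2 eq2}, and use $\|\m{Z}^t\|\le 2\sqrt{n}\eta^t\sum_{k}\sigma^k\tau^{K_c-1-k}$; the paper gets this last bound by treating the two sums in $\m{Z}^t$ separately (using $\|\m{W}^{k+1}-\m{M}\|\le\sigma^{k}$ and $\|\m{W}^{k}-\m{M}\|\le\sigma^k$), which is exactly your ``crude alternative''. As in the paper, invoking \eqref{lem3.2 eq2} tacitly requires the stepsize condition $\alpha\le 1/(\Gamma L)$ of Lemma \ref{lem:adaptive_bounds}, even though the lemma's hypotheses do not list it.
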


\begin{proof}
	Applying Young's inequality with parameter $\eta>0$ to \eqref{x-mx}, and invoking Lemma \ref{property W} and Lemma \ref{important}, we obtain
	\begin{align}\label{lem3.4_step1}
		\Vert\m{x}^{t+1}-\m{M}\m{x}^{t+1}\Vert^2 
		\leq & (1+\eta)\sigma^{2K_c}\Vert\m{x}^t-\m{M}\m{x}^t\Vert^2 + 2(1+1/\eta)\Vert\m{Z}^t\Vert^2 \notag\\
		&+ {2(1+1/\eta)\sigma^{2K_c} \left\Vert \m{x}^{t,K_g}-\m{x}^{t} \right\Vert^2}. 
	\end{align}
	Based on {Lemma \ref{property W}}, \eqref{ets}, and the fact that $0<\sigma<1$, the aggregated error $\m{Z}^t$ is bounded by
	\begin{align}\label{boundZ}
		\Vert\m{Z}^t\Vert\leq& \sum_{k=0}^{K_c-1}\Vert\m{W}^{k+1}-\m{M}\Vert \Vert\m{E}^{t,K_c-1-k}\Vert+\sum_{k=0}^{K_c-1}\Vert\m{W}^{k}-\m{M}\Vert \Vert\m{E}^{t,K_c-1-k}\Vert \notag\\
		\leq &2\sqrt{n}\eta^{t}\sum_{k=0}^{K_c-1}\sigma^{k} \tau^{ K_c-1-k}.
	\end{align}
	Substituting \eqref{lem3.2 eq2} and \eqref{boundZ} into \eqref{lem3.4_step1}, we get the desired result \eqref{lem3.4 eq0+}.
\end{proof}

\begin{lemma}\label{lem:adaptive_tracking_error}
	Suppose that Assumptions \ref{as0}, \ref{as1}, and \ref{as_network} hold. Let $\{\m{x}^t\}$ be the sequence generated by the AdaDQN algorithm. If the stepsize satisfies $\alpha \leq \frac{1}{\Gamma L}$, we have for all $t \geq 0$ that
	\begin{align}\label{lem3.5 eq0+}
		&\frac{1-(1+\eta){\sigma}^{2K_c}}{144(1+1/\tilde{\eta})L^2}\Vert\m{v}^{t+1}-\m{M}\m{v}^{t+1}\Vert^2
		\leq \frac{1-(1+\eta){\sigma}^{2K_c}}{144(1+1/\tilde{\eta})L^2} \Vert\m{v}^{t}-\m{M}\m{v}^{t}\Vert^2\notag\\
		&-\frac{(1-(1+\eta){\sigma}^{2K_c})(1-(1+\tilde{\eta}){\sigma}^{2K_c})}{144(1+1/\tilde{\eta})L^2}\Vert\m{v}^{t}-\m{M}\m{v}^{t}\Vert^2\notag\\
		&+\frac{1}{2}\sigma^{2K_c}(1-(1+\eta){\sigma}^{2K_c})\Vert{\m{x}}^t-\m{M}{\m{x}}^t\Vert^2\notag\\
		&+\frac{7}{12}(1-(1+\eta){\sigma}^{2K_c})\alpha^2\sigma^{2K_c}\Gamma^2 \Vert\m{v}^t\Vert^2+ \theta_2(\eta^{t})^2,
	\end{align}
	where $\theta_2=12(1+1/\tilde{\eta}){n}\left(\sum_{k=0}^{K_c-1}\sigma^{k} \tau^{ K_c-1-k}\right)^2(1+3\sigma^{2K_c}L^2) \frac{1-(1+\eta){\sigma}^{2K_c}}{144(1+1/\tilde{\eta})L^2}$ and  $\eta>0$, $\tilde{\eta} >0$ are constants introduced by Young's inequality.
\end{lemma}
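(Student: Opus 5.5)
Starting from the decomposition \eqref{eq 2.65}, I would split $\m{v}^{t+1}-\m{M}\m{v}^{t+1}$ into three pieces: the contracted tracking part $(\m{I}-\m{M})\m{W}^{K_c}\m{v}^t$, the gradient-increment part $(\m{I}-\m{M})\m{W}^{K_c}(\m{g}^{t+1}-\m{g}^t)$ (recombining the two $\m{g}$ differences in \eqref{eq 2.65}), and the event-triggered noise $\m{Q}^t$. First apply Young's inequality with parameter $\tilde\eta$ to separate the contracted part, giving $(1+\tilde\eta)\sigma^{2K_c}\Vert\m{v}^t-\m{M}\m{v}^t\Vert^2$ via Lemma~\ref{property W}. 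On the remainder, use $\Vert a+b\Vert^2\le 2\Vert a\Vert^2+2\Vert b\Vert^2$ (Lemma~\ref{important}) to split the gradient part from $\m{Q}^t$, so both carry a factor $2(1+1/\tilde\eta)$ (the final constants $12$, $144$ suggest some slack that I will allow for).

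For the gradient part, bound it by $\sigma^{2K_c}L^2\Vert\m{x}^{t+1}-\m{x}^t\Vert^2$. Then decompose $\m{x}^{t+1}-\m{x}^t=(\m{W}^{K_c}-\m{I})(\m{x}^t-\m{M}\m{x}^t)+\m{W}^{K_c}(\m{x}^{t,K_g}-\m{x}^t)+\m{Z}^t$ using the formula for $\m{x}^{t+1}$ with the event-triggered errors and $\m{M}\m{Z}^t=0$. A three-term bound $\Vert a+b+c\Vert^2\le3(\cdots)$ gives $12\Vert\m{x}^t-\m{M}\m{x}^t\Vert^2+3\Gamma^2\alpha^2\Vert\m{v}^t\Vert^2+3\Vert\m{Z}^t\Vert^2$, using $\Vert\m{W}^{K_c}-\m{I}\Vert\le 2$, Lemma~\ref{lem:adaptive_bounds} (valid since $\alpha\le 1/(\Gamma L)$), and \eqref{boundZ}. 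Analogously, bound $\Vert\m{Q}^t\Vert\le 2\sqrt n\eta^t\sum_k\sigma^k\tau^{K_c-1-k}$ from \eqref{hatets} exactly as in \eqref{boundZ}. Collecting, the $(\eta^t)^2$ terms combine into $4n(\sum_k\sigma^k\tau^{K_c-1-k})^2(\eta^t)^2$ times $(1+3\sigma^{2K_c}L^2)$ times the Young factor, which matches the shape of $\theta_2$ after multiplication by the potential weight.

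Finally, multiply the whole recursion by $\frac{1-(1+\eta)\sigma^{2K_c}}{144(1+1/\tilde\eta)L^2}$ and rewrite $(1+\tilde\eta)\sigma^{2K_c}=1-(1-(1+\tilde\eta)\sigma^{2K_c})$ to produce the first two lines of \eqref{lem3.5 eq0+}. The consensus coefficient becomes something like $\frac{24\cdot 2\,\sigma^{2K_c}L^2}{144L^2}(1-(1+\eta)\sigma^{2K_c})\le\frac12\sigma^{2K_c}(\cdot)$, and the $\Vert\m{v}^t\Vert^2$ coefficient is a small multiple bounded by $\frac{7}{12}$. The main obstacle is bookkeeping these constants so that the stated $\frac12$, $\frac{7}{12}$ and $\theta_2$ come out exactly. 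I would first try the split sketched above. If the constants do not match, I would choose the inner Young/triangle splits (for example, $3$-way versus $2$-way, and whether $\m{Z}^t$ is grouped with the consensus term) to fit them, since any such choice yields an inequality of the stated form with possibly smaller constants.
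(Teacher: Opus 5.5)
Your proof is correct, and it organizes the gradient-increment term differently from the paper.

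\textbf{The paper's route.} The paper does not recombine the two gradient differences in \eqref{eq 2.65}. After the $\tilde\eta$-Young step it splits the remainder three ways:
\begin{itemize}
\item $\m{g}^{t,K_g-1}-\m{g}^t$, bounded by $L\Vert\m{x}^{t,K_g-1}-\m{x}^t\Vert$;
\item $\m{g}^{t+1}-\m{g}^{t,K_g-1}$, bounded by $L\Vert\m{x}^{t+1}-\m{x}^{t,K_g-1}\Vert$;
\item the noise $\m{Q}^t$.
\end{itemize}
The middle displacement is then expanded through three pieces: the consensus error of the intermediate point $\m{x}^{t,K_g-1}$ (via \eqref{lem3.2 eq2.5}), the last mandatory step $\alpha\m{H}^{t,K_g-1}\m{v}^{t,K_g-1}$ together with $\Vert\m{v}_i^{t,K_g-1}\Vert\le 2\Vert\m{v}_i^t\Vert$, and $\m{Z}^t$. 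This produces the constants $72$, $84=3+81$ and $12$. Dividing by $144$ gives exactly $\frac12$, $\frac{7}{12}$ and $\theta_2$.

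\textbf{Your route.} You use $\m{v}^{t,K_g}=\m{v}^t+\m{g}^{t+1}-\m{g}^t$ and bound directly
$\Vert\m{x}^{t+1}-\m{x}^t\Vert^2\le 12\Vert\m{x}^t-\m{M}\m{x}^t\Vert^2+3\Gamma^2\alpha^2\Vert\m{v}^t\Vert^2+3\Vert\m{Z}^t\Vert^2$.
This needs only \eqref{lem3.2 eq2} at $k=K_g$. In effect you transplant the RIA tracking-lemma argument and add $\m{Z}^t$ and $\m{Q}^t$. With your two-way split the coefficients come out as:
\begin{itemize}
\item $\frac16\sigma^{2K_c}(1-(1+\eta)\sigma^{2K_c})$ on the consensus term;
\item $\frac1{24}(1-(1+\eta)\sigma^{2K_c})\alpha^2\sigma^{2K_c}\Gamma^2$ on $\Vert\m{v}^t\Vert^2$;
\item $\frac23\theta_2$ on $(\eta^t)^2$.
\end{itemize}
All three are dominated by the stated ones, so no further tuning of splits is needed. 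Your ``$24\cdot 2$'' double-counts the factor $2$, but the resulting $\frac13$ is still below $\frac12$.

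\textbf{Comparison.} Your version is shorter, gives sharper constants, and avoids both \eqref{lem3.2 eq2.5} and the separate bound on $\m{v}^{t,K_g-1}$. The paper's detour through $\m{x}^{t,K_g-1}$ only serves to reach its particular constants.

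\textbf{Shared caveat.} Multiplying by the weight, and then enlarging coefficients to the stated ones, requires $1-(1+\eta)\sigma^{2K_c}\ge 0$. The lemma does not state this, and both your proof and the paper's use it implicitly. It holds for the choice $\eta=\frac{1-\sigma^{2K_c}}{2\sigma^{2K_c}}$ made in Theorem~\ref{thm:global_convergence_ada}.
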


\begin{proof}
	Applying Young's inequality with parameter $\tilde{\eta}>0$ to \eqref{eq 2.65}, and invoking Lemma \ref{property W}, Lemma \ref{important} and the $L$-Lipschitz continuity of $\nabla f$, we obtain
	\begin{align}\label{eq 2.69}
		&\Vert\m{v}^{t+1}-\m{M}\m{v}^{t+1}\Vert^2 \notag\\
		\leq &(1+\tilde{\eta})\sigma^{2K_c}\Vert\m{v}^{t}-\m{M}\m{v}^{t}\Vert^2+3(1+1/\tilde{\eta})\sigma^{2K_c}L^2\Vert\m{x}^{t,K_g-1}-\m{x}^{t}\Vert^2\notag\\
		&+3(1+1/\tilde{\eta})\sigma^{2K_c}L^2\Vert\m{x}^{t+1}-\m{x}^{t,K_g-1}\Vert^2+3(1+1/\tilde{\eta})\Vert\m{Q}^t\Vert^2.
	\end{align}
	Recall that $\m{x}^{t+1} = \m{W}^{K_c}(\m{x}^{t,K_g-1}-\alpha\m{H}^{t,K_g-1}\m{v}^{t,K_g-1})+\m{Z}^t$. Using Lemma \ref{important}, \eqref{boundH}, and the spectral property $\Vert\m{W}^{K_c}-\m{I}\Vert^2 \le 4$, we bound the term $\Vert\m{x}^{t+1}-\m{x}^{t,K_g-1}\Vert^2$ by
	\begin{align*}
		&\Vert\m{x}^{t+1}-\m{x}^{t,K_g-1}\Vert^2
		= \Vert\m{W}^{K_c}(\m{x}^{t,K_g-1}-\alpha\m{H}^{t,K_g-1}\m{v}^{t,K_g-1})-\m{x}^{t,K_g-1}+\m{Z}^t\Vert^2\\
		\leq &3\Vert(\m{W}^{K_c}-\m{I})(\m{x}^{t,K_g-1}-\m{M}\m{x}^{t,K_g-1})\Vert^2+3\alpha^2\Psi^2\Vert\m{v}^{t,K_g-1}\Vert^2+3\Vert\m{Z}^t\Vert^2\\
		\leq &12\Vert\m{x}^{t,K_g-1}-\m{M}\m{x}^{t,K_g-1}\Vert^2+12\alpha^2\Psi^2\Vert\m{v}^t\Vert^2+3\Vert\m{Z}^t\Vert^2,
	\end{align*}
	where the last inequality applies the bound $\Vert \m{v}_i^{t,k-1} \Vert \leq 2 \Vert \m{v}_i^t \Vert$ established in \eqref{bound_v_tk}.
	Substituting \eqref{lem3.2 eq2.5} into the above inequality yields
	\begin{align}\label{eq:disp_bound}
		\Vert\m{x}^{t+1}-\m{x}^{t,K_g-1}\Vert^2 &\leq 12\left( 2\Gamma^2 \alpha^2 \Vert\m{v}^t\Vert^2 + 2\Vert\m{x}^t-\m{M}\m{x}^t\Vert^2 \right) + 12\alpha^2\Psi^2\Vert\m{v}^t\Vert^2+3\Vert\m{Z}^t\Vert^2 \notag\\
		&= 24\Vert\m{x}^t-\m{M}\m{x}^t\Vert^2 + (24\Gamma^2 + 12\Psi^2)\alpha^2\Vert\m{v}^t\Vert^2 + 3\Vert\m{Z}^t\Vert^2\notag\\
		& \leq 24\Vert\m{x}^t-\m{M}\m{x}^t\Vert^2 + 27\Gamma^2\alpha^2\Vert\m{v}^t\Vert^2 + 3\Vert\m{Z}^t\Vert^2,
	\end{align}
where the inequality uses the relation that $12\Psi^2 \leq 3\Gamma^2$.
	Substituting \eqref{eq:disp_bound} and \eqref{lem3.2 eq2} into \eqref{eq 2.69} yields
	\begin{align}\label{vMv}
		&\Vert\m{v}^{t+1}-\m{M}\m{v}^{t+1}\Vert^2
		\leq (1+\tilde{\eta})\sigma^{2K_c}\Vert\m{v}^{t}-\m{M}\m{v}^{t}\Vert^2 + 72(1+1/\tilde{\eta})\sigma^{2K_c}L^2\Vert\m{x}^t-\m{M}\m{x}^t\Vert^2 \notag\\
		&+ 84(1+1/\tilde{\eta})\sigma^{2K_c} L^2  \Gamma^2  \alpha^2 \Vert\m{v}^t\Vert^2 + \text{NoiseTerm},
	\end{align}
	where $\text{NoiseTerm} = 3(1+1/\tilde{\eta})(\Vert\m{Q}^t\Vert^2+3\sigma^{2K_c}L^2\Vert\m{Z}^t\Vert^2)$. 
	The bound for $\text{NoiseTerm}$ follows the identical logic as in Lemma \ref{lem 2.19}, bounded by
	\begin{equation*}
		\text{NoiseTerm} \leq 12(1+1/\tilde{\eta}){n}\left(\sum_{k=0}^{K_c-1}\sigma^{k} \tau^{ K_c-1-k}\right)^2(1+3\sigma^{2K_c}L^2)(\eta^t)^2.
	\end{equation*}
	Finally, multiplying both sides of \eqref{vMv} by $\frac{1-(1+\eta){\sigma}^{2K_c}}{144(1+1/\tilde{\eta})L^2}$ yields the stated inequality \eqref{eq 2.69}.
\end{proof}

\begin{theorem}\label{thm:global_convergence_ada}
	Suppose that Assumptions \ref{as0}, \ref{as1}, and \ref{as_network} hold and the event-triggered threshold sequence satisfies $\sum_{t=0}^{\infty}(\eta^t)^2<\infty$. Let $\{\m{x}^t\}$ be the sequence generated by the AdaDQN algorithm. If the stepsize $\alpha$ satisfies 
	\begin{equation}\label{eq:theorem_stepsize_new}
		\alpha \leq \min \left\{ \frac{1}{2\Gamma L},\frac{\psi}{L\Psi^2}, \frac{\psi}{16 \Gamma^2 \mathcal{C}_{net}} \right\},
	\end{equation}
	where $\Gamma=\gamma+4\Psi$ and the network-related constant $\mathcal{C}_{net}$ is defined as
	\begin{align}\label{eq:c_net_def}
		\mathcal{C}_{net} =& L + \frac{2L^2}{n(1-\sigma^{2K_c})} + \frac{288(1+\sigma^{2K_c}) L^2}{n(1-\sigma^{2K_c})^3} \notag\\
		&+ 2n\sigma^{2K_c} \left(\frac{1+\sigma^{2K_c}}{1-\sigma^{2K_c}}\right) + \frac{7}{24} n\sigma^{2K_c} (1-\sigma^{2K_c}),
	\end{align}
	then the algorithm achieves the following convergence rate.
	\begin{align}\label{eq:theorem_rate_new}
		\min_{0 \le t \le T-1}\left\{ \Vert{\m{v}}^t\Vert^2+\Vert{\m{v}}^t-\m{M}{\m{v}}^t\Vert^2+\Vert\m{x}^{t}-\m{M}\m{x}^{t}\Vert^2 \right\} \leq \frac{P(\m{x}^{0},\m{v}^{0})-\underline{F} + \Theta_T}{\widehat{C} T},
	\end{align}
	where $\widehat{C}>0$ is explicitly given in the proof and
	\begin{align}\label{Theta}
		\Theta_T &= (\theta_1+\theta_2)\sum_{t=0}^{T-1} (\eta^t)^2~\text{with}\\
		\theta_1 &= 8\left(\frac{1+\sigma^{2K_c}}{1-\sigma^{2K_c}}\right)n\left(\sum_{k=0}^{K_c-1}\sigma^{k} \tau^{ K_c-1-k}\right)^2, \notag\\
		\theta_2 &= \frac{(1-\sigma^{2K_c})(1+3\sigma^{2K_c}L^2)}{24 L^2} n \left(\sum_{k=0}^{K_c-1}\sigma^{k} \tau^{ K_c-1-k}\right)^2.\notag
	\end{align}
\end{theorem}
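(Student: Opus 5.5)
The argument mirrors the proof of Theorem \ref{RIA_conv}. We sum three one-step recursions weighted according to the potential \eqref{potential+}:
\begin{itemize}
\item the objective descent \eqref{eq:lemma1_descent}, obtained from Lemma \ref{lem:descent_lemma} with $\gamma_1=c_\alpha/4$ and $\gamma_2=\Gamma^2\alpha^2$; here we simply drop the nonpositive term $-\frac{c_\alpha}{4n}\sum_i q_i^t\|\m{v}_i^t\|^2$;
\item the consensus recursion of Lemma \ref{lem 2.19}, with weight one;
\item the weighted tracking recursion of Lemma \ref{lem:adaptive_tracking_error}.
\end{itemize}
The stepsize condition \eqref{eq:theorem_stepsize_new} contains $\alpha\le 1/(2\Gamma L)$ and $\alpha\le\psi/(L\Psi^2)$. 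These give the hypotheses of Lemmas \ref{lem:adaptive_bounds} and \ref{lem:adaptive_computation}, and they also give $c_\alpha\ge \alpha\psi/2$. Adding the three inequalities should produce
\[
P(\m{x}^{t+1},\m{v}^{t+1})\le P(\m{x}^t,\m{v}^t)-\hat c_f\|\m{v}^t\|^2-\hat c_x\|\m{x}^t-\m{M}\m{x}^t\|^2-\hat c_v\|\m{v}^t-\m{M}\m{v}^t\|^2+(\theta_1+\theta_2)(\eta^t)^2 .
\]
Telescoping from $t=0$ to $T-1$ and using $P\ge \underline F$ then gives \eqref{eq:theorem_rate_new} with $\widehat C=\min\{\hat c_f,\hat c_x,\hat c_v\}$ and $\Theta_T$ as in \eqref{Theta}. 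The hypothesis $\sum_t(\eta^t)^2<\infty$ keeps $\Theta_T$ uniformly bounded.

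The core of the proof is choosing the Young parameters so that each coefficient is positive.

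First choose $\eta=\tilde\eta=\frac{1-\sigma^{2K_c}}{2\sigma^{2K_c}}$. Then
\[
1-(1+\eta)\sigma^{2K_c}=\tfrac{1-\sigma^{2K_c}}{2},\qquad 1+1/\eta=\tfrac{1+\sigma^{2K_c}}{1-\sigma^{2K_c}} .
\]
With this choice, $\theta_1$ in Lemma \ref{lem 2.19} and $\theta_2$ in Lemma \ref{lem:adaptive_tracking_error} reduce to the stated expressions.

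The coefficient of $\|\m{x}^t-\m{M}\m{x}^t\|^2$ collects the contraction $1-(1+\eta)\sigma^{2K_c}$ and subtracts $\frac{L}{2bn}$ and $\frac12\sigma^{2K_c}\frac{1-\sigma^{2K_c}}{2}$. Taking $b=\frac{4L}{n(1-\sigma^{2K_c})}$ leaves $\hat c_x$ equal to a positive multiple of $1-\sigma^{2K_c}$; I expect something like $\frac{1-\sigma^{2K_c}}{8}$, possibly reduced by the $\sigma^{2K_c}$ factor.

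The coefficient of the tracking error is
\[
\hat c_v=\frac{(1-\sigma^{2K_c})^3}{576(1+\sigma^{2K_c})L^2}-\frac{1}{2Ldn}.
\]
To keep it positive, choose $d=\frac{576(1+\sigma^{2K_c})L}{n(1-\sigma^{2K_c})^3}$, which halves the first term. This choice should account for the constant $288$ in $\mathcal C_{net}$ through the product $Ld/2$.

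The step I expect to be delicate is the coefficient of $\|\m{v}^t\|^2$. Its positive part is $\frac{\alpha\psi}{4n}-\frac{L\alpha^2\Psi^2}{8n}$. Its negative parts are quadratic in $\alpha$:
\begin{itemize}
\item $\frac{L\Gamma^2\alpha^2(2+b+d)}{2n}$ from the objective descent;
\item $2(1+1/\eta)\alpha^2\sigma^{2K_c}\Gamma^2$ from the consensus recursion;
\item $\frac{7}{12}\cdot\frac{1-\sigma^{2K_c}}{2}\alpha^2\sigma^{2K_c}\Gamma^2$ from the tracking recursion.
\end{itemize}
Multiplying through by $n$ and factoring $\Gamma^2\alpha^2$ should give a negative part of exactly $\Gamma^2\alpha^2\,\mathcal C_{net}$, possibly up to the leftover $L\Psi^2/8\le L\Gamma^2/8$, which can be absorbed into the $L$ term of $\mathcal C_{net}$.

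The resulting condition is $\frac{\alpha\psi}{4}-\Gamma^2\alpha^2\mathcal C_{net}\ge \frac{\alpha\psi}{8}$. The stepsize bound $\alpha\le \psi/(16\Gamma^2\mathcal C_{net})$ makes the subtracted term at most $\alpha\psi/16$, so $\hat c_f\ge \frac{\alpha\psi}{8n}$ or similar. This bound is independent of $K_g$, because Lemma \ref{lem:adaptive_bounds} gives $\gamma_2=\Gamma^2\alpha^2$ rather than the $K_g^2\alpha^2$ of \eqref{val_gamma2}.

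The constant bookkeeping here needs care, in particular the factor $2$ versus the stated coefficients. If the constants do not match exactly, I would loosen $\widehat C$ rather than the stepsize. The final step is to set $\widehat C=\min\{\hat c_f,\hat c_x,\hat c_v\}$ and divide the telescoped sum by $T$.
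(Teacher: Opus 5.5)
Your proposal is correct and follows the paper's proof essentially step for step: the same three recursions, and the same choices $\eta=\tilde\eta=\frac{1-\sigma^{2K_c}}{2\sigma^{2K_c}}$, $b=\frac{4L}{n(1-\sigma^{2K_c})}$, $d=\frac{576(1+\sigma^{2K_c})L}{n(1-\sigma^{2K_c})^3}$. These give $C_x=\frac{(1-\sigma^{2K_c})(3-2\sigma^{2K_c})}{8}$, which is at least your guess, and the same $C_v$; dropping the $q_i^t$ term is harmless, since the paper keeps it only for the gradient-complexity corollary.

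One bookkeeping fix concerns the $\|\m{v}^t\|^2$ coefficient. The $L$ term in $\mathcal{C}_{net}$ is exactly the contribution $L\cdot 2/2$ from $(2+b+d)$, so there is no slack to absorb the leftover $\alpha^2L\Psi^2/8$ there. Instead, bound it by $\alpha\psi/8$ using $\alpha\le\psi/(L\Psi^2)$ (equivalently, use your $c_\alpha\ge\alpha\psi/2$). This yields $C_f\ge\alpha\psi/(16n)$, as in the paper, rather than $\alpha\psi/(8n)$.
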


\begin{proof}
	Combining \eqref{eq:lemma1_descent}, Lemma \ref{lem 2.19}, and Lemma \ref{lem:adaptive_tracking_error}, we obtain
	\begin{align}\label{eq:P_recursion_final}
		P(\m{x}^{t+1},\m{v}^{t+1}) \leq& P(\m{x}^{t},\m{v}^{t}) - C_f \Vert\m{v}^t\Vert^2 - C_{x} \Vert{\m{x}}^t-\m{M}{\m{x}}^t\Vert^2 \notag\\
		&- C_{v} \Vert\m{v}^t-\m{M}\m{v}^t\Vert^2 -\frac{c_\alpha}{4n}\sum_{i=1}^n q_i^t\|\m{v}_i^t\|^2 \notag\\
		&+ (\theta_1+\theta_2)(\eta^t)^2,
	\end{align}
	where the coefficients $C_x, C_v$, and $C_f$ are given by
	\begin{align}
		C_x &= 1 - (1+\eta)\sigma^{2K_c} - \frac{L}{2bn} - \frac{1}{2}\sigma^{2K_c}(1-(1+\eta){\sigma}^{2K_c}), \label{eq:Cx_raw} \\
		C_v &= \frac{(1-(1+\eta){\sigma}^{2K_c})(1-(1+\tilde{\eta}){\sigma}^{2K_c})}{144(1+1/\tilde{\eta})L^2} - \frac{1}{2Ldn}, \label{eq:Cv_raw} \\
		C_f &= \frac{\alpha \psi - \frac{L}{2}\alpha^2 \Psi^2}{4n} - \frac{L\Gamma^2\alpha^2(2+d+b)}{2n} - 2(1+1/\eta)\sigma^{2K_c}\Gamma^2\alpha^2 \notag\\
		& \quad - \frac{7}{12}(1-(1+\eta){\sigma}^{2K_c})\alpha^2\sigma^{2K_c}\Gamma^2. \label{eq:Cf_raw}
	\end{align}
	Now we simplify these coefficients by strategically assigning the parameters ($b, d, \eta, \tilde{\eta}$).
	Selecting $\eta = \tilde{\eta} = \frac{1-\sigma^{2K_c}}{2\sigma^{2K_c}}$ enforces $1-(1+\eta)\sigma^{2K_c} = \frac{1-\sigma^{2K_c}}{2}$ and $1+1/\eta = \frac{1+\sigma^{2K_c}}{1-\sigma^{2K_c}}$.
	Substituting $b = \frac{4L}{n(1-\sigma^{2K_c})}$ into \eqref{eq:Cx_raw} yields
	$$
	C_x = \frac{1-\sigma^{2K_c}}{2} - \frac{1-\sigma^{2K_c}}{8} - \frac{\sigma^{2K_c}(1-\sigma^{2K_c})}{4} = \frac{1-\sigma^{2K_c}}{8} (3 - 2\sigma^{2K_c}) > 0.
	$$
	Substituting $d = \frac{576(1+\sigma^{2K_c}) L}{n (1-\sigma^{2K_c})^3}$ into \eqref{eq:Cv_raw} yields
	$$
	C_v = \frac{(1-\sigma^{2K_c})^3}{576(1+\sigma^{2K_c})L^2} - \frac{(1-\sigma^{2K_c})^3}{1152(1+\sigma^{2K_c})L^2} = \frac{(1-\sigma^{2K_c})^3}{1152(1+\sigma^{2K_c})L^2} > 0.
	$$
	Embedding the defined parameters ($\eta, b, d$) into \eqref{eq:Cf_raw}, the coefficient $C_f$ evaluates to
	$$
	C_f = \frac{\alpha}{n} \left[ \frac{\psi}{4} - \alpha \left( \frac{L\Psi^2}{8} + \Gamma^2 \mathcal{C}_{net} \right) \right],
	$$
	where $\mathcal{C}_{net}$ is given by \eqref{eq:c_net_def}. By enforcing the stepsize condition \eqref{eq:theorem_stepsize_new}, we derive
	$$
	\alpha\frac{L\Psi^2}{8}\leq\frac{\psi}{8},\qquad
	\alpha\Gamma^2\mathcal{C}_{net}\leq\frac{\psi}{16},
	$$
	which implies
	$$
	C_f \geq \frac{\alpha}{n} \left[ \frac{\psi}{4} - \frac{\psi}{8} - \frac{\psi}{16} \right] = \frac{\alpha \psi}{16n} > 0.
	$$
	Finally, summing \eqref{eq:P_recursion_final} from $t=0$ to $t=T-1$ and using the lower boundedness of the potential function yields the stated result \eqref{eq:theorem_rate_new} where $\widehat{C}:=\min\left\{C_f,C_x,C_v\right\}>0$.
\end{proof}

\begin{corollary}[Gradient complexity of AdaDQN]\label{cor:ada_oracle}
Let
\[
	D_T:=P(\m{x}^0,\m{v}^0)-\underline F+\Theta_T
\]
and count one local gradient evaluation for every executed nonmandatory step and one mandatory gradient evaluation per node and outer iteration. Then
\begin{equation}\label{eq:ada_oracle_T}
	\mathcal{G}_{\rm Ada}(T)
	:=n+nT+\sum_{t=0}^{T-1}\sum_{i=1}^nq_i^t
	\leq n+nT+
	\frac{C_{\rm loc}\alpha^2D_T}{C_f\tilde\varepsilon^2}.
\end{equation}
In particular, letting
\[
	D_\infty:=P(\m{x}^0,\m{v}^0)-\underline F
	+(\theta_1+\theta_2)\sum_{t=0}^\infty(\eta^t)^2,
\]
AdaDQN returns an iterate with $\mathcal{R}^t\leq\delta$ using
\begin{equation}\label{eq:ada_oracle_delta}
	\mathcal{G}_{\rm Ada}(\delta)
	\leq n+n\left\lceil\frac{D_\infty}{\widehat C\delta}\right\rceil
	+\frac{C_{\rm loc}\alpha^2D_\infty}{C_f\tilde\varepsilon^2}.
\end{equation}
Since $C_f\geq\alpha\psi/(16n)$, this gives
$\mathcal{G}_{\rm Ada}(\delta)=\mathcal{O}(n/\delta+n\alpha\tilde\varepsilon^{-2})$, with no dependence on the maximum local-update budget $K_g$.
\end{corollary}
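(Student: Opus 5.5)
The plan is to read off the non-mandatory computation from the extra descent term already kept in the potential recursion \eqref{eq:P_recursion_final}, and then convert each unit of that descent into a bound on $q_i^t$ via Lemma \ref{lem:adaptive_computation}.

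\textbf{Step 1: isolate the non-mandatory descent.} Summing \eqref{eq:P_recursion_final} from $t=0$ to $T-1$ and using $P\ge \underline F$ (from Assumption \ref{as0}) gives
\[
\sum_{t=0}^{T-1}\Big(C_f\|\m{v}^t\|^2+\frac{c_\alpha}{4n}\sum_{i=1}^n q_i^t\|\m{v}_i^t\|^2\Big)\le D_T,
\]
after dropping the nonnegative $C_x$ and $C_v$ terms. I would not use the term $\frac{c_\alpha}{4n}\sum_i q_i^t\|\m{v}_i^t\|^2$ directly. Instead, I would apply \eqref{eq:adaptive_step_bound} together with $\chi_i^t\ge\tilde\varepsilon$, which gives
\[
q_i^t\le C_{\rm loc}\,\alpha^2\|\m{v}_i^t\|^2/\tilde\varepsilon^2 .
\]
Summing over $i$ yields $\sum_i q_i^t\le C_{\rm loc}\alpha^2\|\m{v}^t\|^2/\tilde\varepsilon^2$. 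Summing over $t$ and bounding $\sum_t\|\m{v}^t\|^2\le D_T/C_f$ from the display above then gives
\[
\sum_{t,i}q_i^t\le \frac{C_{\rm loc}\alpha^2D_T}{C_f\tilde\varepsilon^2}.
\]
Adding the $n$ initial gradients and the $nT$ mandatory gradients (one per node per outer iteration) gives \eqref{eq:ada_oracle_T}.

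\textbf{Step 2: convert to a tolerance $\delta$.} Since $\Theta_T$ is nondecreasing in $T$ and $\sum_t(\eta^t)^2<\infty$, we have $D_T\le D_\infty$ for all $T$. Theorem \ref{thm:global_convergence_ada} gives $\min_{t<T}\mathcal{R}^t\le D_\infty/(\widehat C T)$. Choosing $T=\lceil D_\infty/(\widehat C\delta)\rceil$ therefore guarantees some $t<T$ with $\mathcal{R}^t\le\delta$. Running only up to that $T$ and inserting $D_T\le D_\infty$ into the Step 1 bound yields \eqref{eq:ada_oracle_delta}.

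\textbf{Step 3: order estimate.} Using $C_f\ge\alpha\psi/(16n)$ from the proof of Theorem \ref{thm:global_convergence_ada}, the last term is at most
\[
\frac{16n\,C_{\rm loc}\,\alpha D_\infty}{\psi\,\tilde\varepsilon^{2}}.
\]
Under \eqref{eq:theorem_stepsize_new}, $\frac{L}{2}\alpha\Psi^2\le\psi/2$ and $\alpha\le 1/(2\Gamma L)$, so the denominator of $C_{\rm loc}$ is at least $\psi/2$ and its numerator is bounded by constants. Hence $C_{\rm loc}=\mathcal{O}(1)$ independently of $K_g$. The quantities $\psi,\Psi,\Gamma$ and the stepsize bound also do not involve $K_g$. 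Consequently $D_\infty$ and $\widehat C$ are $K_g$-free, and the total is $\mathcal{O}(n/\delta+n\alpha\tilde\varepsilon^{-2})$.

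\textbf{Main obstacle.} The delicate point is making sure none of these constants secretly depends on $K_g$. The candidates are $C_{\rm loc}$ through $\alpha$, and $\widehat C$ through $C_x$ and $C_v$, which depend only on $\sigma^{2K_c}$ and $L$. I would check each in turn, and then confirm that counting only executed steps in $q_i^t$ is consistent with how $\ell_i^t$ and $q_i^t$ are defined in Lemma \ref{lem:adaptive_computation}.
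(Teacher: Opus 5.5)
Your proposal is correct and follows the paper's own argument: you bound each $q_i^t$ via \eqref{eq:adaptive_step_bound} with $\chi_i^t\ge\tilde\varepsilon$, control $\sum_t\|\m{v}^t\|^2\le D_T/C_f$ by summing \eqref{eq:P_recursion_final}, and then take $T=\lceil D_\infty/(\widehat C\delta)\rceil$ from the rate \eqref{eq:theorem_rate_new}. Your Step 3 only makes explicit the $K_g$-independence of $C_{\rm loc}$, which the paper leaves implicit: under the stepsize condition, $C_{\rm loc}\le 5\Psi^2\Gamma/(2\psi)$.
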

\begin{proof}
Since $\chi_i^t\geq\tilde\varepsilon$, \eqref{eq:adaptive_step_bound} gives
\[
	\sum_{t=0}^{T-1}\sum_{i=1}^nq_i^t
	\leq\frac{C_{\rm loc}\alpha^2}{\tilde\varepsilon^2}
	\sum_{t=0}^{T-1}\|\m{v}^t\|^2.
\]
Summing \eqref{eq:P_recursion_final} yields
$\sum_{t=0}^{T-1}\|\m{v}^t\|^2\leq D_T/C_f$, which proves \eqref{eq:ada_oracle_T}. The rate \eqref{eq:theorem_rate_new} then gives \eqref{eq:ada_oracle_delta}.
\end{proof}

\section{Numerical experiments}
In this section, we would like to examine the performance of our developed algorithms that contain Algorithm \ref{alg:Framwork1}, and AdaDQN ({Algorithm \ref{alg: Adpt. DNQN}}), as the following outline:
\begin{itemize}
\item[1.] Investigate the difference in computation and communication efficiency between {Algorithm \ref{alg:Framwork1}} and AdaDQN.
\item[2.] Compare AdaDQN with other state-of-the-art algorithms.
\end{itemize}
The considered optimization problem is smooth but nonconvex, over a connected undirected network with edge density $d \in (0,1]$.
For the generated network, we choose the Metropolis constant edge weight matrix \cite{xiao2007distributed} as the mixing matrix, that is
\begin{equation*}
	\tilde{W}_{i j}=\left\{\begin{array}{cl}
		\frac{1}{\max \{\operatorname{deg}(i), \operatorname{deg}(j)\}+1}, & \text { if }(i, j) \in \mathcal{E}, \\
		0, & \text { if }(i, j) \notin \mathcal{E} \text { and } i \neq j, \\
		1-\sum_{k \in \mathcal{N}_i/ \{i\}} \tilde{W}_{i k}, & \text { if } i=j,
	\end{array}\right.
\end{equation*}
where $(i, j) \in \mathcal{E}$ indicates there is an edge between 
node $i$ and node $j$, and $\operatorname{deg}(i)$ means the degree of node $i$. 
In our experiments, we introduce the communication volume which can be calculated as follows: 
\begin{align*}\notag
	&\text{Communication volume}
	=\text{~number of iterations}\\
	&\times \text{number of communication rounds per iteration}\\
	&\times \text{number of edges, i.e., }dn(n-1)/2\\
	&\times \text{dimension of transmitted vectors on each edge}.
\end{align*}
We set the number of nodes $n=10$ and the edge density $d=5/9$ for the network, and set $\eta^t=\kappa \rho^{t}$ and $ \tau^{s}=\rho^s$ for the AdaDQN algorithm, where $\kappa>0$ and $\rho \in (0,1)$. For all comparison algorithms, we initialize $\m{x}^0=\m{0}$. From the first-order stationarity given in \eqref{stationarity}, the success of each algorithm is measured by the optimality error stated as 
\begin{equation*}
	\left\Vert \frac{1}{n}\sum_{i=1}^n\nabla f_i(\m{x}^t_{i}) \right\Vert+\Vert\m{x}^t-\m{M}\m{x}^t\Vert.
\end{equation*}
All experiments are coded in MATLAB R2017b and run on a laptop with Intel Core i5-9300H CPU, 16GB RAM, and Windows 10 operating system.
\subsection{Investigation on computation and communication efficiency}
We consider the nonconvex decentralized binary classification problem. Using a logistic regression formulation with a nonconvex regularization, the optimization problem is given by
\begin{equation}\label{noncovex_logistic_problem}
	\mathop {\min }\limits_{\m{z} \in {\mathbb{R}^p}} \sum_{i=1}^n  \sum_{j=1}^{n_i} \log \left(1+\exp (-b_{ij} \m{a}_{ij}\tr \m{z} ) \right)+\hat{\lambda} \sum_{k=1}^p \frac{\m{z}_{[k]}^2}{1+\m{z}_{[k]}^2},
\end{equation}
where $\m{a}_{ij} \in \mathbb{R}^p$ is the feature vector, $ b_{ij} \in \{-1,+1\}$ represents the label, $\m{z}_{[k]}$ denotes the $k$-th component of the vector $\m{z}$, and $\hat{\lambda}>0$ is the regularization parameter.
The experiments are conducted on four datasets in Table~\ref{table2}
from the LIBSVM library: \textbf{mushroom}, \textbf{ijcnn1}, \textbf{w8a} and \textbf{a9a}.
The regularization parameter $\hat{\lambda}=1$. 

\begin{table}[H]
	\caption{Datasets}\label{table2}
	\centering
	\begin{tabular}{ccc}
		\hline
		{Dataset}&\# of samples ($\sum_{i=1}^n n_i$)  &\# of features ($p$)  \\
		\hline
		\textbf{mushroom}&8120 &112\\

		\textbf{ijcnn1}&49990&22\\

		\textbf{w8a}&49740&300\\

		\textbf{a9a}&32560&123\\
		\hline
	\end{tabular}
\end{table}

We validate the effectiveness of the proposed double-level adaptive mechanism by ablation experiments that compare AdaDQN with the following variants: 
\begin{itemize}
	\item Fixed-DQN ($K_g,K_c$): the variant with fixed number of computation ($K_g$) and communication ($K_c$), i.e., Algorithm \ref{alg:Framwork1}. Consider Fixed-DQN (1,1) and Fixed-DQN (5,5).
	\item AdaComp-DQN: the variant that only uses adaptive local updates and a fixed number of communication rounds.
	\item AdaComm-DQN: the variant that only uses adaptive broadcasts and a fixed number of local updates.
\end{itemize}
For datasets \textbf{mushroom}(\textbf{ijcnn1};\textbf{w8a};\textbf{a9a}), algorithm parameters are set as follows their better performance and parameter notations follow the source papers. We set
\begin{itemize}
	\item $\alpha=0.22(0.32;0.26;0.34)$ and $\varrho=0.05$ in Fixed-DQN (1,1);
	\item $\alpha=0.82(0.86;0.3;0.88)$ and $\varrho=0.05$ in Fixed-DQN (5,5);
	\item $K_g=5$, $K_c=5$, $\alpha=0.74(0.6;0.44;0.69)$, $\varepsilon=1$, and $\varrho=0.05$ in AdaComp-DQN; 
	\item $K_g=5$, $K_c=5$, $\alpha=0.58(0.4;0.4;0.72)$, $\kappa=1$, $\rho=0.25(0.25;0.25;0.2)$, and $\varrho=0.05$ in AdaComm-DQN; 
	\item  $K_g=5$, $K_c=5$, $\alpha=0.89(0.88;0.89;0.88)$, $\varepsilon=1$, $\kappa=1$, $\rho=0.25(0.25;0.25;0.2)$, $\varrho=0.05$ in AdaDQN.
\end{itemize}
Figs. \ref{adap_iter} to \ref{adap_time} present optimality error against iteration number, communication volume, gradient evaluation number, and CPU time, respectively.
As expected, Fixed-DQN (5,5), AdaComp-DQN, AdaComm-DQN, and AdaDQN converge significantly faster in terms of iteration number compared to Fixed-DQN (1,1). 
However, Fixed-DQN (5,5) employs fixed multiple steps of computation and communication, leading to high cost of gradient evaluations and communication volume. 
The ablation study shows that AdaComm-DQN saves communication volume(e.g. steepest in Fig. \ref{m_c}) but incurs excessive computation overhead (e.g. slowest in Fig. \ref{i_g}) while  AdaComp-DQN saves computation cost(e.g. steepest in Fig. \ref{m_g}) but suffers from high communication volume (e.g. slowest in Fig. \ref{i_c}). Adaptive DQN has significant superiority in both communication and computation (e.g. steepest in Figs. \ref{i_c} and \ref{i_g}).

To explicitly visualize the computation-communication trade-off, we plot the total gradient evaluations (Y-axis) against total communication volume (X-axis) required to reach an accuracy of $\epsilon=10^{-10}$ in Fig. \ref{adap_scatter}. The AdaDQN (black star) is located at the bottom-left corner relative to the other algorithms. This clearly demonstrates that AdaDQN achieves a better computation-communication trade-off, minimizing both resources simultaneously rather than sacrificing one for the other.

\begin{table}
	\small
		\caption{Iteration number / Communication volume / Gradient evaluation number and CPU time (s) of different algorithms to achieve $\epsilon=10^{-10}$ accuracy on datasets \textbf{mushroom}, \textbf{ijcnn1}, \textbf{w8a}, \textbf{a9a}.}
	\centering
\begin{tabular}{ccccc}
	\hline
	& mushroom & ijcnn1 & w8a & a9a \\
	\hline
	Fixed-DQN (1,1) & 
	\makecell{92/1236480 \\ 920/0.3497} & 
	\makecell{54/142560 \\ 540/0.6992} & 
	\makecell{72/2592000 \\ 720/1.0023} & 
	\makecell{52/767520 \\ 520/0.5389} \\
\hline
	Fixed-DQN (5,5) & 
	\makecell{14/940800 \\ 700/0.2584} & 
	\makecell{13/171600 \\ 650/0.7380} & 
	\makecell{13/2340000 \\ 650/0.8368} & 
	\makecell{12/885600 \\ 600/0.5365} \\
\hline
	AdaComp-DQN & 
	\makecell{14/940800 \\ 480/0.1862} & 
	\makecell{13/171600 \\ 370/0.4573} & 
	\makecell{14/2520000 \\ 500/0.6560} & 
	\makecell{12/885600 \\ 330/0.3347} \\
\hline
	AdaComm-DQN & 
	\makecell{14/624960 \\ 700/0.2471} & 
	\makecell{14/102564 \\ 700/0.8250} & 
	\makecell{14/1443600 \\ 700/0.8918} & 
	\makecell{11/484374 \\ 550/0.5124} \\
\hline
	AdaDQN & 
	\makecell{14/636496 \\ 520/0.2013} & 
	\makecell{14/101992 \\ 300/0.3963} & 
	\makecell{14/1446600 \\ 340/0.4697} & 
	\makecell{11/482406 \\ 240/0.2484} \\
	\hline
\end{tabular}
\end{table}

\begin{figure*}[!t]
	\centering
	\subfloat[mushroom]{\includegraphics[width=1.3in]{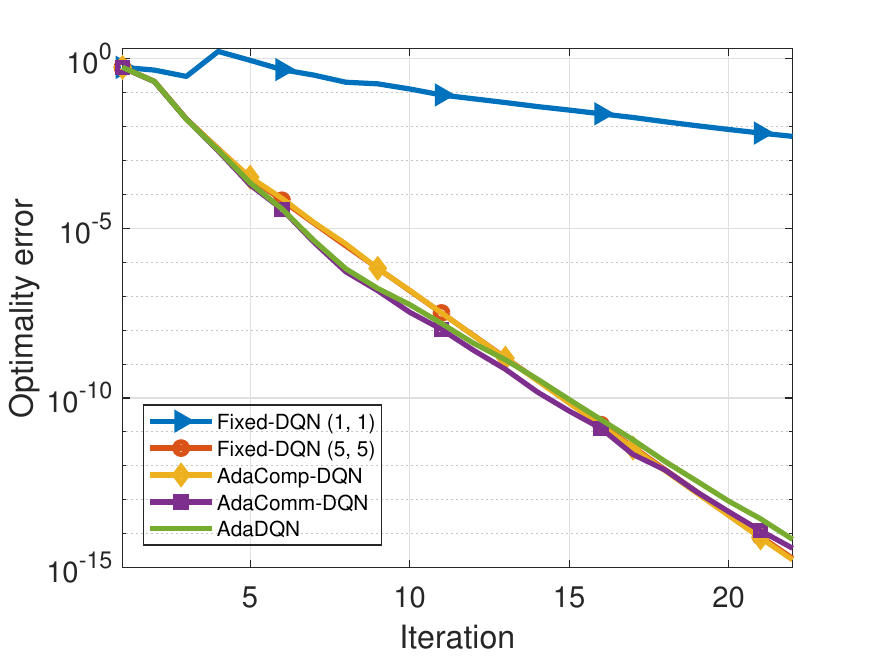}%
		\label{m_i}}
	\subfloat[ijcnn1]{\includegraphics[width=1.3in]{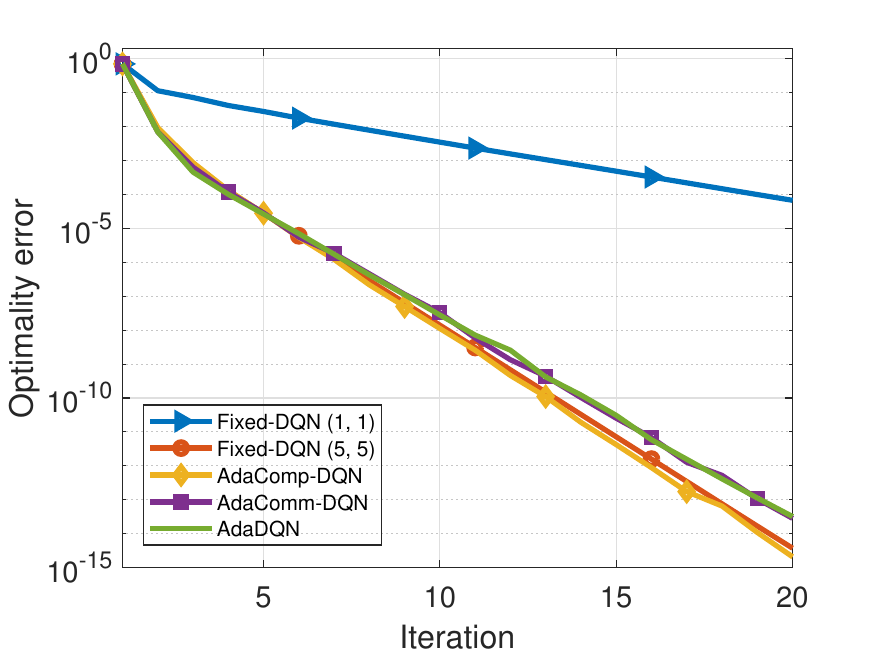}%
		\label{i_i}}
	\subfloat[w8a]{\includegraphics[width=1.3in]{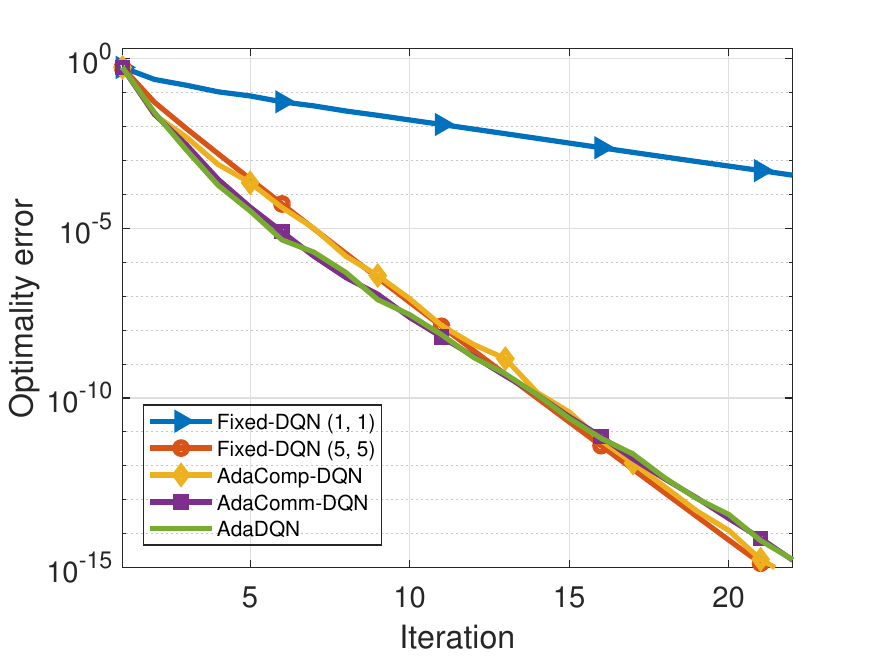}%
		\label{w_i}}
	\subfloat[a9a]{\includegraphics[width=1.3in]{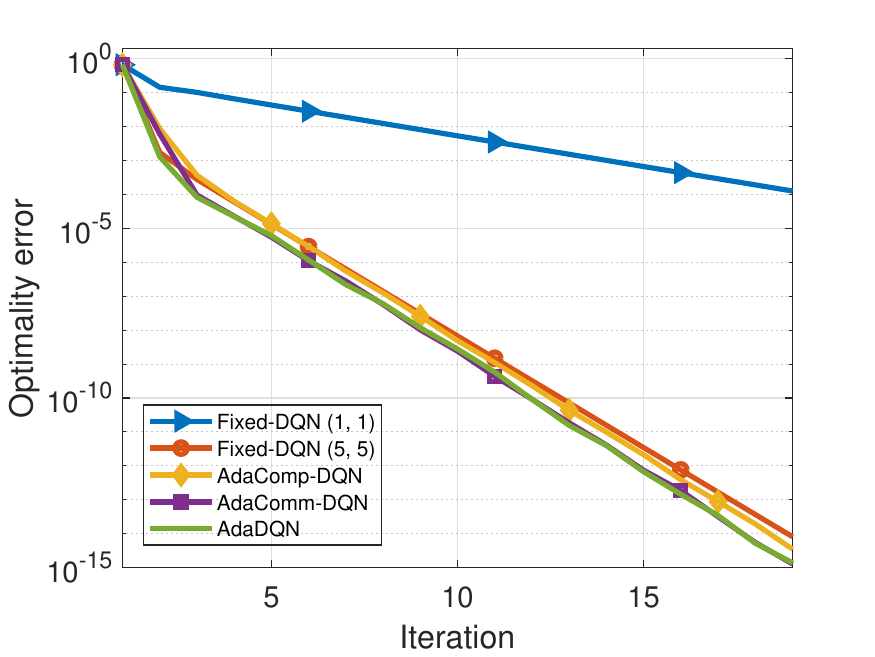}%
		\label{a_i}}
	\caption{Optimality error of comparison algorithms for minimizing the nonconvex logistic regression problem \eqref{noncovex_logistic_problem} on different datasets w.r.t. iteration.}
	\label{adap_iter}
\end{figure*}

\begin{figure*}[!t]
	\centering
	\subfloat[mushroom]{\includegraphics[width=1.3in]{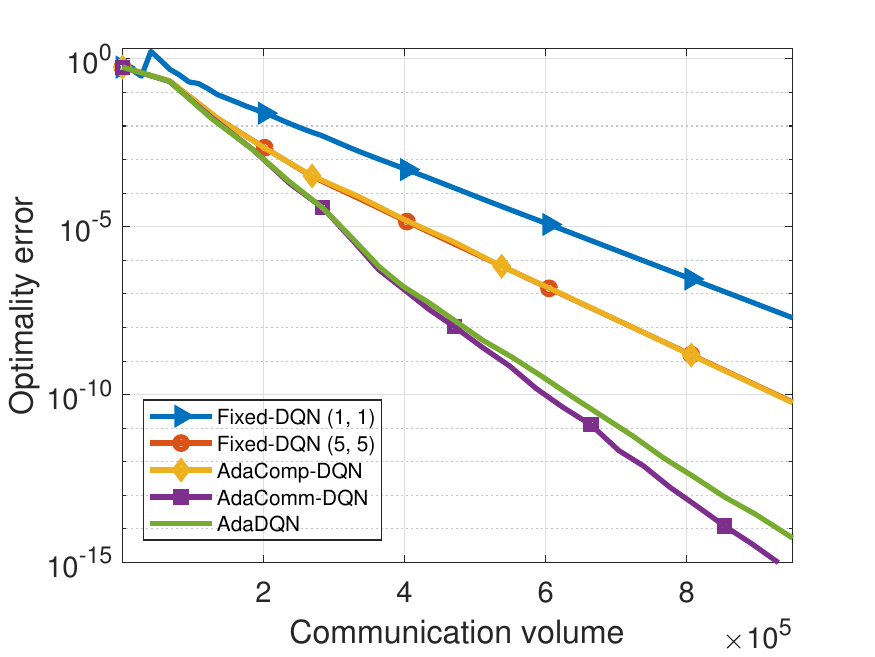}%
		\label{m_c}}
	\subfloat[ijcnn1]{\includegraphics[width=1.3in]{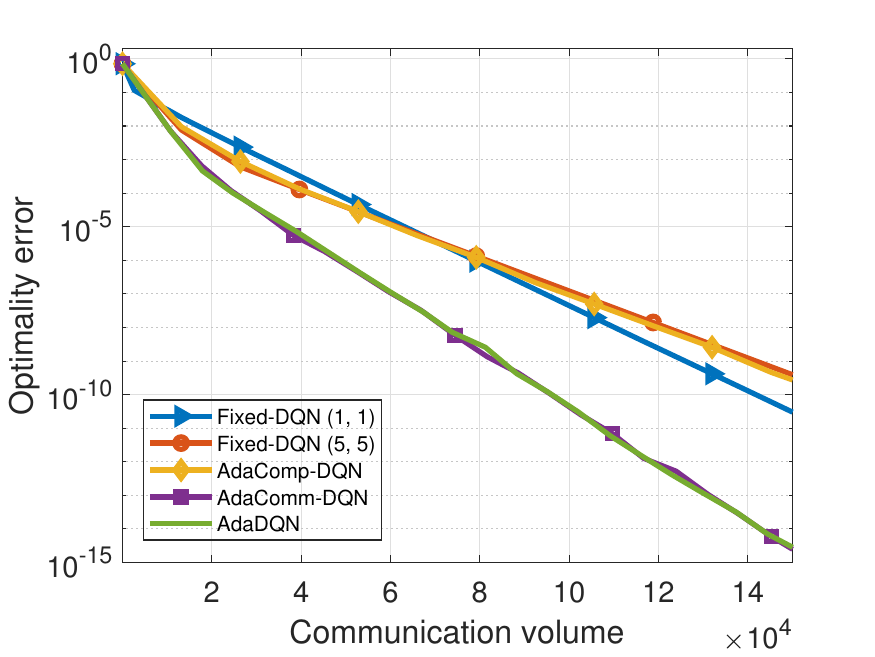}%
		\label{i_c}}
	\subfloat[w8a]{\includegraphics[width=1.3in]{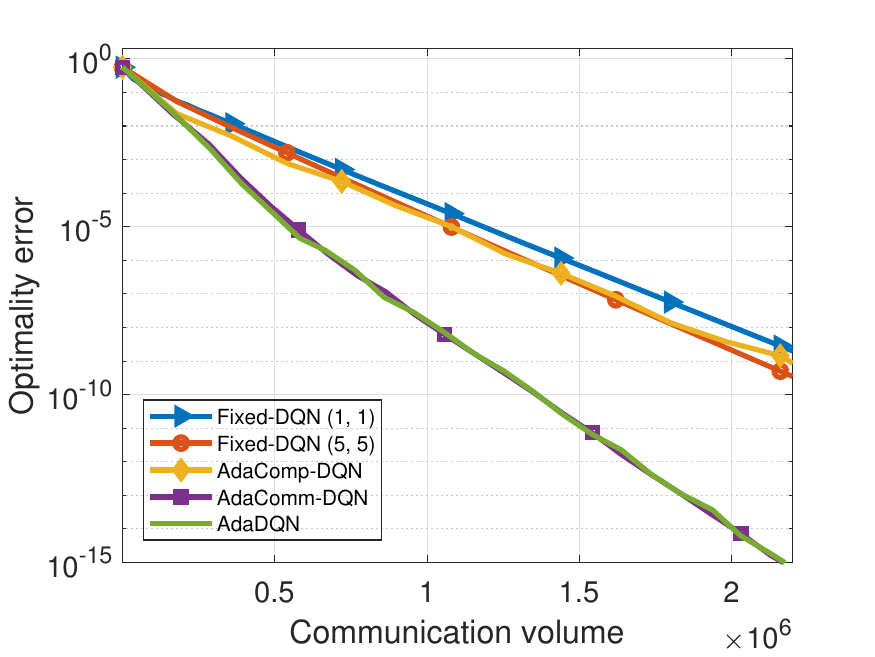}%
		\label{w_c}}
	\subfloat[a9a]{\includegraphics[width=1.3in]{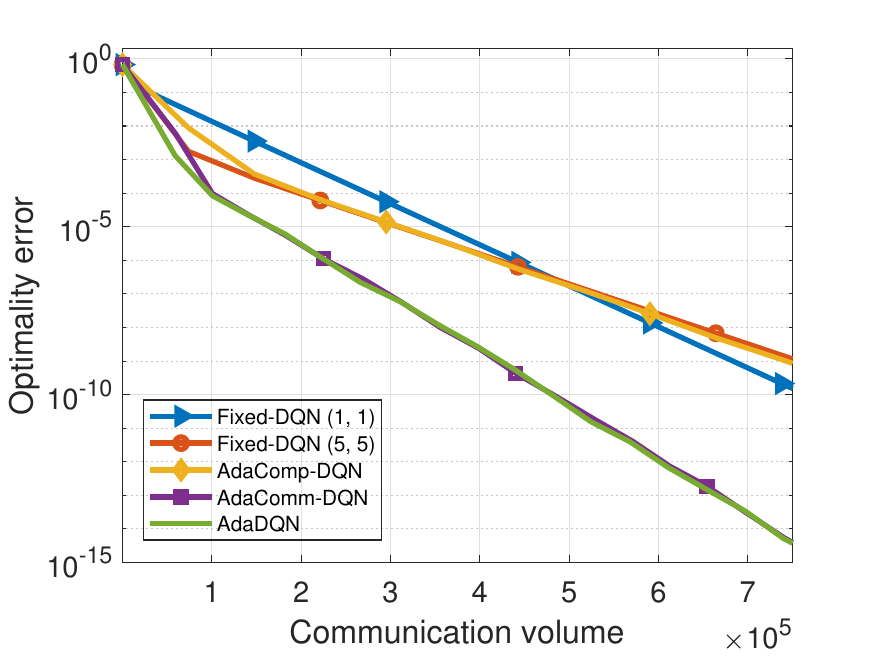}%
		\label{a_c}}
	\caption{Optimality error of comparison algorithms for minimizing the nonconvex logistic regression problem \eqref{noncovex_logistic_problem} on different datasets w.r.t. communication volume.}
	\label{adap_com}
\end{figure*}

\begin{figure*}[!t]
	\centering
	\subfloat[mushroom]{\includegraphics[width=1.3in]{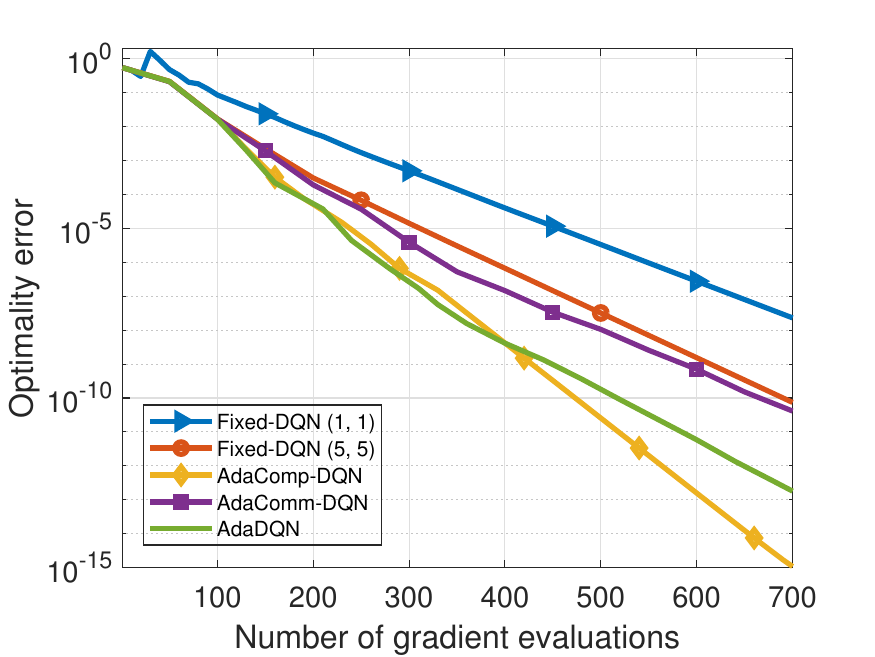}%
		\label{m_g}}
	\subfloat[ijcnn1]{\includegraphics[width=1.3in]{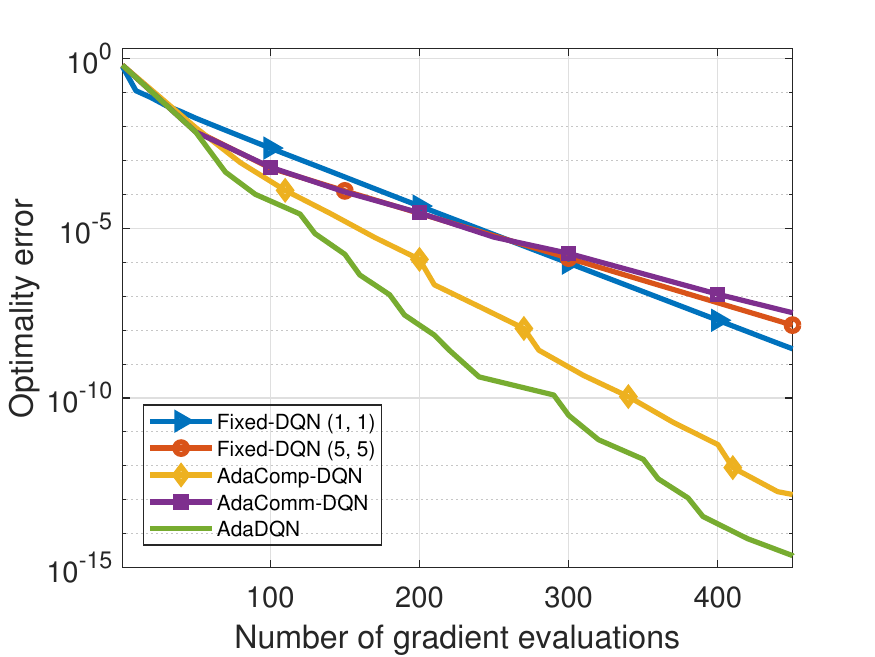}%
		\label{i_g}}
	\subfloat[w8a]{\includegraphics[width=1.3in]{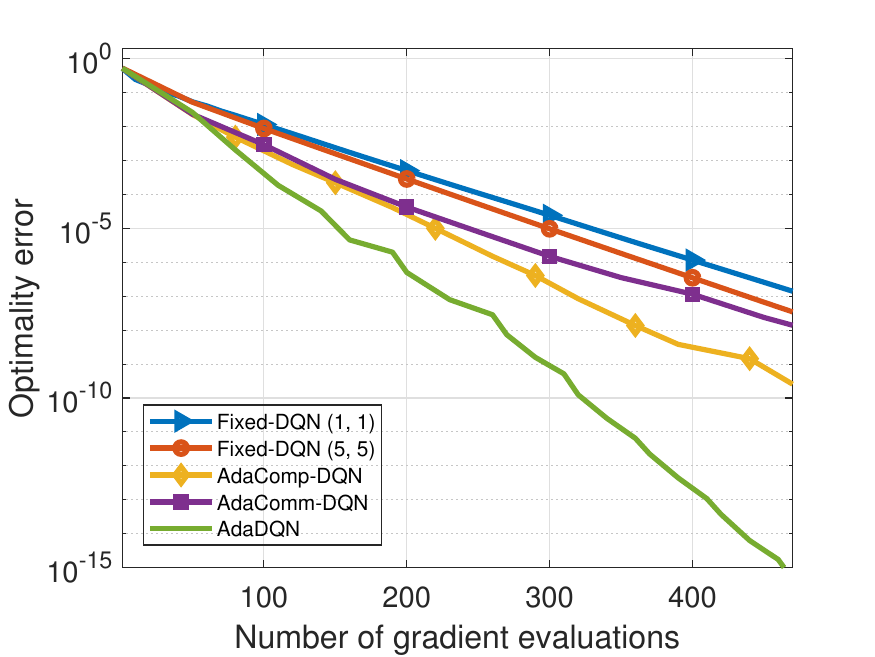}%
		\label{w_g}}
	\subfloat[a9a]{\includegraphics[width=1.3in]{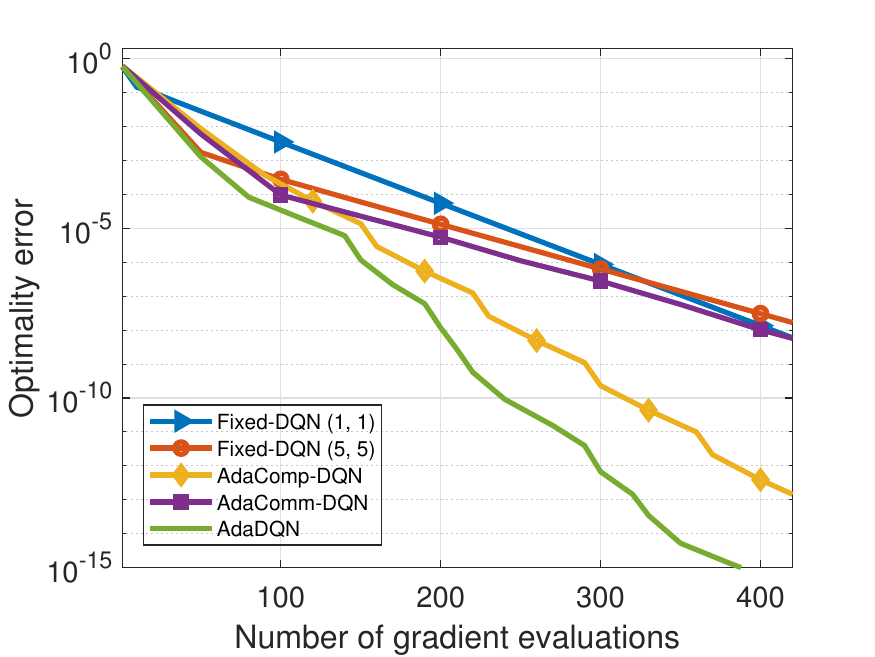}%
		\label{a_g}}
	\caption{Optimality error of comparison algorithms for minimizing the nonconvex logistic regression problem \eqref{noncovex_logistic_problem} on different datasets w.r.t. gradient evaluation number.}
	\label{adap_grad}
\end{figure*}

\begin{figure*}[!t]
	\centering
	\subfloat[mushroom]{\includegraphics[width=1.3in]{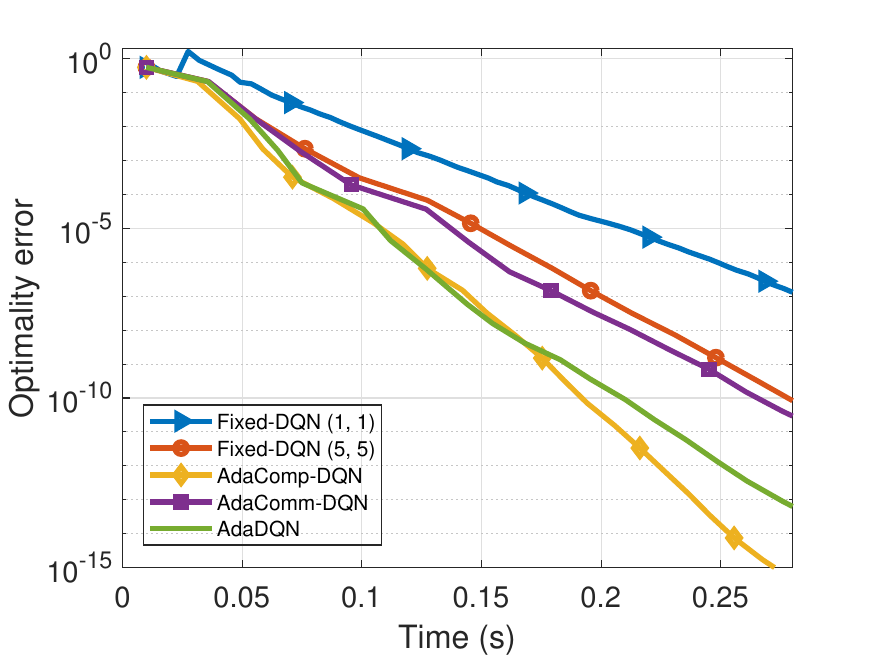}%
		\label{m_t}}
	\subfloat[ijcnn1]{\includegraphics[width=1.3in]{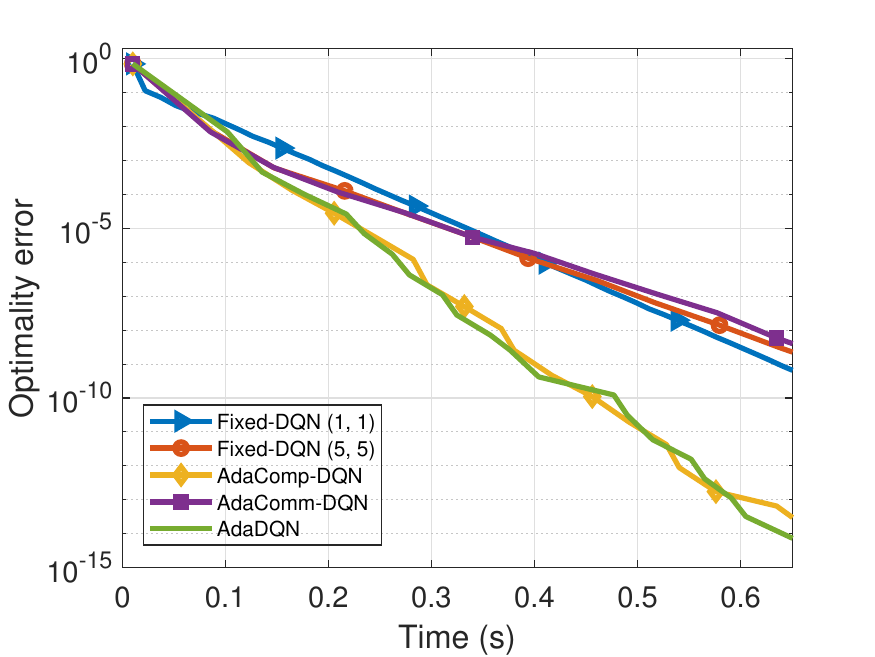}%
		\label{i_t}}
	\subfloat[w8a]{\includegraphics[width=1.3in]{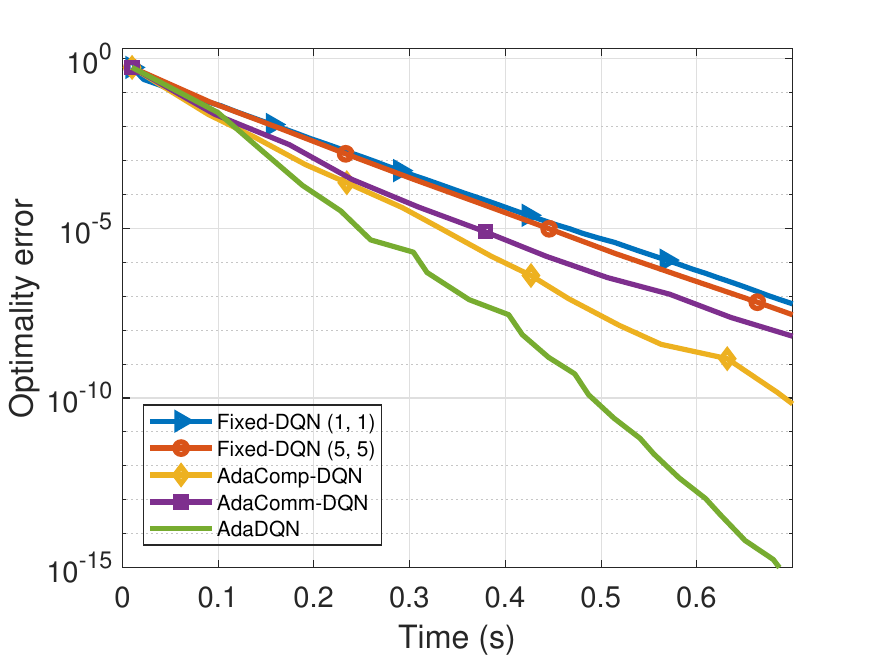}%
		\label{w_t}}
	\subfloat[a9a]{\includegraphics[width=1.3in]{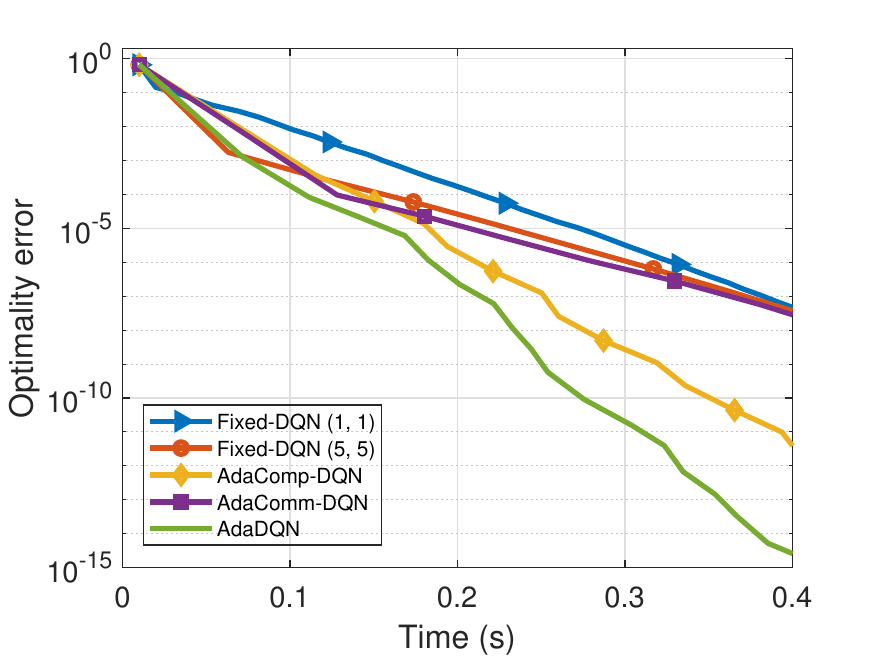}%
		\label{a_t}}
	\caption{Optimality error of comparison algorithms for minimizing the nonconvex logistic regression problem \eqref{noncovex_logistic_problem} on different datasets w.r.t. CPU time.}
	\label{adap_time}
\end{figure*}

\begin{figure*}[!t]
	\centering
	\subfloat[mushroom]{\includegraphics[width=1.3in]{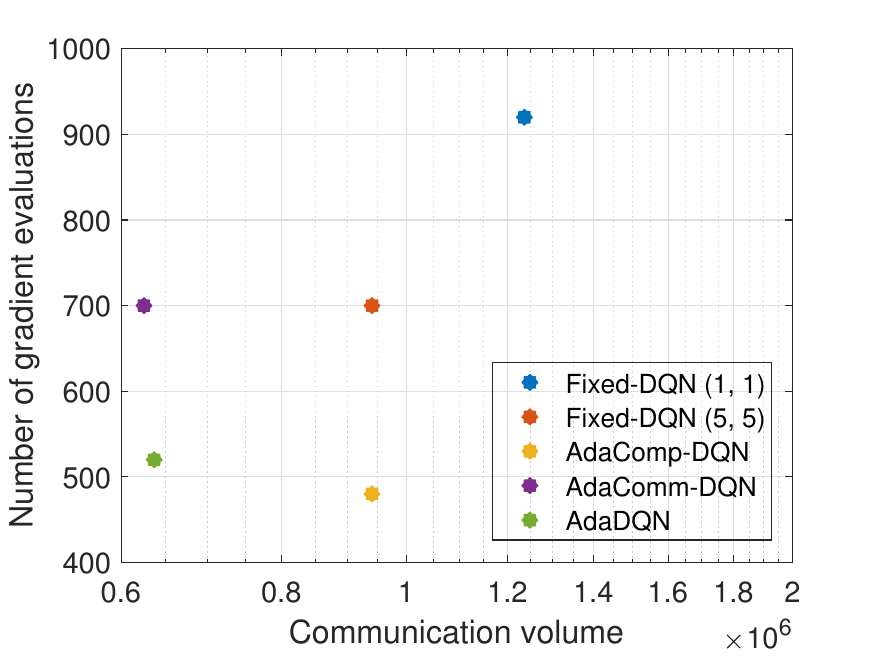}%
		\label{m_s}}
	\subfloat[ijcnn1]{\includegraphics[width=1.3in]{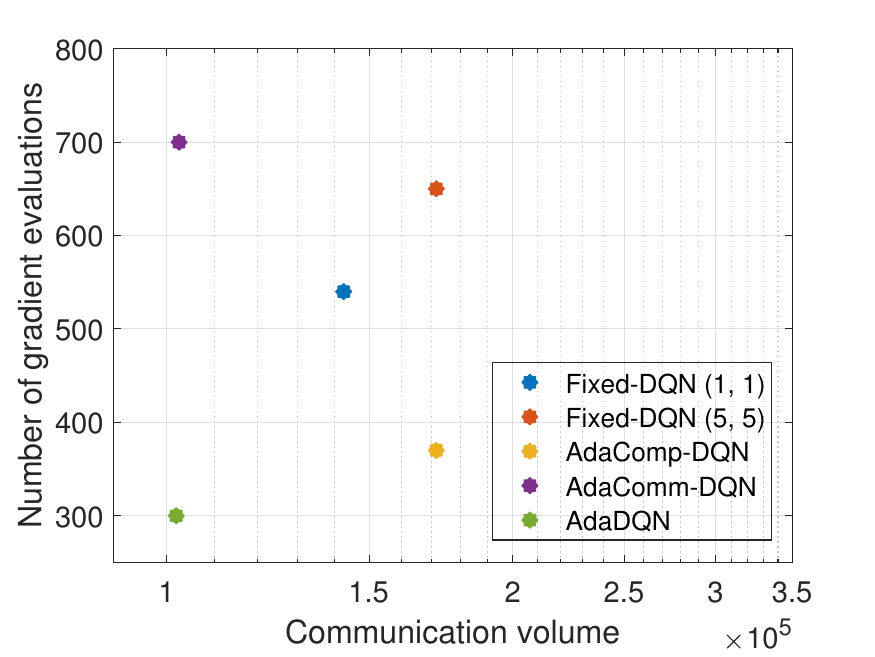}%
		\label{i_s}}
	\subfloat[w8a]{\includegraphics[width=1.3in]{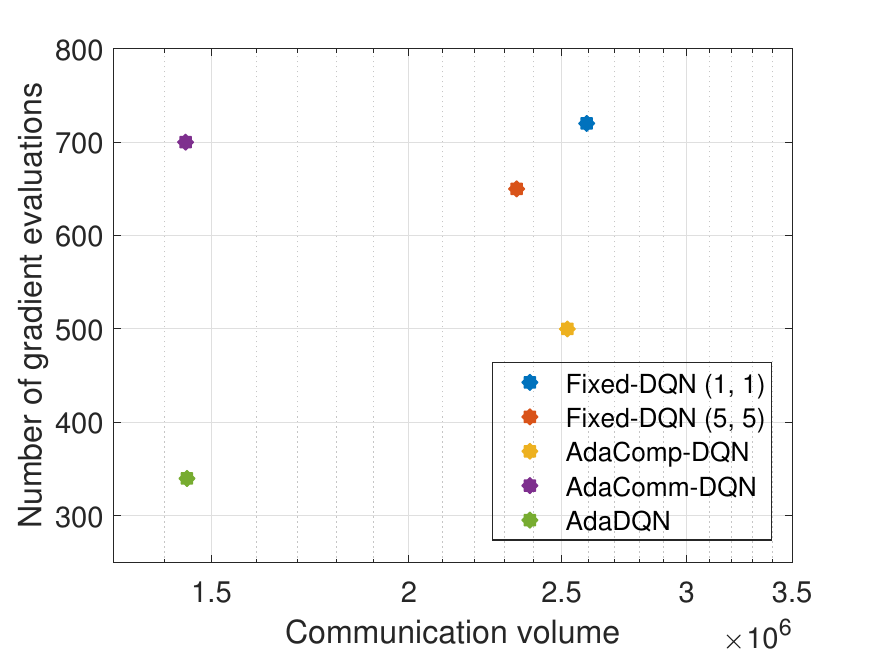}%
		\label{w_s}}
	\subfloat[a9a]{\includegraphics[width=1.3in]{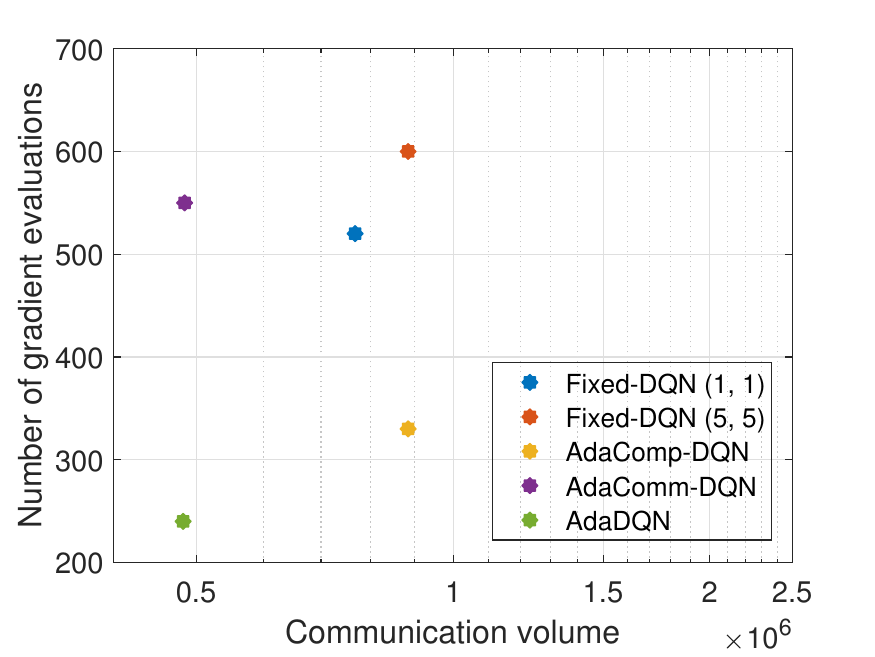}%
		\label{a_s}}
	\caption{Balance between gradient evaluation number and communication volume on different datasets. The coordinates of each point represent the
	gradient evaluation number (vertical axis) and communication volume (horizontal axis) required to achieve an accuracy of $\epsilon=10^{-10}$.}
	\label{adap_scatter}
\end{figure*}

\subsection{Comparison with existing relevant algorithms}
In this part of our experiment, we compare our AdaDQN with several most relevant algorithms, including ET-DOGT\cite{liu2023event}, DETNO \cite{gao2024distributed}, LSGT \cite{ge2023gradient}, and GTA \cite{berahas2024balancing}.

We first consider the following linear regression problem
\begin{equation}\label{linear_problem}
	\mathop {\min }\limits_{\m{z} \in {\R^p}} \sum_{i=1}^n \frac{1}{2} \m{z}\tr \m{A}_i \m{z} + \m{b}_i\tr \m{z} +\hat{\lambda} \sum_{k=1}^p \frac{\m{z}_{[k]}^2}{1+\m{z}_{[k]}^2},
\end{equation}
where $\m{A}_i \in \R^{p \times p}$ and $\m{b}_i \in \R^p$ are private data available to node $i$. We construct $\m{A}_i = \m{Q}\tr \operatorname {diag} \{ a_1,...,a_p \} \m{Q}$, where $\m{Q}$ is a random orthogonal matrix. We set $a_1=1$ and $a_p$ as an arbitrarily large number, 
and generate $a_j \sim \operatorname{U}(1,2)$ for $j=2, \ldots, p-1$, where $\operatorname{U}(1,2)$ represents the uniform distribution from 1 to 2. We consider two cases with $a_p=10$ and $a_p=1000$, where $p$ is set to 500 in both scenarios. For the problem with $a_p=10(1000)$,
we set 
\begin{itemize}
		\item $\alpha=0.062(0.022)$ and $\eta=1.4(1.9)$ in ET-DOGT;
	\item $\alpha=0.04(0.0062)$, $\beta=0.1(0.09)$, $\gamma=0.9(1)$, and $S=0.9(0.98)$ in DETNO;
	\item $E=5$ and $\gamma=0.011(0.0044)$ in LSGT;
	\item $n_g=5$, $n_c=5$, $\alpha=0.15(0.024)$, and $\m{Z}_l$ for $l=1,2,3,4$ in GTA ;
	\item $K_g=5$, $K_c=5$, $\alpha=0.29(0.14)$, $\varepsilon=1$, $\kappa=1$, $\rho=0.26(0.5)$, and $\varrho=0.05$ in AdaDQN.
\end{itemize}
In the well-conditioned case ($a_p=10$, Fig. \ref{cond10}), GTA and AdaDQN perform comparably. However, in the ill-conditioned case ($a_p=1000$, Fig. \ref{cond1000}), AdaDQN outperforms all other methods (ET-DOGT, DETNO, LSGT, GTA) by a large margin. This verifies that incorporating quasi-Newton approximations captures essential curvature information, enabling the algorithm to be more robust and efficient to badly scaled problems than gradient-based methods.

\begin{figure*}[!t]
	\centering
	\subfloat[]{\includegraphics[width=1.3in]{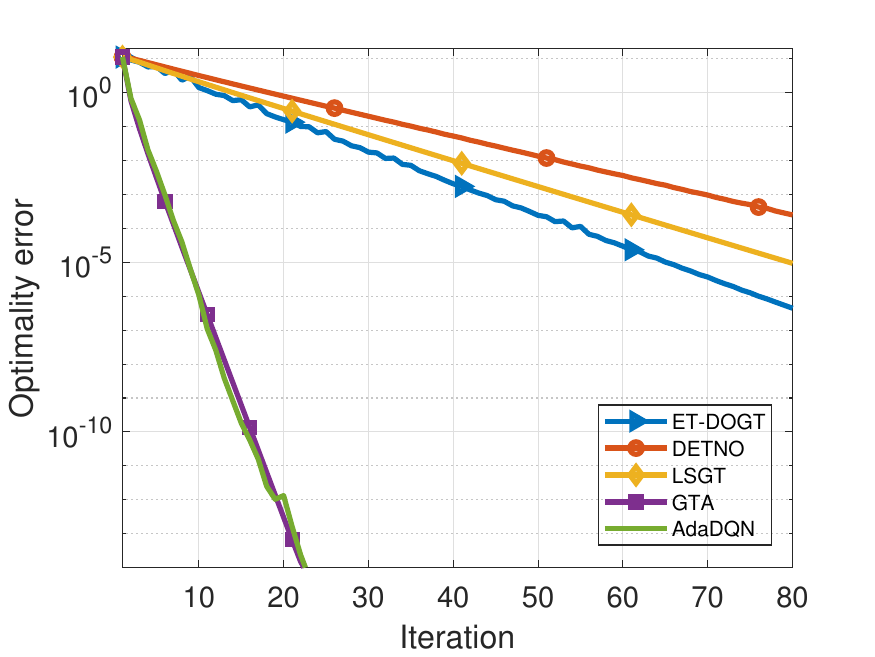}%
		\label{cond10_1}}
	\subfloat[]{\includegraphics[width=1.3in]{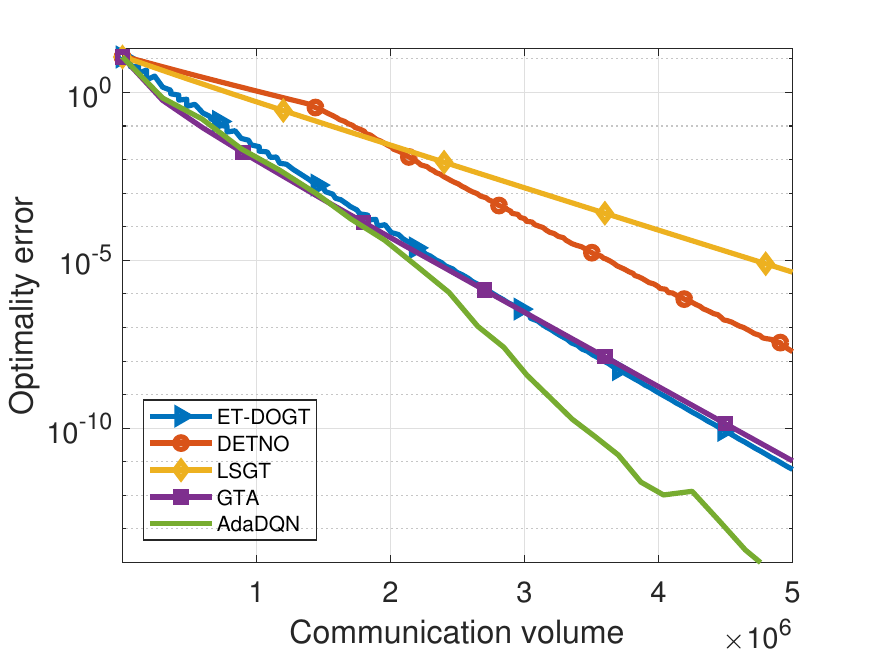}%
		\label{cond10_2}}
	\subfloat[]{\includegraphics[width=1.3in]{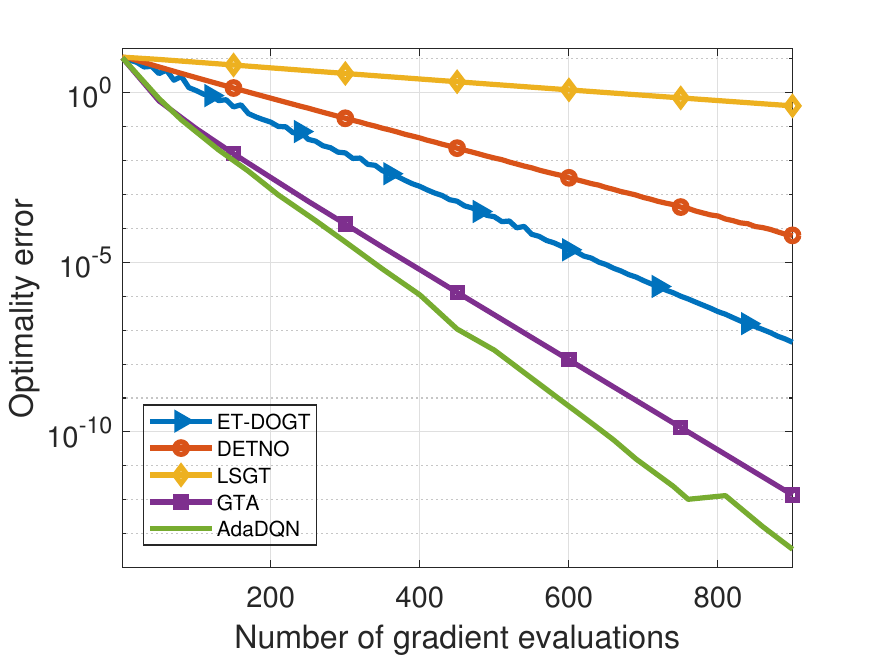}%
		\label{cond10_3}}
	\subfloat[]{\includegraphics[width=1.3in]{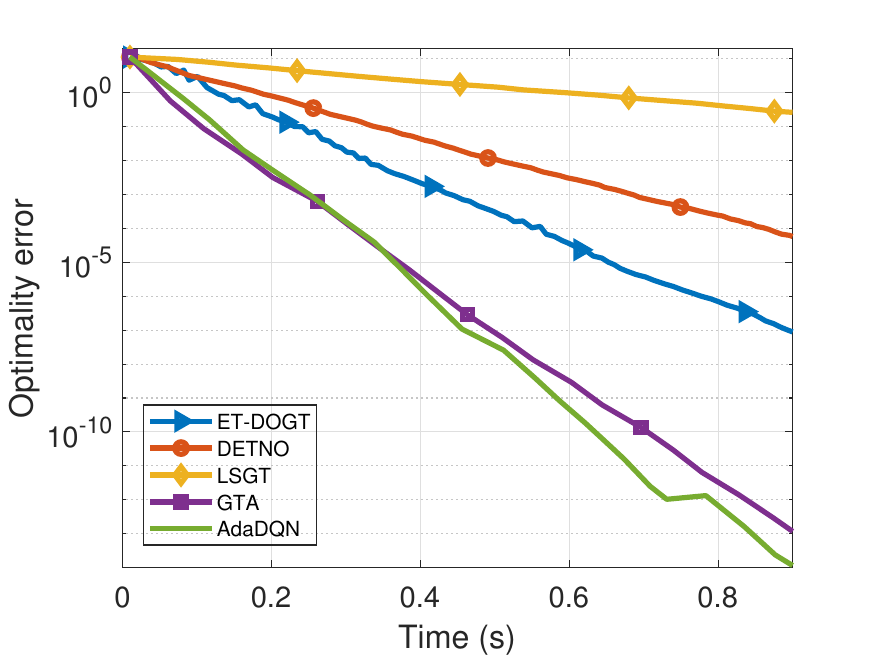}%
		\label{cond10_4}}
	\caption{Optimality error of comparison algorithms for minimizing the nonconvex linear regression problem \eqref{linear_problem} with $a_p=10$.}
	\label{cond10}
\end{figure*}

\begin{figure*}[!t]
	\centering
	\subfloat[]{\includegraphics[width=1.3in]{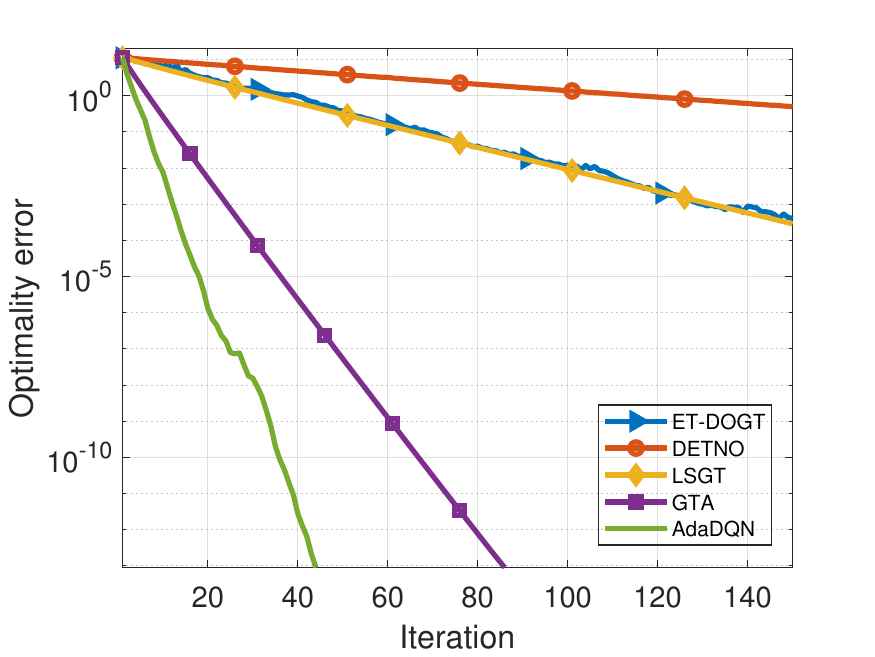}%
		\label{cond1000_1}}
	\subfloat[]{\includegraphics[width=1.3in]{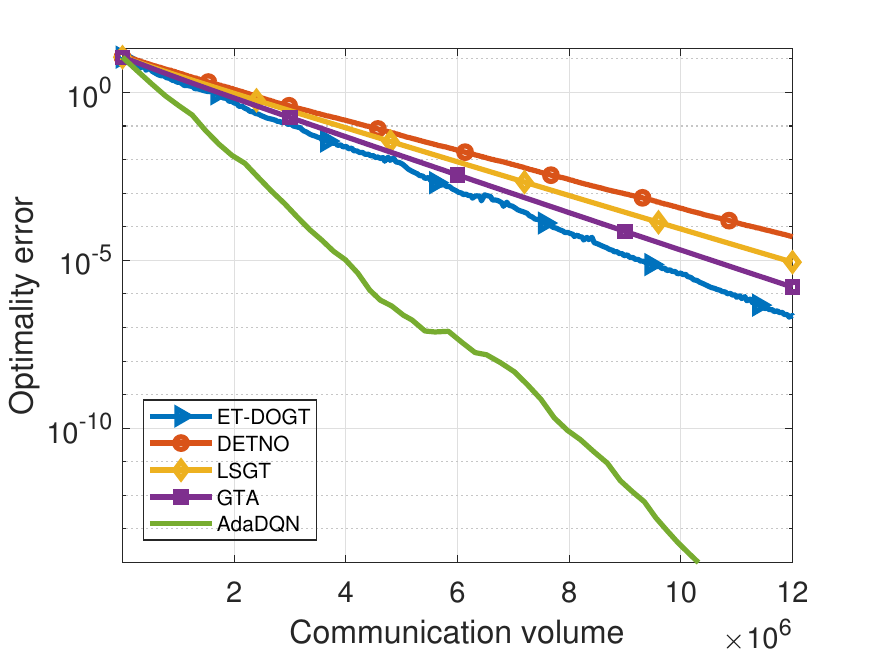}%
		\label{cond1000_2}}
	\subfloat[]{\includegraphics[width=1.3in]{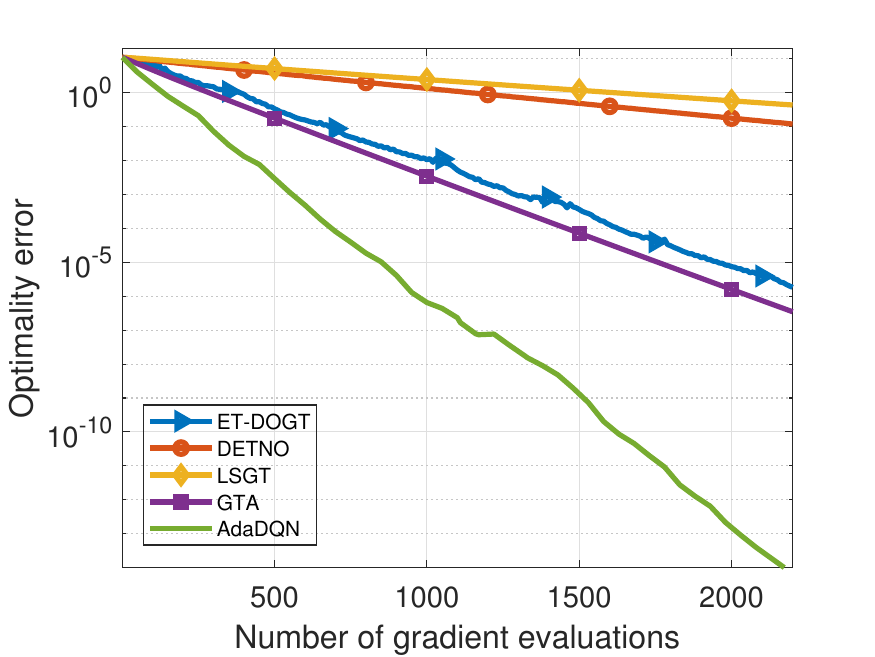}%
		\label{cond1000_3}}
	\subfloat[]{\includegraphics[width=1.3in]{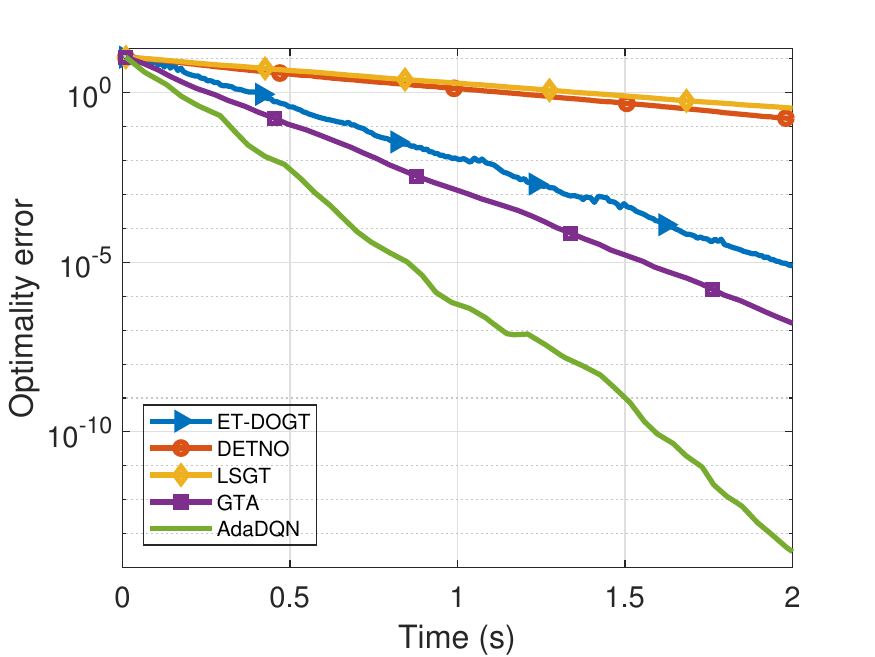}%
		\label{cond1000_4}}
	\caption{Optimality error of comparison algorithms for minimizing the nonconvex linear regression problem \eqref{linear_problem} with $a_p=1000$.}
	\label{cond1000}
\end{figure*}

Then, we consider the nonconvex logistic regression problem \eqref{noncovex_logistic_problem}, using the four datasets given by Table \ref{table2}.
For datasets \textbf{mushroom}(\textbf{ijcnn1};\textbf{w8a};\textbf{a9a}), we set 
\begin{itemize}
	\item $\alpha=0.09(0.1;0.12;0.09)$ and $\eta=1.3(1.6;1.4;1.5)$ in ET-DOGT;
	\item $\alpha=0.06(0.1;0.08;0.08)$, $\beta=0.11(0.1;0.11;0.1)$, $\gamma=0.9$, and $S$=0.05 (0.001; 0.003; 0.01) in DETNO;
	\item $E=5$ and $\gamma=0.015(0.02;0.019;0.018)$ in LSGT;
	\item $n_g=5$, $n_c=5$, and $\alpha=0.15(0.18;0.14;0.15)$ in GTA.
\end{itemize}
As shown in Figs \ref{ET_iter}-\ref{ET_time}, compared to alternative methods on all selected datasets, AdaDQN consistently exhibits superior performance in terms of iterations, communication volume, gradient evaluations, and CPU time.

\begin{figure*}[!t]
	\centering
	\subfloat[mushroom]{\includegraphics[width=1.3in]{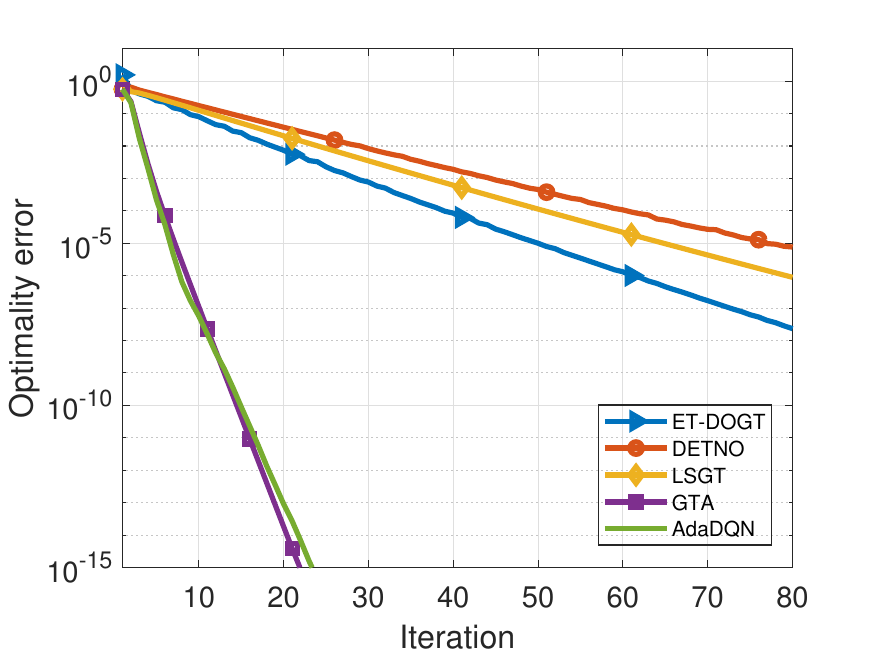}%
		\label{ET_mush_i}}
	\subfloat[ijcnn1]{\includegraphics[width=1.3in]{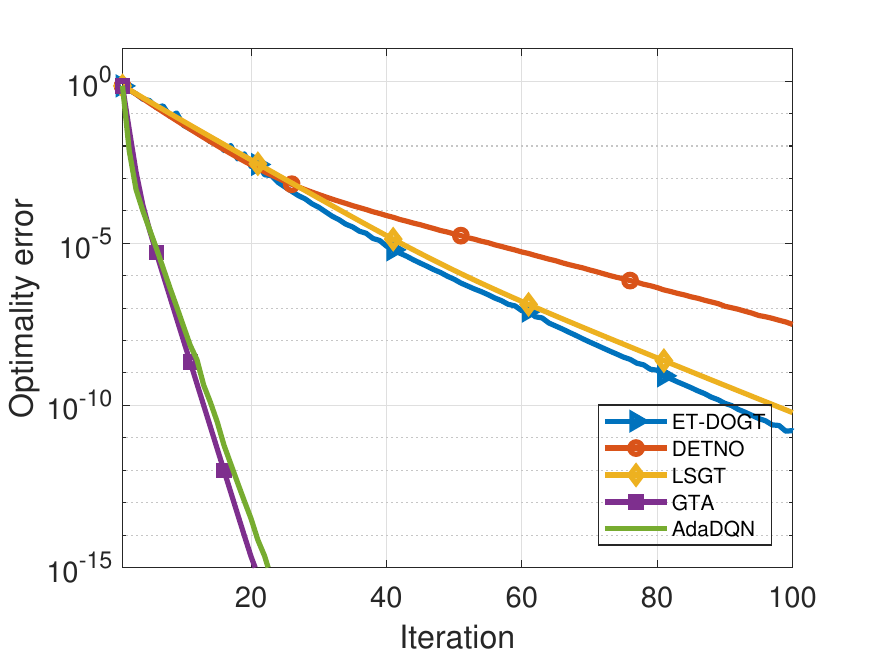}%
		\label{ET_ijcnn_i}}
	\subfloat[w8a]{\includegraphics[width=1.3in]{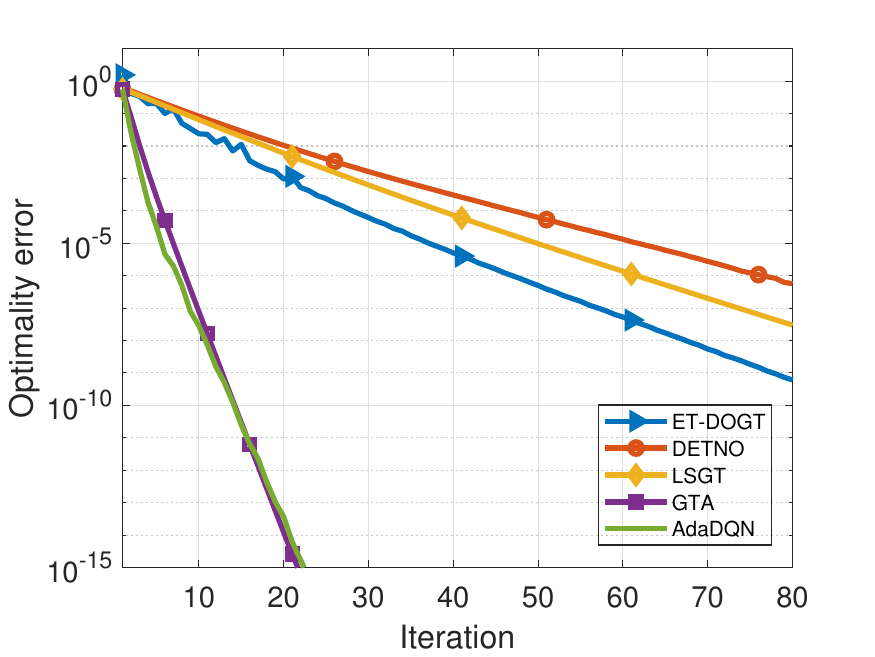}%
		\label{ET_w8a_i}}
	\subfloat[a9a]{\includegraphics[width=1.3in]{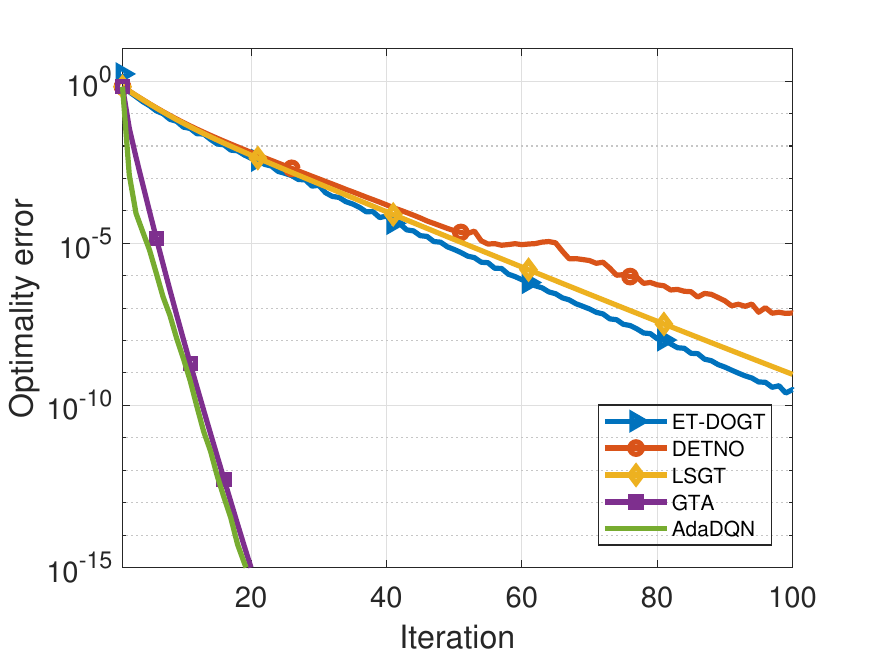}%
		\label{ET_a9a_i}}
	\caption{Optimality error of comparison algorithms for minimizing the nonconvex logistic regression problem \eqref{noncovex_logistic_problem} on different datasets w.r.t. iteration number.}
	\label{ET_iter}
\end{figure*}

\begin{figure*}[!t]
	\centering
	\subfloat[mushroom]{\includegraphics[width=1.3in]{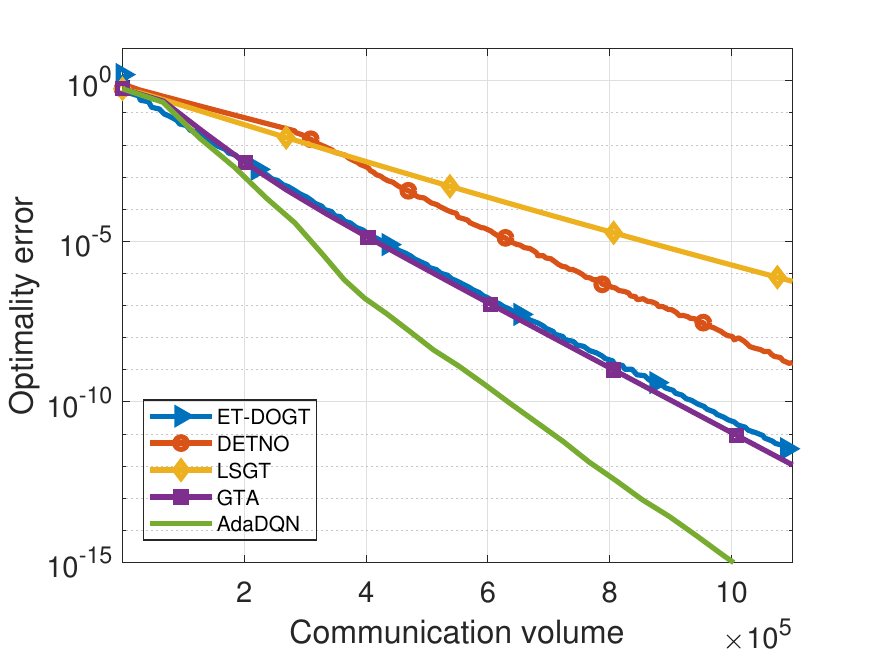}%
		\label{ET_mush_c}}
	\subfloat[ijcnn1]{\includegraphics[width=1.3in]{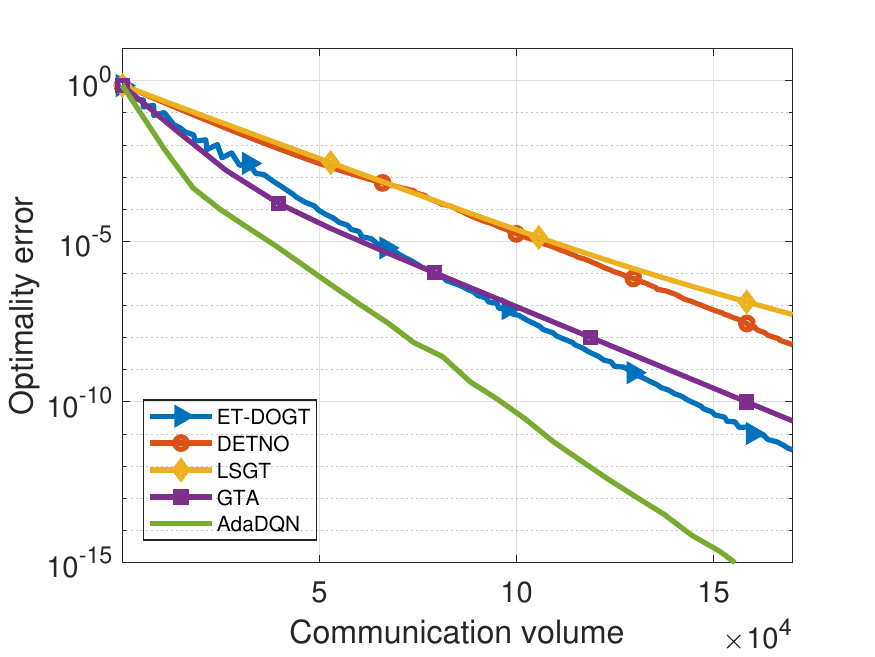}%
		\label{ET_ijcnn_c}}
	\subfloat[w8a]{\includegraphics[width=1.3in]{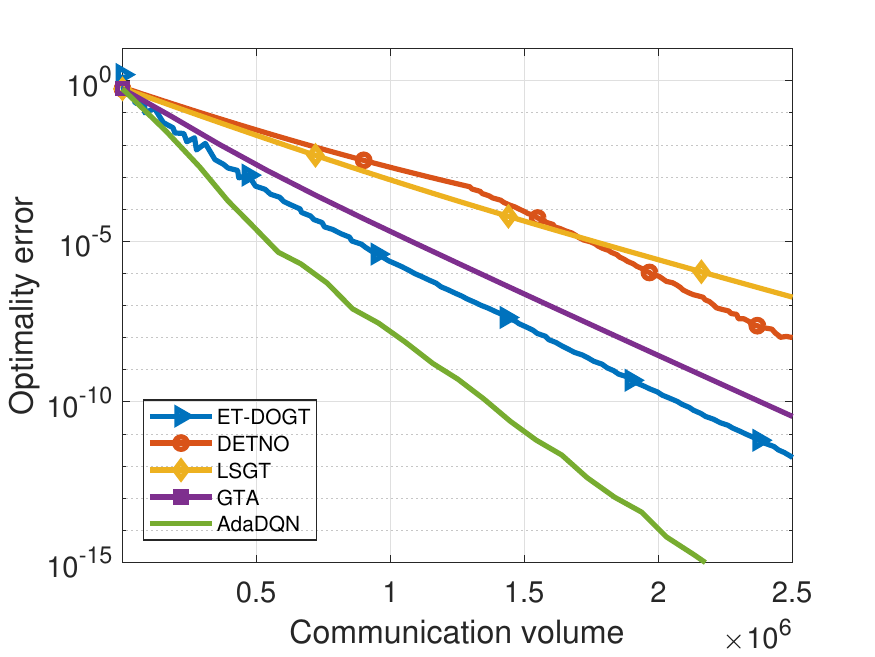}%
		\label{ET_w8a_c}}
	\subfloat[a9a]{\includegraphics[width=1.3in]{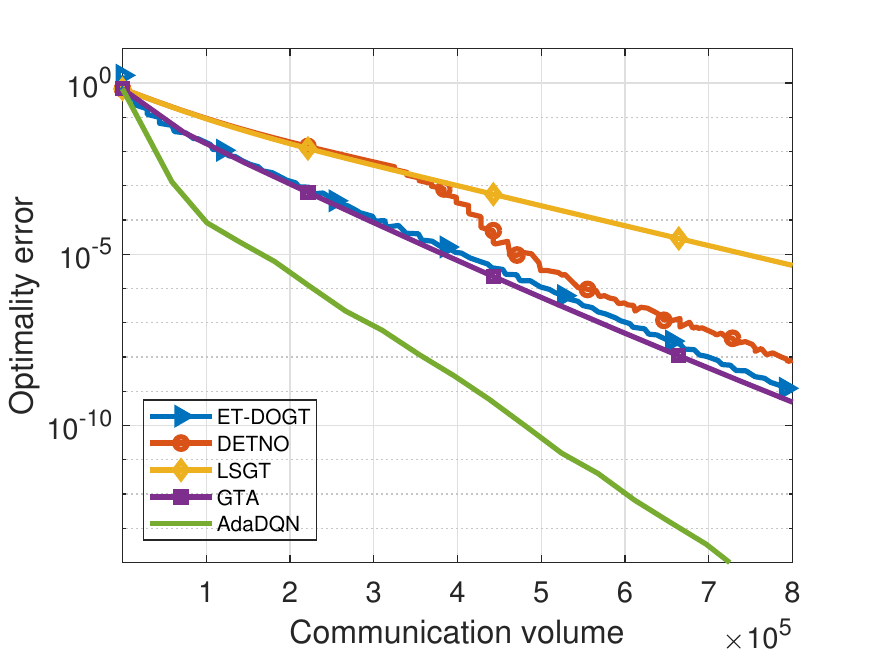}%
		\label{ET_a9a_c}}
	\caption{Optimality error of comparison algorithms for minimizing the nonconvex logistic regression problem \eqref{noncovex_logistic_problem} on different datasets w.r.t. communication volume.}
	\label{ET_com}
\end{figure*}

\begin{figure*}[!t]
	\centering
	\subfloat[mushroom]{\includegraphics[width=1.3in]{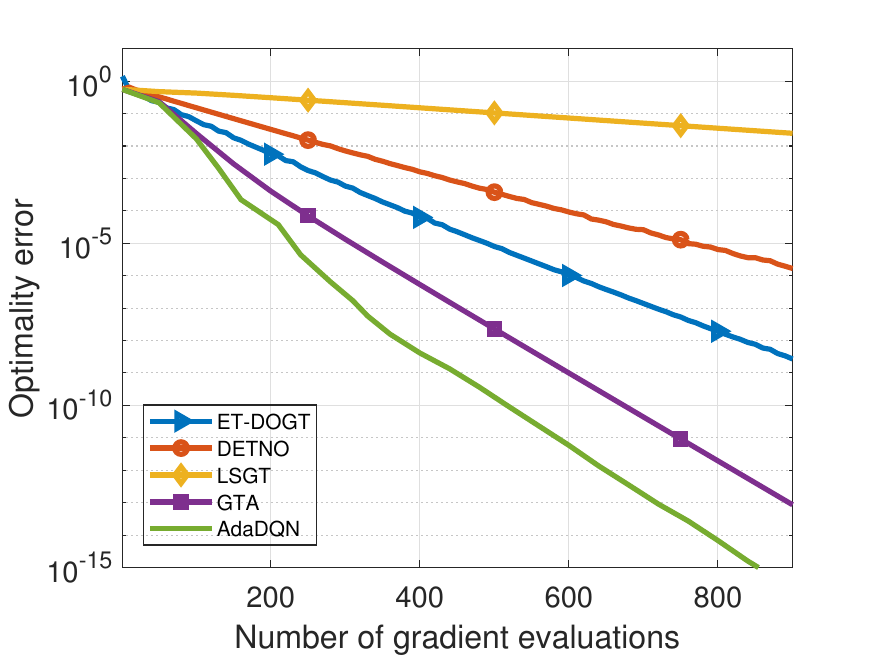}%
		\label{ET_mush_g}}
	\subfloat[ijcnn1]{\includegraphics[width=1.3in]{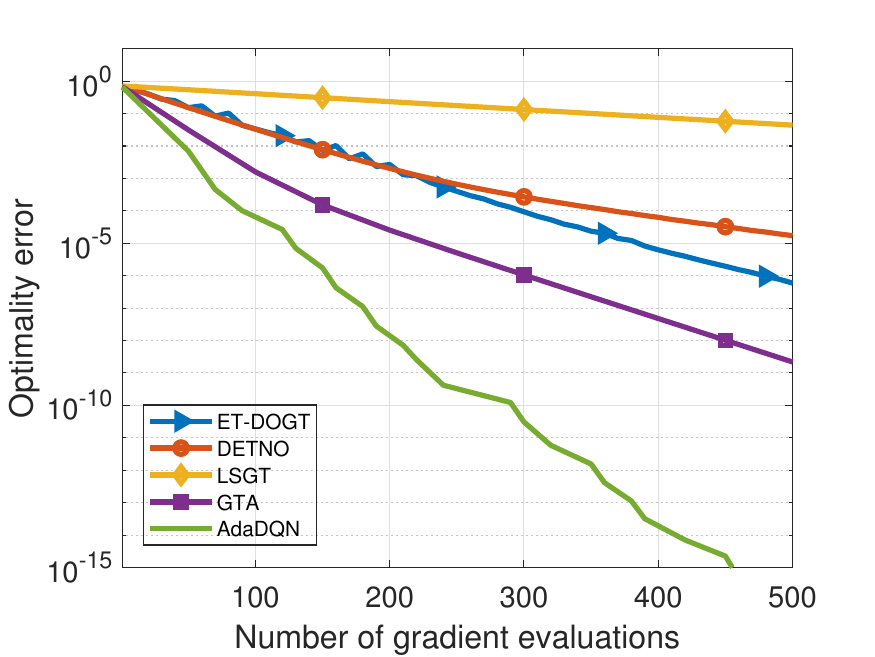}%
		\label{ET_ijcnn_g}}
	\subfloat[w8a]{\includegraphics[width=1.3in]{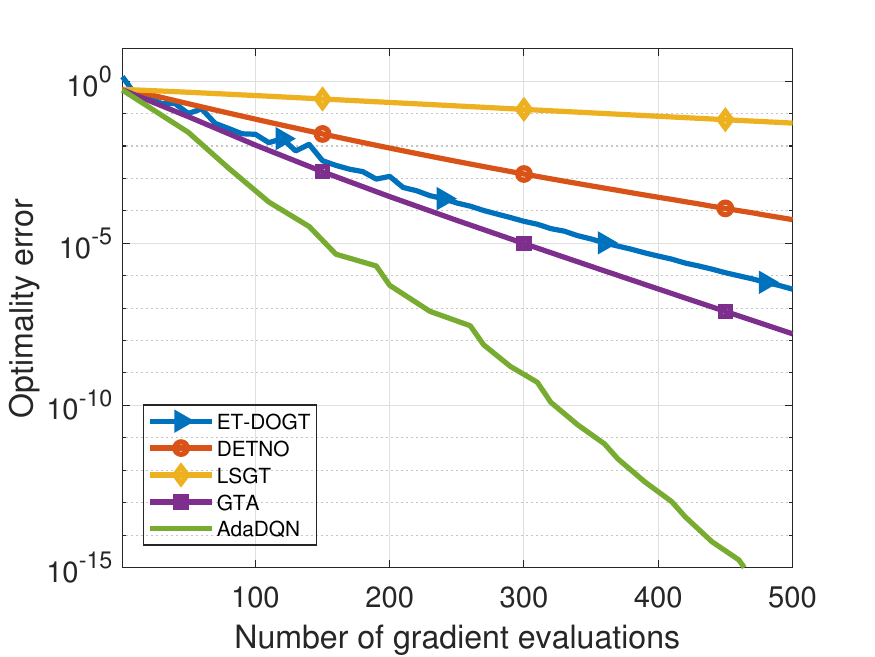}%
		\label{ET_w8a_g}}
	\subfloat[a9a]{\includegraphics[width=1.3in]{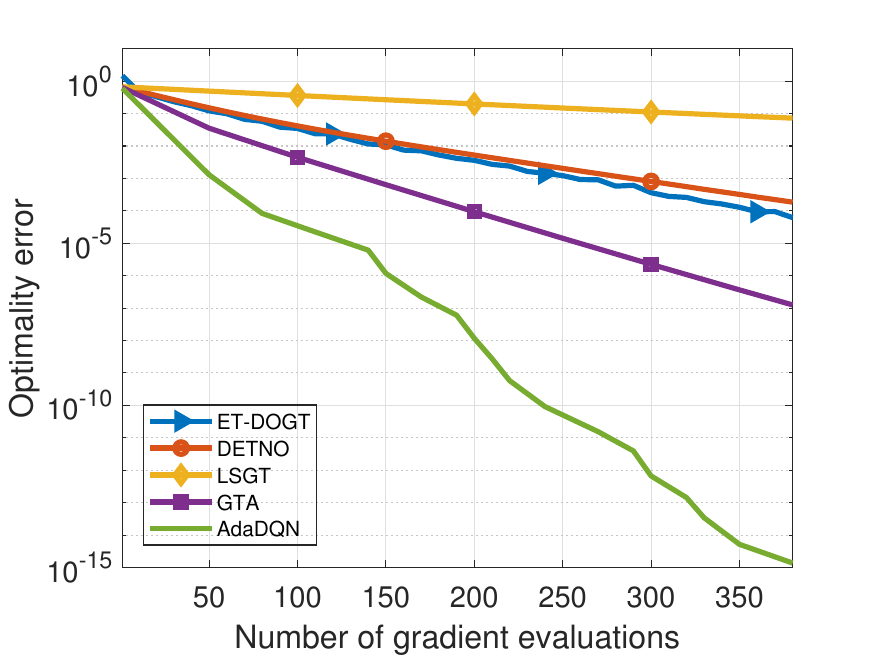}%
		\label{ET_a9a_g}}
	\caption{Optimality error of comparison algorithms for minimizing the nonconvex logistic regression problem \eqref{noncovex_logistic_problem} on different datasets w.r.t. gradient evaluation number.}
	\label{ET_grad}
\end{figure*}

\begin{figure*}[!t]
	\centering
	\subfloat[mushroom]{\includegraphics[width=1.3in]{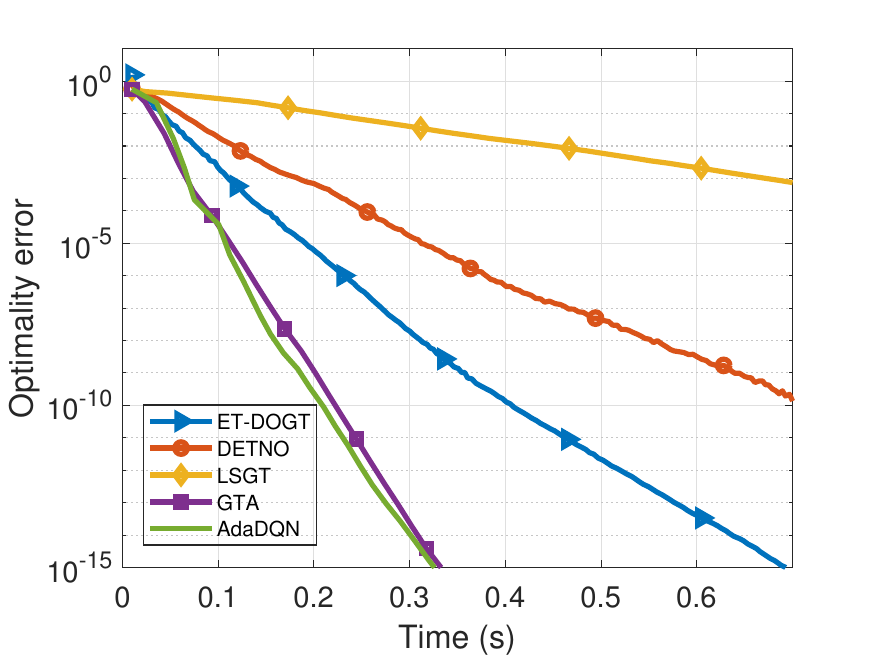}%
		\label{ET_mush_t}}
	\subfloat[ijcnn1]{\includegraphics[width=1.3in]{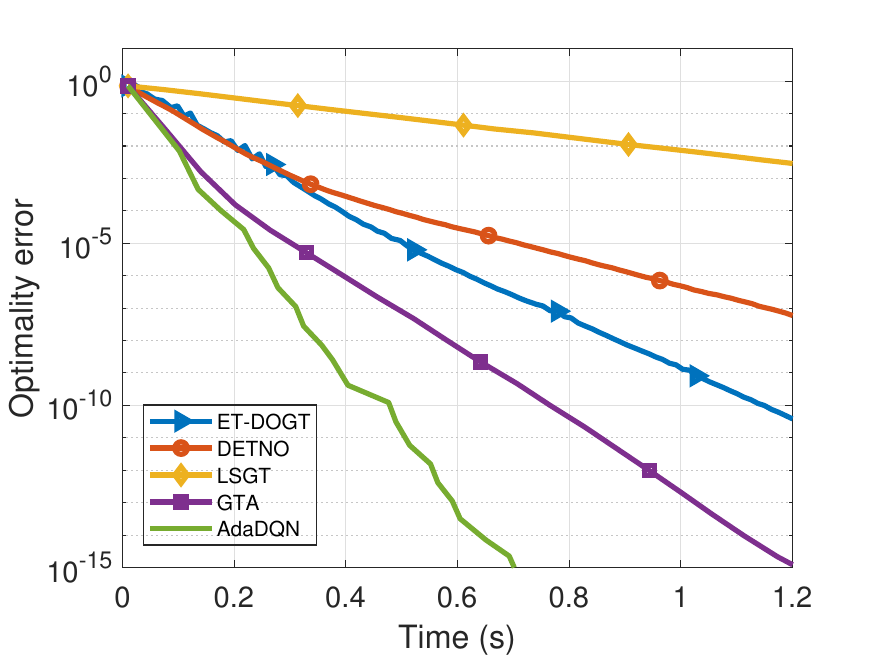}%
		\label{ET_ijcnn_t}}
	\subfloat[w8a]{\includegraphics[width=1.3in]{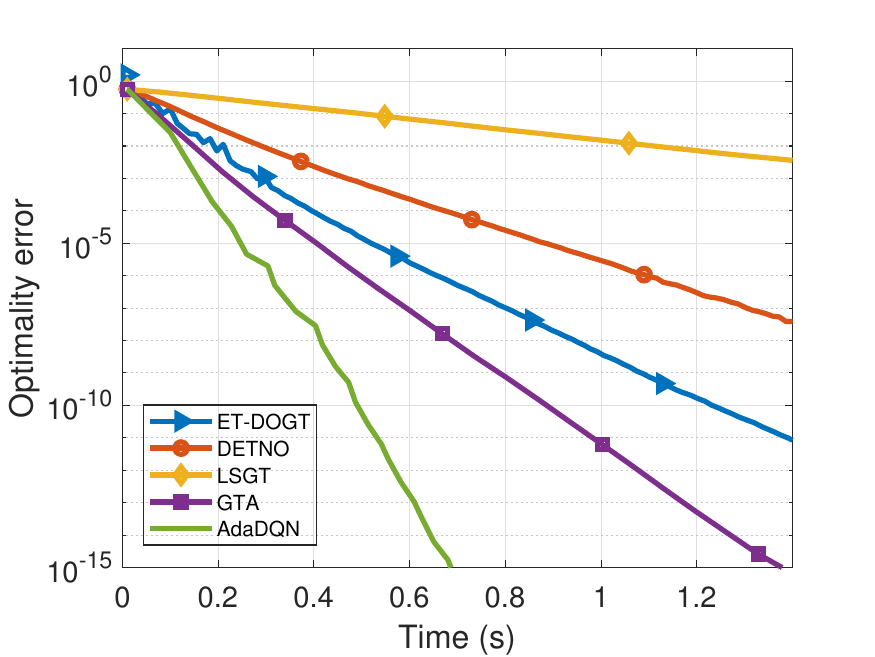}%
		\label{ET_w8a_t}}
	\subfloat[a9a]{\includegraphics[width=1.3in]{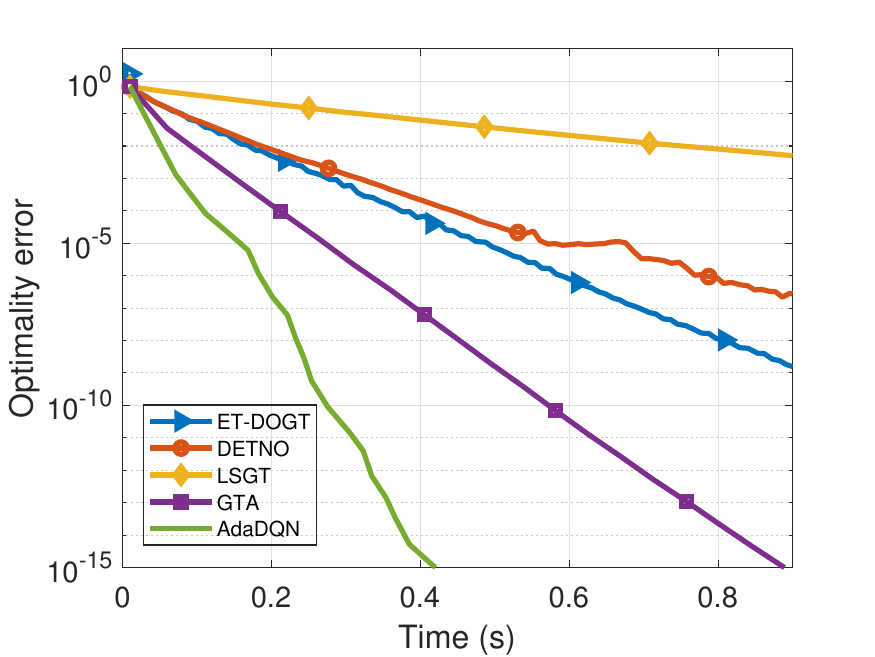}%
		\label{ET_a9a_t}}
	\caption{Optimality error of comparison algorithms for minimizing the nonconvex logistic regression problem \eqref{noncovex_logistic_problem} on different datasets w.r.t. CPU time.}
	\label{ET_time}
\end{figure*}

\clearpage
We emphasize that the proposed adaptive rules are driven by algorithmic states and do not explicitly model system-level heterogeneity in processor speed, bandwidth, communication delay, or node energy consumption. Accordingly, the experiments evaluate reductions in algorithmic computation and communication costs rather than performance under heterogeneous hardware or network conditions. Extending the method and its evaluation to such settings is left for future work.

\section{Conclusions}

In this paper, we investigate the computation-communication tradeoff in decentralized nonconvex optimization. We first introduce the Robust Inexact Algorithm (RIA) framework from a surrogate-based perspective, which provides a common representation for the fixed-local-update gradient-tracking schemes verified in Section 3 and shows that the $\mathcal{O}(1/K_g)$ sufficient stepsize scaling is worst-case tight for the unscaled fixed-local-update scheme. To circumvent this limitation, we propose the Adaptive Decentralized Quasi-Newton (AdaDQN) method. By integrating a safeguarded consensus-aware termination criterion, an event-triggered communication protocol, and a scalable memoryless BFGS update, AdaDQN achieves double-level adaptivity through adaptive local updates and broadcasts.

Assuming the objective function is bounded from below and has Lipschitz gradients, we establish that the first-order stationarity residual of AdaDQN converges to zero and the best-iterate stationarity measure admits an $\mathcal{O}(1/T)$ rate. We also prove that the theoretically required stepsize is not inversely proportional to the maximum number of local updates. The resulting gradient-complexity bounds are $\mathcal{O}(nK_g/\delta)$ for fixed local updates and $\mathcal{O}(n/\delta+n\alpha\tilde\varepsilon^{-2})$ for AdaDQN. Ablation experiments and comparison experiments
on nonconvex machine learning tasks with various datasets demonstrate that AdaDQN achieves a superior computation-communication tradeoff and significantly outperforms state-of-the-art decentralized optimization methods, including ET-DOGT\cite{liu2023event}, DETNO \cite{gao2024distributed}, LSGT \cite{ge2023gradient}, and GTA \cite{berahas2024balancing}.

\section*{Acknowledgments}
We thank Prof. Hongchao Zhang from Louisiana State University for insightful comments and constructive suggestions during the preparation of this work.


\bibliographystyle{IEEEtran}
\bibliography{refs}

\appendix

\section{Analytical tools}

Two analytical tools are provided below.

\begin{lemma}\label{young}
	(Young's inequality) For any two vectors $\m{v}_1,\m{v}_2\in \mathbb{R}^p$, and $\eta>0$,
	\begin{align*}
		2\m{v}_1\tr\m{v}_2 \leq& \eta \Vert\m{v}_1\Vert^2+ \frac{1}{\eta}\Vert\m{v}_2\Vert^2,\\
		\Vert\m{v}_1+\m{v}_2\Vert^2 \leq& (1+\eta) \Vert\m{v}_1\Vert^2+ \left(1+\frac{1}{\eta}\right)\Vert\m{v}_2\Vert^2.
	\end{align*}
\end{lemma}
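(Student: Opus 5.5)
The first inequality comes from the nonnegativity of a suitably scaled squared norm, and the second follows from the first by expanding a square. Both are elementary, so the plan is mostly about picking the right auxiliary vector.

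\textbf{First inequality.} For $\eta>0$, start from
\[
0\le \Big\Vert \sqrt{\eta}\,\m{v}_1-\tfrac{1}{\sqrt{\eta}}\m{v}_2\Big\Vert^2 .
\]
Expanding the right-hand side with the bilinearity of the Euclidean inner product gives
\[
\eta\Vert\m{v}_1\Vert^2-2\m{v}_1\tr\m{v}_2+\tfrac{1}{\eta}\Vert\m{v}_2\Vert^2 \ge 0 .
\]
Rearranging this is exactly the claimed bound $2\m{v}_1\tr\m{v}_2\le \eta\Vert\m{v}_1\Vert^2+\frac{1}{\eta}\Vert\m{v}_2\Vert^2$. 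The only choice to make is the scaling $\sqrt{\eta}$ versus $1/\sqrt{\eta}$; it is fixed by requiring that the cross term be exactly $-2\m{v}_1\tr\m{v}_2$ while the two square terms carry the weights $\eta$ and $1/\eta$.

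\textbf{Second inequality.} Expand
\[
\Vert\m{v}_1+\m{v}_2\Vert^2=\Vert\m{v}_1\Vert^2+2\m{v}_1\tr\m{v}_2+\Vert\m{v}_2\Vert^2 .
\]
Bound the cross term using the first inequality with the same parameter $\eta$. Collecting coefficients then gives $(1+\eta)\Vert\m{v}_1\Vert^2+(1+1/\eta)\Vert\m{v}_2\Vert^2$, as claimed.

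\textbf{Main obstacle.} There is no genuine obstacle. The one point to check is that $\eta>0$, so that $\sqrt{\eta}$ and $1/\eta$ are well defined. Equality holds exactly when $\eta\m{v}_1=\m{v}_2$, which confirms the constants cannot be improved. This also explains why the earlier lemmas can tune $\eta,\tau,b,d$ freely.
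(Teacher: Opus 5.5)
Your proof is correct. Expanding $\Vert\sqrt{\eta}\,\m{v}_1-\m{v}_2/\sqrt{\eta}\Vert^2\ge 0$ gives the first bound, and substituting it into the expansion of $\Vert\m{v}_1+\m{v}_2\Vert^2$ gives the second; the paper states this lemma in the appendix without proof as a standard tool, and your completing-the-square argument is the standard derivation it implicitly relies on.
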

\begin{lemma}\label{important}
	(Jensen's inequality) For a set of vectors $\{\m{v}_i\}_{i=1}^n \subset \mathbb{R}^p$,
	$$\left\Vert\frac{1}{n}\sum_{i=1}^n\m{v}_i\right\Vert^2\leq\frac{1}{n}\sum_{i=1}^n\Vert\m{v}_i\Vert^2.$$ 
\end{lemma}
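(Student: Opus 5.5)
We prove the inequality $\left\Vert\frac{1}{n}\sum_{i=1}^n\m{v}_i\right\Vert^2\leq\frac{1}{n}\sum_{i=1}^n\Vert\m{v}_i\Vert^2$ by a direct variance decomposition around the mean.

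Let $\bar{\m{v}}=\frac{1}{n}\sum_{i=1}^n\m{v}_i$. First expand each squared deviation:
$$\Vert\m{v}_i-\bar{\m{v}}\Vert^2=\Vert\m{v}_i\Vert^2-2\m{v}_i\tr\bar{\m{v}}+\Vert\bar{\m{v}}\Vert^2.$$
Next sum over $i$ and divide by $n$. Since $\frac1n\sum_i\m{v}_i\tr\bar{\m{v}}=\Vert\bar{\m{v}}\Vert^2$, this gives the identity
$$\frac{1}{n}\sum_{i=1}^n\Vert\m{v}_i\Vert^2-\Vert\bar{\m{v}}\Vert^2=\frac{1}{n}\sum_{i=1}^n\Vert\m{v}_i-\bar{\m{v}}\Vert^2.$$
The right-hand side is a sum of squared norms, so it is nonnegative, and the claim follows at once.

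As a cross-check, one can instead apply Cauchy--Schwarz componentwise, $\big(\sum_i v_{i,k}\big)^2\le n\sum_i v_{i,k}^2$, and sum over the coordinates $k$. Alternatively, it follows from convexity of $\Vert\cdot\Vert^2$.

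No step is a real obstacle. The only care needed is the cross-term bookkeeping in the summation, which is where $\frac1n\sum_i\m{v}_i\tr\bar{\m{v}}=\Vert\bar{\m{v}}\Vert^2$ is used. The identity also shows that equality holds exactly when all $\m{v}_i$ coincide.
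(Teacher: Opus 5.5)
Your proof is correct. The paper gives no proof: it lists the statement in the appendix as a standard tool and calls it Jensen's inequality, i.e.\ convexity of $\Vert\cdot\Vert^2$ with uniform weights $1/n$, which is the route you mention only in passing.

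Your main argument, the variance decomposition, is more elementary and more informative. It gives the exact identity $\frac{1}{n}\sum_{i=1}^n\Vert\m{v}_i\Vert^2-\Vert\bar{\m{v}}\Vert^2=\frac{1}{n}\sum_{i=1}^n\Vert\m{v}_i-\bar{\m{v}}\Vert^2$, and hence both the equality case and the exact size of the gap. In the paper's stacked notation this identity reads $\Vert\m{v}\Vert^2=\Vert\m{M}\m{v}\Vert^2+\Vert\m{v}-\m{M}\m{v}\Vert^2$, which is the Pythagorean relation for the orthogonal projection $\m{M}$. So the gap is exactly $1/n$ times the consensus error that the paper's potential functions track.

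The Jensen route applies to any convex function and any convex weights, but it yields only the inequality. Your identity instead relies on the inner-product structure of the squared Euclidean norm.

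Wherever the paper invokes this lemma, it actually uses the unnormalized form $\Vert\sum_{i=1}^m\m{a}_i\Vert^2\leq m\sum_{i=1}^m\Vert\m{a}_i\Vert^2$ with $m=2$ or $3$. An example is the constants $8$ and $2$ in the bound on $\Vert\m{x}^{t+1}-\m{x}^t\Vert^2$. Your Cauchy--Schwarz cross-check produces that form directly, and it also follows from the stated inequality by multiplying through by $m^2$.
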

\end{document}